\documentclass[10pt]{amsart}
\linespread{1}
\setlength{\textheight}{8.5in}
\setlength{\oddsidemargin}{0.7cm}
\setlength{\evensidemargin}{0.7cm}
\setlength{\textwidth}{6.5in}

\usepackage{amscd}
\usepackage{amssymb}
\usepackage{amsmath}
\usepackage{amsfonts}
\usepackage{enumerate}
\usepackage[all,cmtip]{xy}
\usepackage{tikz-cd}
\usepackage{hyperref}

\newtheorem{thm}{Theorem}[section]
\newtheorem{prop}[thm]{Proposition}
\newtheorem{lem}[thm]{Lemma}
\newtheorem{cor}[thm]{Corollary}

\theoremstyle{definition}
\newtheorem{Def}[thm]{Definition}
\newtheorem{exmp}[thm]{Example}

\newtheorem{rem}[thm]{Remark}

\parskip=5pt

\newcommand{\threerightarrows}{%
\mathrel{\vcenter{\hbox{$\rightrightarrows$}\nointerlineskip\hbox{$\rightarrow$}}}
}
\newcommand{\fourrightarrows}{%
\mathrel{\vcenter{\hbox{$\threerightarrows$}\nointerlineskip\hbox{$\rightarrow$}}}
}
\keywords{topological coHochschild homology, cotensor product, Morita-Takeuchi invariance}

\numberwithin{equation}{section}

\title{on topological coHochschild homology and  cotensor products}

\author[J. Zha]{Jiaxi Zha}
\address{Department of Mathematics, Nankai University, No.94 Weijin Road, Tianjin 300071, P. R. China}
\email{1093913699@qq.com}

\begin{document}

\begin{abstract}
	In this work, we first study the cotensor product of comodules in the $\infty$-category $\mathrm{Mod}_R$ for a connective $\mathbb{E}_{\infty}$-ring spectrum $R$. We then apply these results to analyze higher coalgebra structures of topological coHochschild homology (coTHH) and establish its Morita-Takeuchi invariance, which are precisely dual to the corresponding properties of topological Hochschild homology.
\end{abstract}

\maketitle

\section{Introduction}
Dual to the notion of Hochschild homology, coHochschild homology was first introduced by Doi in \cite{Doi81} as an invariant of coalgebras. Building upon this foundation, Hess-Shipley introduced topological coHochschild homology for coalgebras in the category of spectra in \cite{HS21}, which extended Doi's definition to the stable homotopy theory. Furthermore, they proved that for any  EMSS-good space $X$, there is an equivalence
\[\mathrm{coTHH}(\Sigma_+^{\infty} X)\simeq \Sigma_+^{\infty}LX,\]
 extending Malkiewich's result for simply connective spaces in \cite{Mal17}. Based on this equivalence, new computational tools for the homology of the free loop space of $X$ have been developed. We refer to \cite{BGS22} and \cite{Kla22} for concrete applications. An analogous formula for topological Hochschild homology can be found in \cite{Go85,NS18}, and its categorical proof through higher categorical traces is given in \cite {CCRY22, HSS17}. While we intend to apply the methods of \cite{CCRY22} to study properties of coTHH, this approach lies beyond the scope of the current work and will be addressed in future research. Another motivation comes from the coalgebra itself containing substantial information regarding topological spaces, with its invariant providing measurable data. For example, using the category of comodules over the coalgebra $\Sigma_+^{\infty}X$, Hess and Shipley reconstructed the Waldhausen K-theory of a topological space $X$  \cite{HS16}.  

Most of aforementioned work on coTHH operates within specific model categories of spectra, while an interesting result \cite{PS18} of P\'{e}roux and Shipley shows that all coalgebra spectra are automatically cocommutative in certain model categories of spectra. The proof depends on the existence of strict monoidal structures of these model categories, a condition that is not satisfiable in the $\infty$-category setting. This limitation motivates our adoption of $\infty$-categorical language. Specifically, we study coalgebras and comodules in the $\infty$-category $\mathrm{Mod}_R$ where $R$ is an $\mathbb{E}_{\infty}$-ring spectrum. In fact, a recent work \cite{BP23} of Bayindir and P\'{e}roux established a general definition of coTHH in arbitrary symmetric monoidal $\infty$-categories and investigated the relationship between topological Hochschild homology and coTHH via Spanier-Whitehead duality. 

Consequently, a fundamental question arises: Does coTHH admit dualized analogues of properties satisfied by topological Hochschild homology? The primary objective of this work is to study the relationship between coTHH and the cotensor product, which constitutes the dual analogue of the fundamental connection between topological Hochschild homology and the tensor product. A fundamental theorem to compute topological Hochschild homology states that
    \[\mathrm{THH}(M;A)\simeq M\otimes_{A\otimes A^{\mathrm{op}}}A.\]
    Our first main result is the dual analogue for coTHH. While this should be familiar to experts, we include it in the paper for completeness. Further similar computational formulas will be established in Section \ref{sec: app}.
 
\begin{thm}\label{th1}
	Let $\mathcal{C}$ be a symmetric monoidal $\infty$-category which admits small limits, and let $C$ be a coalgebra in $\mathcal{C}$ and $M$ be a $C$-bicomodule. Then there is an equivalence
\begin{equation*}
	\mathrm{coTHH}^{\mathcal{C}}(M;C) \simeq M \ \square_{C\otimes C^{\mathrm{op}}} \ C.
\end{equation*}	
\end{thm}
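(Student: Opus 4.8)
The plan is to realize both sides as totalizations of cobar-type cosimplicial objects and to compare them through a single bicosimplicial object, running the argument formally dual to the classical identification of the cyclic bar construction $B^{\mathrm{cyc}}(M,A)$ with the two-sided bar construction $B(M,A\otimes A^{\mathrm{op}},A)$. By definition $\mathrm{coTHH}^{\mathcal{C}}(M;C)$ is the totalization $\mathrm{Tot}\big(M\otimes C^{\otimes\bullet}\big)$ of the cyclic cobar construction, whose cofaces come from the comultiplication of $C$ together with the left and right coactions on $M$, and whose codegeneracies come from the counit of $C$. On the other side, writing $C^e:=C\otimes C^{\mathrm{op}}$, a $C$-bicomodule is the same datum as a $C^e$-comodule, so $M\,\square_{C^e}\,C$ is the totalization $\mathrm{Tot}\big(M\otimes (C^e)^{\otimes\bullet}\otimes C\big)$ of the two-sided cobar construction $\mathrm{Cobar}^\bullet(M,C^e,C)$.

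First I would record the two inputs that make the comparison go through. The first is the \emph{cotensor with a cofree comodule} formula: for a $C^e$-comodule $N$ and any object $W\in\mathcal{C}$ one has $N\,\square_{C^e}\,(C^e\otimes W)\simeq N\otimes W$; equivalently, reorganizing $C^e\otimes W\simeq C\otimes W\otimes C$ by the symmetry of $\mathcal{C}$ and letting the outer factors carry the bicomodule structure, $N\,\square_{C^e}\,(C\otimes W\otimes C)\simeq N\otimes W$. This is dual to $P\otimes_{A^e}(A^e\otimes W)\simeq P\otimes W$ and follows from the fact that $\mathrm{Cobar}^\bullet(N,C^e,C^e)$ is a split coaugmented cosimplicial object over $N\,\square_{C^e}\,C^e\simeq N$. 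The second input is that the two-sided cobar construction $\mathrm{Cobar}^\bullet(C,C,C)$, with $\mathrm{Cobar}^n(C,C,C)=C\otimes C^{\otimes n}\otimes C$, is a coaugmented cosimplicial object over $C\,\square_C\,C\simeq C$ which is \emph{split} when viewed in $\mathcal{C}$ (the extra codegeneracy being induced by the counit on an outer factor); in particular its totalization is $C$, and this persists after applying any functor, since functors preserve split coaugmented cosimplicial objects. Both facts should already be available from the study of cotensor products in the earlier sections.

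Now I would form the bicosimplicial object $Y^{\bullet,\bullet}$ obtained by applying the functor $\mathrm{Cobar}^\bullet(M,C^e,-)$ levelwise to the cosimplicial $C^e$-comodule $\mathrm{Cobar}^\bullet(C,C,C)$, so that $Y^{p,q}=M\otimes (C^e)^{\otimes p}\otimes C\otimes C^{\otimes q}\otimes C$. Totalizing in the $p$-direction first and applying the cofree-cotensor formula identifies $\mathrm{Tot}_p\,Y^{p,\bullet}$ with the cyclic cobar construction $M\otimes C^{\otimes\bullet}$, whence $\mathrm{Tot}_q\mathrm{Tot}_p\,Y\simeq\mathrm{coTHH}^{\mathcal{C}}(M;C)$. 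Totalizing in the $q$-direction first, the cosimplicial object $\mathrm{Cobar}^\bullet(C,C,C)$ is split over $C$ in $\mathcal{C}$, so the functor $M\otimes(C^e)^{\otimes p}\otimes-$ carries it to a split coaugmented cosimplicial object over $M\otimes(C^e)^{\otimes p}\otimes C=\mathrm{Cobar}^p(M,C^e,C)$; hence $\mathrm{Tot}_q\,Y^{p,\bullet}\simeq\mathrm{Cobar}^p(M,C^e,C)$ and $\mathrm{Tot}_p\mathrm{Tot}_q\,Y\simeq M\,\square_{C^e}\,C$. Finally the Fubini theorem for limits in $\mathcal{C}$ (iterated totalizations agree with the totalization over $\Delta\times\Delta$, in either order) gives $\mathrm{Tot}_q\mathrm{Tot}_p\,Y\simeq\mathrm{Tot}_p\mathrm{Tot}_q\,Y$, and the theorem follows.

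The main obstacle I expect is bookkeeping rather than conceptual: identifying $\mathrm{Tot}_p\,Y^{p,\bullet}$ with the cyclic cobar construction \emph{together with all of its cosimplicial structure maps} requires carefully tracking how the symmetry isomorphisms that reorganize $C^e=C\otimes C^{\mathrm{op}}$ and its tensor powers interact with the cofaces and codegeneracies — this is precisely the dual of the slightly fiddly classical verification that $B^{\mathrm{cyc}}(M,A)\cong M\otimes_{A^e}B(A,A,A)$ as simplicial objects. One should also take care that these (bi)cosimplicial objects are constructed as genuine functors out of $\Delta$ (respectively $\Delta\times\Delta$), not merely levelwise, so that the totalizations and the Fubini interchange are meaningful $\infty$-categorically; this coherence should be inherited from the functoriality of the cobar constructions set up in the earlier sections.
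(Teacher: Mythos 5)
Your proposal is correct and is essentially the paper's own argument: the paper also compares the two sides through the bicosimplicial object $M\otimes(C^e)^{\otimes i}\otimes C\otimes C^{\otimes j}\otimes C$, using the splitting of $\Omega^{\bullet}_{\mathcal{C}}(C,C,C)$ (so that tensoring preserves its totalization $C$), the splitting of $\Omega^{\bullet}_{\mathcal{C}}(M,C^e,C^e)$ (your cofree-cotensor formula $M\,\square_{C^e}C^e\simeq M$), and the interchange of the two $\Delta$-indexed limits. The only difference is presentational — the paper writes the Fubini argument as a chain of iterated-limit equivalences rather than packaging it as two totalization orders of an explicit bicosimplicial object.
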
 

\begin{rem}
	Let $\mathbb{K}$ be a commutative ring with global dimension zero. In the model categorical setting, \cite{KP25} established an analogous formula for a simply connective $\mathbb{K}$-coalgebra and a fibrant connective $C$-bicomodule $M$:
	\[\mathrm{coHH}(M;C)\simeq \Omega(M,C\otimes C^{\mathrm{op}},C).\]
\end{rem}

Next, we turn to studying coTHH of $R$-coalgebras, where $R$ is a connective $\mathbb{E}_{\infty}$-ring spectrum.  In general, many dual properties fail for coTHH because the tensor product does not necessarily preserve limits separately in each variable in the $\infty$-category of $R$-modules. To address this, we introduce some connectivity conditions (detailed in Section 3), under which the cotensor product have the following property:

\begin{thm}\label{th: sym}
	The cotensor product restricts to a symmetric monoidal functor:
	\[\mathrm{BiCoMod}(\mathrm{Mod}_R^{\mathrm{cn}})\times_{\mathrm{CoAlg}(\mathrm{Mod}_R)_{R/}^{\geq 2}}\mathrm{BiCoMod}(\mathrm{Mod}_R^{\mathrm{cn}}) \to \mathrm{BiCoMod}(\mathrm{Mod}_R),\]
	where $\mathrm{CoAlg}(\mathrm{Mod}_R)_{R/}^{\geq 2}$ is the $\infty$-category of coaugmented coalgebras with $2$-connective coaugmentation coideals.
\end{thm}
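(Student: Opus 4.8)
\emph{Sketch of proof.} The plan is to realise the cotensor product as a composite $\mathrm{Tot}\circ\mathrm{Cobar}$ in which $\mathrm{Cobar}$ is \emph{strong} symmetric monoidal and $\mathrm{Tot}=\lim_{\Delta}$ is \emph{lax} symmetric monoidal, and then to promote the resulting lax structure to a strong one using the connectivity hypotheses; this is the dual of the corresponding statement for the relative tensor product. All of this is carried out relative to $\mathrm{CoAlg}(\mathrm{Mod}_R)$ with its tensor product of coalgebras, so I would first check that this symmetric monoidal structure restricts to $\mathrm{CoAlg}(\mathrm{Mod}_R)_{R/}^{\geq 2}$: for coaugmented $C,C'$ the coaugmentation splits the counit, so $C\otimes C'\simeq R\oplus\bar C\oplus\bar{C'}\oplus(\bar C\otimes\bar{C'})$ and hence $\overline{C\otimes C'}$ is again $2$-connective, while the monoidal unit $R$ (with its tautological coaugmentation) lies in the subcategory. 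The induced symmetric monoidal structure on the fibered product is then the evident one $(C,M,N)\otimes(C',M',N')=(C\otimes C',M\otimes M',N\otimes N')$, and the functor under study is $F\colon(C,M,N)\mapsto(C,\,M\,\square_C\,N)$, where $M\,\square_C\,N$ carries the $C$-bicomodule structure coming from the remaining (outer) coactions of $M$ and $N$.

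Next I would set up the factorisation. Writing $\mathrm{Cobar}(M,C,N)$ for the cosimplicial $C$-bicomodule $[n]\mapsto M\otimes C^{\otimes n}\otimes N$, one has $M\,\square_C\,N\simeq\mathrm{Tot}\,\mathrm{Cobar}(M,C,N)$, and the first point is that $(C,M,N)\mapsto\mathrm{Cobar}(M,C,N)$ refines to a strong symmetric monoidal functor into $\mathrm{Fun}(\Delta,\mathrm{BiCoMod}(\mathrm{Mod}_R))$ with its levelwise tensor product: the structure equivalences are the shuffles
\[
\bigl(M\otimes C^{\otimes n}\otimes N\bigr)\otimes\bigl(M'\otimes (C')^{\otimes n}\otimes N'\bigr)\ \simeq\ \bigl(M\otimes M'\bigr)\otimes\bigl(C\otimes C'\bigr)^{\otimes n}\otimes\bigl(N\otimes N'\bigr),
\]
which are coherent and compatible with the cosimplicial operators because the comultiplication and counit of $C\otimes C'$ are the shuffles of those of $C$ and $C'$. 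The second point is that the constant-diagram functor $\mathrm{BiCoMod}(\mathrm{Mod}_R)\to\mathrm{Fun}(\Delta,\mathrm{BiCoMod}(\mathrm{Mod}_R))$ is strong symmetric monoidal for the levelwise structure, so its right adjoint $\mathrm{Tot}$ is canonically lax symmetric monoidal (performed fiberwise over $\mathrm{CoAlg}(\mathrm{Mod}_R)$). Composing, $F$ acquires a lax symmetric monoidal structure with comparison maps
\[
\lambda\colon\ (M\,\square_C\,N)\otimes(M'\,\square_{C'}\,N')\ \longrightarrow\ (M\otimes M')\,\square_{C\otimes C'}\,(N\otimes N'),
\]
and it suffices to prove that $\lambda$ and the unit map are equivalences; the unit map is the equivalence $R\,\square_R\,R\simeq R$ coming from the fact that $\mathrm{Cobar}(R,R,R)$ is constant at $R$.

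The crux is the invertibility of $\lambda$, and here the connectivity hypotheses are essential. The basic estimate is that when $\bar C$ is $2$-connective and $M,N$ are connective, the partial-totalisation tower of $\mathrm{Cobar}(M,C,N)$ has $n$-th layer
\[
\mathrm{fib}\bigl(\mathrm{Tot}_n\to\mathrm{Tot}_{n-1}\bigr)\ \simeq\ \Omega^n\bigl(M\otimes\bar C^{\otimes n}\otimes N\bigr),
\]
which is $n$-connective since $\bar C^{\otimes n}$ is $2n$-connective; hence the tower converges, $M\,\square_C\,N$ is connective, and — because $\otimes$ is exact, so preserves finite limits and connectivity — tensoring a connective $R$-module with such a convergent tower commutes with its limit, as one checks on homotopy groups, each group in degree $i$ being computed by the single finite stage $\mathrm{Tot}_{i+1}$. (Applied to the comonad $C\otimes(-)\otimes C$ on $\mathrm{Mod}_R$, the same estimate shows the forgetful functor $\mathrm{BiCoMod}_C(\mathrm{Mod}_R)\to\mathrm{Mod}_R$ preserves this totalisation, so that the $C$-bicomodule $M\,\square_C\,N$ really is computed by the cobar totalisation on underlying modules and in particular lands in $\mathrm{BiCoMod}(\mathrm{Mod}_R)$.) Granting this, I would unwind $\lambda$: by the strong monoidality of $\mathrm{Cobar}$ its target is the totalisation of the diagonal of the bicosimplicial object $(p,q)\mapsto(M\otimes C^{\otimes p}\otimes N)\otimes(M'\otimes(C')^{\otimes q}\otimes N')$, which by the Eilenberg--Zilber theorem equals the iterated totalisation $\lim_{[p]}\lim_{[q]}$ of the latter; applying the commutation of $\otimes$ with convergent totalisations twice then identifies this double limit with the source of $\lambda$, compatibly with $\lambda$. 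Finally, a lax symmetric monoidal functor whose comparison maps and unit map are all equivalences is strong symmetric monoidal, which is the assertion of the theorem.

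I expect the main obstacle to be precisely the commutation of $\otimes$ with the cobar totalisation: the failure of $-\otimes X$ to preserve limits in $\mathrm{Mod}_R$ is the phenomenon noted in the introduction, and the requirement that the coaugmentation coideal be $2$-connective is imposed exactly so that the layers of the partial-totalisation tower gain connectivity linearly in $n$, reducing the question in each homotopy degree to the harmless commutation of $\otimes$ with finite limits. A secondary technical point will be to produce a genuinely coherent symmetric monoidal equivalence rather than a merely objectwise one; I would address this by keeping the monoidal structures present throughout — establishing $\mathrm{Cobar}$ as strong and $\mathrm{Tot}$ as lax at the level of symmetric monoidal $\infty$-categories — so that $\lambda$ is automatically a symmetric monoidal natural transformation and only its objectwise invertibility remains to be checked.
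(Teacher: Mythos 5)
Your proposal is correct and follows essentially the same route as the paper: the comparison map comes from the lax structure on totalization, and its invertibility is reduced, via the cosiftedness of $\Delta$ (your Eilenberg--Zilber step), to the commutation of $-\otimes Y$ for connective $Y$ with the cobar $\mathrm{Tot}$-tower, which holds because the layers of that tower gain connectivity linearly --- exactly the content of the paper's Lemmas \ref{com}, \ref{connective} and \ref{i=1,2}, with Proposition \ref{prop:suqare in LcMod} supplying the bicomodule lift. The only cosmetic difference is that you derive the layer connectivity from the d\'ecalage identification $\mathrm{fib}(\mathrm{Tot}^n\to\mathrm{Tot}^{n-1})\simeq\Omega^n(M\otimes\overline{C}^{\otimes n}\otimes N)$, whereas the paper proves the same bound by induction on $n$ using a pullback square.
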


\begin{rem}\label{bounded}
	In fact, it suffices to require the comodules to be bounded below, since we can reduce the proof to the connective case after finite suspensions.
\end{rem}

As an application of this theorem, we establish the following result concerning higher coalgebra structures for coTHH, which is the dual analogue of the well-known structures for topological Hochschild homology.

\begin{thm}\label{thm:coalg}
    The functor $\mathrm{coTHH}^R$ restricts to a functor
	\[\mathrm{coTHH}^R: \mathrm{CoAlg}_{\mathbb{E}_k}(\mathrm{Mod}_R)_{R/}^{\geq 2}\to \mathrm{CoAlg}_{\mathbb{E}_{k-1}}(\mathrm{Mod}_R),\]
	for $1\leq k\leq \infty$. Moreover, if $C\in \mathrm{CoCAlg}(\mathrm{Mod}_R)_{R/}^{\geq 2}$, then the map $\mathrm{coTHH}^R(C) \to C$ is terminal among all maps from an cocommutative R-coalgebra equipped with an $S^1$ action to $C$ through cocommutative maps.
\end{thm}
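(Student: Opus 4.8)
The plan is to deduce both assertions from Theorems~\ref{th1} and~\ref{th: sym} by running the standard $\mathrm{THH}$ arguments in the opposite category. Morally, $\mathbb{E}_k$-coalgebras in $\mathrm{Mod}_R$ are $\mathbb{E}_k$-algebras in $\mathrm{Mod}_R^{\mathrm{op}}$ and $\mathrm{coTHH}$ is $\mathrm{THH}$ there, so the theorem is formally dual to ``$\mathrm{THH}$ of an $\mathbb{E}_k$-algebra is an $\mathbb{E}_{k-1}$-algebra'' and to the universal property of $\mathrm{THH}(A)\simeq A\otimes S^1$ as a commutative algebra with circle action; the one non-formal ingredient — precisely what $\mathrm{Mod}_R^{\mathrm{op}}$ does not supply — is the compatibility of the relevant totalizations with $\otimes$, which is exactly the content of Theorem~\ref{th: sym}. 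Concretely, I would first upgrade $\mathrm{coTHH}^R$ to a symmetric monoidal functor on $\mathrm{CoAlg}(\mathrm{Mod}_R)_{R/}^{\geq 2}$. By Theorem~\ref{th1}, $\mathrm{coTHH}^R(C)\simeq C\ \square_{C\otimes C^{\mathrm{op}}}\ C$, and the assignment sending $C$ to the coalgebra $C\otimes C^{\mathrm{op}}$ together with $C$ regarded as a bicomodule over it in both slots is symmetric monoidal, using the symmetry of $\otimes$ to identify $(C\otimes C^{\mathrm{op}})\otimes(D\otimes D^{\mathrm{op}})\simeq(C\otimes D)\otimes(C\otimes D)^{\mathrm{op}}$. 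One checks that this lands in the source of the functor of Theorem~\ref{th: sym}: if the coaugmentation coideal $\bar C$ of $C$ is $2$-connective then, $R$ being connective, $\bar C\otimes\bar C$ is $4$-connective, so the coideal of $C\otimes C^{\mathrm{op}}$ is $2$-connective; and $C\simeq R\oplus\bar C$ is connective. Composing with $\square$ and with the (symmetric monoidal) forgetful functor $\mathrm{BiCoMod}(\mathrm{Mod}_R)\to\mathrm{Mod}_R$ yields the desired symmetric monoidal refinement; by Remark~\ref{bounded} one may run the same argument for bounded-below coalgebras.

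The first assertion is then formal. A symmetric monoidal functor induces a functor on $\mathbb{E}_{k-1}$-coalgebra objects, so applying $\mathrm{CoAlg}_{\mathbb{E}_{k-1}}(-)$ to the symmetric monoidal functor $\mathrm{coTHH}^R\colon\mathrm{CoAlg}_{\mathbb{E}_1}(\mathrm{Mod}_R)_{R/}^{\geq 2}\to\mathrm{Mod}_R$ produces a functor $\mathrm{CoAlg}_{\mathbb{E}_{k-1}}\bigl(\mathrm{CoAlg}_{\mathbb{E}_1}(\mathrm{Mod}_R)_{R/}^{\geq 2}\bigr)\to\mathrm{CoAlg}_{\mathbb{E}_{k-1}}(\mathrm{Mod}_R)$. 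By Dunn additivity the source is $\mathrm{CoAlg}_{\mathbb{E}_k}(\mathrm{Mod}_R)_{R/}^{\geq 2}$ — the coaugmentation and the connectivity bound being conditions on the underlying $\mathbb{E}_1$-coalgebra — which gives the claim for all $1\leq k\leq\infty$; in the degenerate case $k=1$ one uses that $R$ is terminal in $\mathrm{CoAlg}(\mathrm{Mod}_R)$, so that $\mathrm{CoAlg}_{\mathbb{E}_0}$ of the source is the source itself.

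For the universal property, take $k=\infty$: the first assertion then gives that $\mathrm{coTHH}^R$ refines to a functor $\mathrm{CoCAlg}(\mathrm{Mod}_R)_{R/}^{\geq 2}\to\mathrm{CoCAlg}(\mathrm{Mod}_R)$, and the cocyclic structure on the cobar construction promotes this to a functor into cocommutative $R$-coalgebras carrying an $S^1$-action. As in the identification $\mathrm{THH}(A)\simeq A\otimes S^1$, one recognizes the result as the cotensor $C\mapsto C^{S^1}=\lim_{S^1}\underline{C}$ formed in $\mathrm{CoCAlg}(\mathrm{Mod}_R)$ with $S^1$ acting by rotation, Theorem~\ref{th: sym} being exactly what makes this limit agree with $\mathrm{coTHH}^R(C)$. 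Now $C\mapsto C^{S^1}$, with its rotation action, is the value of the right adjoint to the forgetful functor $\mathrm{Fun}(BS^1,\mathrm{CoCAlg}(\mathrm{Mod}_R))\to\mathrm{CoCAlg}(\mathrm{Mod}_R)$ — equivalently, the right Kan extension along $\ast\to BS^1$ — and its counit is the map $\mathrm{coTHH}^R(C)\to C$. By the defining property of a right adjoint, $\bigl(\mathrm{coTHH}^R(C)\to C\bigr)$ is terminal in the comma category of cocommutative $R$-coalgebras equipped with an $S^1$-action and a cocommutative map to $C$, which is precisely the assertion; dually, $A\to\mathrm{THH}(A)\simeq A\otimes S^1$ is initial among maps from $A$ to a commutative $R$-algebra with $S^1$-action through commutative maps, i.e.\ $\mathrm{THH}$ is left adjoint to the forgetful functor $\mathrm{Fun}(BS^1,\mathrm{CAlg}_R)\to\mathrm{CAlg}_R$.

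The hard part is not any single computation but the bookkeeping needed to remain inside the connectivity range where Theorem~\ref{th: sym} applies — verifying that $C\otimes C^{\mathrm{op}}$, and every cosimplicial term occurring in the cobar construction and in the Kan extension, has a $2$-connective coaugmentation coideal and connective underlying module — together with the verification that the totalizations defining $\mathrm{coTHH}^R$ genuinely lift to, and are computed in, the relevant $\infty$-categories of higher coalgebras. It is there, rather than in the formal Dunn and Kan-extension steps, that the coaugmentation hypothesis is indispensable: it is what places $C\otimes C^{\mathrm{op}}$ in $\mathrm{CoAlg}(\mathrm{Mod}_R)_{R/}^{\geq 2}$, and hence what makes the cotensor product symmetric monoidal at all.
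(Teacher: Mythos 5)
Your proposal is correct and follows essentially the same route as the paper: symmetric monoidality of $\mathrm{coTHH}^R\simeq (-)\,\square_{(-)^e}\,(-)$ via Theorem \ref{th: sym} plus Dunn additivity for the first assertion (this is Proposition \ref{Ek}), and the identification of $\mathrm{coTHH}^R(C)$ with the cotensor $C^{S^1}$ in $\mathrm{CoCAlg}(\mathrm{Mod}_R)$ for the universal property (this is Propositions \ref{pullback} and \ref{prop:S^1}, which supply the detail you gesture at, namely that the relevant limit in $\mathrm{CoCAlg}(\mathrm{Mod}_R)$ is computed by the $\mathrm{Mod}_R$-totalization). One cosmetic slip: the coaugmentation coideal of $C\otimes C^{\mathrm{op}}$ is $\overline{C}\oplus\overline{C}\oplus(\overline{C}\otimes\overline{C})$, so its $2$-connectivity comes from the two $\overline{C}$ summands rather than from the $4$-connectivity of $\overline{C}\otimes\overline{C}$.
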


\subsection{Orgnization}
In section \ref{sec: def}, we recall definitions of coTHH and cotensor products, provide several examples, and give the proof of Theorem \ref{th1}. After this, we establish some formal properties of cotensor products in the $\infty$-category $\mathrm{Mod}_R$ for a connective $\mathbb{E}_{\infty}$-ring spectrum $R$ in section \ref{sec: prop of cotensor} and give a proof of Theorem \ref{th: sym}. Finally, in section \ref{sec: app}, we apply these results to prove Theorem \ref{thm:coalg}, study higher coalgebra structures on coTHH and Morita-Takeuchi invariance of coTHH, under suitable connectivity assumptions.

\subsection{Conventions}
We work with $\infty$-categories throughout, following \cite{HTT,HA}. All $\infty$-categories are assumed to be complete, and the limit of a cosimplicial object $X^{\bullet}$ of $\mathcal{C}$ is denoted by $\mathrm{Tot}(X^{\bullet})$, while the limit restricted to \(\Delta|_{\leq n}\) is denoted by $\text{Tot}^n(X^{\bullet})$. For an ordinary category $\mathcal{C}$, we regard it as an $\infty$-category through the nerve functor, and by abuse of notation we still denote it by $\mathcal{C}$. Finally, a spectrum $X\in \mathrm{Sp}$ is called $n$-connective if $\pi_i(X)=0$ for $i\leq n$. It is said to be connective, if it is $0$-connective.

\section{coHochschild homology and cotensor product}\label{sec: def}
In this section, we firstly review the definition of $\mathcal{O}$-coalgebras in a general symmetric monoidal $\infty$-category, where $\mathcal{O}$ is an $\infty$-operad. Then we present the definitions of (relative) coTHH and the cotensor product, and prove Theorem \ref{th1}. 

Given a symmetric monoidal $\infty$-category $\mathcal{C}$, let us first recall the construction of the symmetric monoidal structure on $\mathcal{C}^{\mathrm{op}}$ in \cite{BGN14}. Let $p:\mathcal{C}^{\otimes} \to \mathrm{N}(\mathrm{Fin}_*)$ be the coCartesian fibration associated to the symmetric monoidal structure of $\mathcal{C}$, which is classified by the functor $F:\mathrm{N}(\mathrm{Fin}_*) \to \widehat{\mathrm{Cat}}$. The same functor $F$ also classifies a Cartesian fibration $p^{\vee}:(\mathcal{C}^{\otimes})^{\vee}\to \mathrm{N}(\mathrm{Fin}_*)^{\mathrm{op}}$. Hence, we obtain another coCartesian fibration $(p^{\vee})^{\mathrm{op}}:((\mathcal{C}^{\otimes})^{\vee})^{\mathrm{op}}\to \mathrm{N}(\mathrm{Fin}_*)$ whose fiber over $\langle 1 \rangle\in \mathrm{N}(\mathrm{Fin}_*)$ is $\mathcal{C}^{\mathrm{op}}$. One can check that $(p^{\vee})^{\mathrm{op}}$ is a coCartesian fibration of $\infty$-operad.

\begin{Def}\cite[Definition 2.1]{Pe22}
	Let $\mathcal{C}$ be a symmetric monoidal $\infty$-category and $\mathcal{O}^{\otimes}$ be an $\infty$-operad. The $\infty$-category of $\mathcal{O}$-coalgebra in $\mathcal{C}$ is defined as 
	\[\mathrm{CoAlg}_{\mathcal{O}}(\mathcal{C}):= \mathrm{Alg}_{\mathcal{O}}(\mathcal{C}^{\mathrm{op}})^{\mathrm{op}}.\]
	 Given a coalgebra $C$ in $\mathcal{C}$, the $\infty$-category of right $C$-comodules in $\mathcal{C}$ is defined as 
	\[\mathrm{CoMod}_C(\mathcal{C}):= \mathrm{Mod}_C(\mathcal{C}^{\mathrm{op}})^{\mathrm{op}}=(\mathrm{Alg}_{\mathcal{RM}}(\mathcal{C}^{\mathrm{op}})\times_{\mathrm{Alg}(\mathcal{C}^{\mathrm{op}})} C)^{\mathrm{op}}.\]
	The $\infty$-categories of left $C$-comodules, $C$-$D$-bicomodules and bicomodule objects are defined similarly.
\end{Def}

For notational convenience, we write $\mathrm{CoAlg}(\mathcal{C})$ and $\mathrm{CoCAlg}(\mathcal{C})$ for the $\infty$-categories of $\mathbb{E}_1$-coalgebras and $\mathbb{E}_{\infty}$-coalgebras in $\mathcal{C}$, which we shall call coalgebras and cocommutative coalgebras respectively. For a coalgebra $C$, we denote its opposite coalgebra by $C^{\mathrm{op}}$ (see \cite[Remark 4.1.1.7]{HA}). By \cite[Proposition 3.2.4.7]{HA}, $\mathrm{CoCAlg}(\mathcal{C})$ admits a Cartesian symmetric monoidal structure given by pointwise tensor product. Moreover, \cite[Construction 3.2.4.1]{HA} endows $\mathrm{BiCoMod}$, the $\infty$-category of bicomodule objects, with a symmetric monoidal sturcture. To avoid confusion, we will use the symbol $\boxtimes$ to denote the tensor product on $\mathrm{BiCoMod}$.

In what follows, we formally define the topological coHochschild homology. For a comprehensive treatment of this construction, the reader is referred to \cite{BP23} and \cite{KP25}.
\begin{Def}
	Let $\mathcal{C}$ be a symmetric monoidal $\infty$-category, $C$ be a coalgebra in $\mathcal{C}$ and $M$ be a $C$-bicomodule. 
	\begin{enumerate}
		\item The topological coHochschild homology of $C$ is the topological Hochschild homology of $C$ considered as an algebra of $\mathcal{C}^{\mathrm{op}}$. We denote it by $\mathrm{coTHH}^{\mathcal{C}}(C)$.
		\item Similarly, the topological coHochschild homology of $C$ with coefficient $M$ is the the topological Hochschild homology of $C$ as an algebra of $\mathcal{C}^{\mathrm{op}}$ with coefficient $M$ as a $C$-bimodule in $\mathcal{C}^{\mathrm{op}}$. We denote it by  $\mathrm{coTHH}^{\mathcal{C}}(M;C)$.
	\end{enumerate}
	The existence of these objects is guaranteed by the completeness of $\mathcal{C}$.
\end{Def}

Unwinding the definitions, we see that $\mathrm{coTHH}^{\mathcal{C}}(C)$ is the totalization of the following cyclic cobar construction:
\begin{equation*}
	C \rightrightarrows C\otimes C \threerightarrows C\otimes C \otimes C \fourrightarrows \cdots .
\end{equation*}
Note that, since $\mathrm{THH}^{\mathcal{C}^{\mathrm{op}}}$ is a functor from $\mathrm{Alg}(\mathcal{C}^{\mathrm{op}})$ to $\mathcal{C}^{\mathrm{op}}$, by taking opposites we obtain 
\[\mathrm{coTHH}^{\mathcal{C}}:\mathrm{CoAlg}(\mathcal{C})\to \mathcal{C}\]
as a functor. Similarly, $\mathrm{coTHH}^{\mathcal{C}}(-;C)$ is a functor from $\mathrm{BiCoMod}_C(\mathcal{C})$ to $\mathcal{C}$.

\begin{exmp}\label{example}
	Let $\mathcal{C}$ be a symmetric monoidal $\infty$-category such that the tensor product preserves totalizations separately in each variable. Given a cocommutative coalgebra $C\in \mathcal{C}$, we can consider it as an object in $\mathrm{CAlg}(\mathcal{C}^{\mathrm{op}})$. Moreover, it is well known that
		$\mathrm{THH}^{\mathcal{C}^{\mathrm{op}}}(C) \simeq C\otimes S^1\in \mathrm{CAlg}(\mathcal{C}^{\mathrm{op}}).$
		Hence, we deduce that 
		\[\mathrm{coTHH}^{\mathcal{C}}(C)\simeq C^{S^1}\in \mathrm{CoCAlg}(\mathcal{C}),\]
		where $C^{S^1}$ is the cotensor of $C$ with $S^1$.
\end{exmp}

Given a coalgebra $C$ in a symmetric monoidal $\infty$-category $\mathcal{C}$ and a right $C$-comodule $M$, we may regard $M$ as a $\mathbf{1}$-$C$-bicomodule by \cite[Corollary 4.3.2.8]{HA}, where $\mathbf{1}$ is the unit of $\mathcal{C}$. Similarly, a left $C$-comodule can be viewed as a $C$-$\mathbf{1}$-bicomodule. As established in \cite[Construction 4.4.2.7]{HA}, there exists a canonical two-sided bar construction on $M$ and $N$ in $\mathcal{C}^{\mathrm{op}}.$ Consequently, we obtain a cosimplicial object $\Omega_{\mathcal{C}}^{\bullet}(M,C,N)$ in $\mathcal{C}$, called the two-sided cobar construction on $M$ and $N$.

\begin{Def}
	Let $\mathcal{C}$ be a symmetric monoidal $\infty$-category, $C$ be a coalgebra in $\mathcal{C}$, and let $M$ be a right $C$-comodule and $N$ be a left $C$-comodule. The cotensor product of $M$ and $N$ is the totalization of the cosimplicial object $\Omega_{\mathcal{C}}^{\bullet}(M,C,N)$  in $\mathcal{C}$, denoted by $M\square_C N$.
\end{Def}

In other words, $M\square_C N$ is the totalization of the following cobar construction:
\[ M\otimes N\rightrightarrows M\otimes C\otimes N \threerightarrows M\otimes C\otimes C\otimes N \fourrightarrows \cdots.\]
It is clear that the definition is natural in both $C$, $M$ and $N$, hence we obtain a functor
\[\square:\mathrm{RCoMod}(\mathcal{C}) \times_{\mathrm{CoAlg}(\mathcal{C})}\mathrm{LCoMod}(\mathcal{C}) \to \mathcal{C}.\]

\begin{rem}
	Unlike \cite[Definition 4.4.2.10]{HA}, since we do not assume the tensor product preserve $\Delta$-indexed limits separately in each variable, the functor $\square$ does not necessarily lift to a functor from ${_A\mathrm{CoMod}_C}\times {_C\mathrm{CoMod}}_B$ to $_A\mathrm{CoMod}\mathcal{C}_B$. However, such a lifting may be constructed under certain connectivity assumptions, see Proposition \ref{prop:suqare in LcMod}.
\end{rem}

Nonetheless, given a coalgebra $C\in\mathcal{C}$, the dual of \cite[Example, 4.7.2.5]{HA} shows that $\mathrm{CoMod}_C(\mathcal{C})$ is comonoidic  over $\mathcal{C}$, hence the equivalence $M\simeq M\square_C C$ in $\mathcal{C}$ can be lifted to $\mathrm{CoMod}_C(\mathcal{C})$. Furthermore, we have the following stronger version:

\begin{lem}\label{lem: sq C}
	Given coalgebras $C,D$ in $\mathcal{C}$, the functor $-\square_C C:\mathrm{CoMod}_C\to \mathcal{C}$ restricts to a functor from ${_D\mathrm{CoMod}_C}$ to ${_D\mathrm{CoMod}_C}$. Similarly, the functor $D\square_D -:{_D\mathrm{CoMod}}\to \mathcal{C}$ restricts to a functor from ${_D\mathrm{CoMod}_C}$ to ${_D\mathrm{CoMod}_C}$. Furthermore, both functors are identity functors.
\end{lem}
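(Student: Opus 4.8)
The plan is to realize each of the two functors as the totalization of a cosimplicial object that already lives in ${_D\mathrm{CoMod}_C(\mathcal{C})}$ and becomes \emph{split} after forgetting down to $\mathcal{C}$; comonadicity of the forgetful functor then forces that totalization to be computed inside ${_D\mathrm{CoMod}_C(\mathcal{C})}$ and to agree with the cotensor product formed in $\mathcal{C}$, so that no hypothesis on $\mathcal{C}$ is needed beyond its completeness. Throughout, I use that, dualizing the monadicity of the forgetful functor out of bimodule objects (cf.\ \cite[\S 4.7]{HA}), the forgetful functor $G\colon {_D\mathrm{CoMod}_C(\mathcal{C})}\to\mathcal{C}$ is comonadic, exactly as in the case of $\mathrm{CoMod}_C(\mathcal{C})$ recalled above.

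For $-\,\square_C\,C$: first I would apply the opposite of the functorial two-sided bar construction \cite[Construction 4.4.2.7]{HA} to bimodules in $\mathcal{C}^{\mathrm{op}}$ to see that, when $M$ is a $D$-$C$-bicomodule, the cosimplicial object $\Omega^\bullet_{\mathcal{C}}(M,C,C)$ refines, functorially in $M$, to a cosimplicial object $\widetilde{\Omega}^\bullet(M)$ of ${_D\mathrm{CoMod}_C(\mathcal{C})}$: in degree $n$ it is $M\otimes C^{\otimes n}\otimes C$, carrying the left $D$-coaction inherited from $M$ and the right $C$-coaction inherited from the comultiplication of the last tensor factor, with all cofaces and codegeneracies bicomodule maps. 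Applying $G$ recovers $\Omega^\bullet_{\mathcal{C}}(M,C,C)$, coaugmented by $M$. Next I would observe that this coaugmented cosimplicial object of $\mathcal{C}$ is split: the counit $C\to\mathbf 1$, applied to the tensor factor adjacent to $M$ --- and, in degree $0$, to the unique remaining factor $C$, where counitality of the comodule $M$ makes it retract the coaugmentation --- supplies the extra codegeneracies. (The bottom one is only a map in $\mathcal{C}$, not a bicomodule map, which is why splitness lives downstairs.) Thus $\widetilde{\Omega}^\bullet(M)$ is $G$-split, so by the comonadic form of the Barr--Beck theorem \cite[Theorem 4.7.3.5]{HA} the functor $G$ creates its totalization: $\mathrm{Tot}\,\widetilde{\Omega}^\bullet(M)$ exists in ${_D\mathrm{CoMod}_C(\mathcal{C})}$, is carried by $G$ to $\mathrm{Tot}\,\Omega^\bullet_{\mathcal{C}}(M,C,C) = M\,\square_C\,C$, and is equivalent to $M$. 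This provides the factorization of $-\,\square_C\,C$ through ${_D\mathrm{CoMod}_C(\mathcal{C})}$, and the coaugmentations assemble into a natural equivalence from the identity, identifying the resulting endofunctor with $\mathrm{id}$.

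For $D\,\square_D\,-$ I would run the mirror-image argument: for a $D$-$C$-bicomodule $N$, the cosimplicial object $\Omega^\bullet_{\mathcal{C}}(D,D,N)$, with $\Omega^n = D\otimes D^{\otimes n}\otimes N$, refines to a cosimplicial object of ${_D\mathrm{CoMod}_C(\mathcal{C})}$ using the left $D$-coaction on the first factor $D$ and the right $C$-coaction on $N$, it is coaugmented by $N$, and the counit of $D$ applied to the factor adjacent to $N$ exhibits it as split over $N$ after applying $G$; the same comonadicity argument then identifies its totalization with $N$ in ${_D\mathrm{CoMod}_C(\mathcal{C})}$, and the resulting endofunctor with $\mathrm{id}$.

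The step I expect to be the main obstacle is the first one: producing the refined cosimplicial object $\widetilde{\Omega}^\bullet$ coherently --- not merely levelwise --- and correctly pinning down its split structure over $M$ (resp.\ $N$). This is exactly where the functoriality in bimodules of \cite[Construction 4.4.2.7]{HA} (carried out in $\mathcal{C}^{\mathrm{op}}$) and the counit of $C$ (resp.\ $D$) are used. Everything after that is formal: $G$-splitness is precisely what lets the totalization be taken in ${_D\mathrm{CoMod}_C(\mathcal{C})}$ and compared with the one in $\mathcal{C}$ with no exactness assumption on the ambient category.
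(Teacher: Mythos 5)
Your argument is correct and follows essentially the same route as the paper: both hinge on refining $\Omega^\bullet_{\mathcal{C}}(M,C,C)$ (resp.\ $\Omega^\bullet_{\mathcal{C}}(D,D,N)$) to a cosimplicial bicomodule whose underlying coaugmented cosimplicial object in $\mathcal{C}$ is split, so that its totalization is created by the forgetful functor and identified with the coaugmentation $M$ (resp.\ $N$). The only cosmetic difference is that you package the creation of split totalizations via comonadicity and Barr--Beck, whereas the paper invokes \cite[Corollary 4.2.3.5]{HA} directly together with ${_D\mathrm{CoMod}_C}\simeq{_{D\otimes C^{\mathrm{op}}}}\mathrm{CoMod}$; these amount to the same underlying fact.
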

\begin{proof}
	Given $M\in {_D\mathrm{CoMod}_C}$, $M\square_C C$ is the totalization of the following cobar construction in the $\infty$-category $\mathcal{C}$:
\[\Omega_{\mathcal{C}}^{\bullet}(M,C,C): M\otimes C \rightrightarrows M\otimes C\otimes C \threerightarrows M\otimes C\otimes C\otimes C \fourrightarrows \cdots.\]
This cosimplicial object splits with the coaugmentation given by $M\to M\otimes C$, the coaction map of $M$. In particular, for any object $X\in \mathcal{C}$, the cosimplicial object $X\otimes \Omega_{\mathcal{C}}^{\bullet}(M,C,C)$ also splits with the totalization 
    \[\mathrm{Tot}(X\otimes \Omega_{\mathcal{C}}^{\bullet}(M,C,C))\simeq X\otimes M\simeq X\otimes \mathrm{Tot}(\Omega_{\mathcal{C}}^{\bullet}(M,C,C)).\] 
    Now the results follows from \cite[Corollary, 4.2.3.5]{HA} and the equivalence of $\infty$-categories
    \[{_D\mathrm{CoMod}_C}(\mathcal{C})\simeq {_{D\otimes C^{\mathrm{op}}}} \mathrm{CoMod}(\mathcal{C}),\]
    see \cite[\S 4.6.3]{HA}. Furthermore, as noted above, the limit of the splitting cosimplicial object $\Omega_{\mathcal{C}}^{\bullet}(M,C,C)$ is $M$. Hence, $-\square_C$ is the identity functor.
\end{proof}

In particular, taking $D=C$ in the above Lemma and regarding $C$ as a $C$-bicomodule, we see that $\mathrm{Tot}_{\mathcal{C}}(\Omega_R^{\bullet}(C,C,C))$ admits a $C$-bicomodule structure, and there is an equivalence 
\begin{equation}\label{bicomod}
	C\square_C C\stackrel{\text{def}}{=}\mathrm{Tot}_{\mathcal{C}}(\Omega_R^{\bullet}(C,C,C))\simeq C\in\mathrm{\mathrm{BiCoMod}_C}(\mathcal{C}).
\end{equation}
Here, we write the totalization as $\mathrm{Tot}_{\mathcal{C}}(X^{\bullet})$ to emphasize that the limit is taken in $\mathcal{C}$

The proof of Theorem \ref{th1} is now straightforward.

\begin{thm}\label{cot and cothh}
Let $C$ be a coalgebra in $\mathcal{C}$, and let $M$ be a $C$-bicomodule, then there is an equivalence
\begin{equation}\label{coTHH and sq}
	M \square_{C^e} C\simeq \mathrm{coTHH}^{\mathcal{C}}(M;C),
\end{equation}
    where $C^e=C\otimes C^{\mathrm{op}}$ is the enveloping coalgebra of $C$.
\end{thm}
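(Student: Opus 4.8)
The plan is to deduce the statement from the definition of $\mathrm{coTHH}$ together with the classical computation of $\mathrm{THH}$ via the enveloping algebra. By construction $\mathrm{coTHH}^{\mathcal{C}}(M;C) = \mathrm{THH}^{\mathcal{C}^{\mathrm{op}}}(M;C)$, where $C$ is viewed as an algebra in $\mathcal{D} := \mathcal{C}^{\mathrm{op}}$ and $M$ as a $C$-bimodule in $\mathcal{D}$; since $\mathcal{C}$ admits small limits, $\mathcal{D}$ admits geometric realizations, so every bar construction we use has a colimit in $\mathcal{D}$. Inside $\mathcal{D}$, the fundamental theorem recalled in the introduction gives $\mathrm{THH}^{\mathcal{D}}(M;C) \simeq M \otimes_{C \otimes_{\mathcal{D}} C^{\mathrm{op}}} C$, with $\otimes_{\mathcal{D}}$, the opposite algebra $C^{\mathrm{op}}$, and the relative tensor product all formed in $\mathcal{D}$. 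It then remains to rewrite the right-hand side in $\mathcal{C}$.

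For that translation, recall from \cite{BGN14} that the symmetric monoidal structure on $\mathcal{C}^{\mathrm{op}}$ has, on objects, the same tensor product as $\mathcal{C}$. Hence $C \otimes_{\mathcal{D}} C^{\mathrm{op}}$, regarded as a coalgebra in $\mathcal{C}$, is nothing but the enveloping coalgebra $C^{e} = C \otimes C^{\mathrm{op}}$ (the opposite algebra in $\mathcal{D}$ corresponding to the opposite coalgebra in $\mathcal{C}$, cf.\ \cite[Remark 4.1.1.7]{HA}), and $M$, viewed as a $C^{e}$-module in $\mathcal{D}$, is $M$ viewed as a $C^{e}$-comodule in $\mathcal{C}$. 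Moreover the relative tensor product $M \otimes_{C^{e}} C$ in $\mathcal{D}$ is, by construction, the geometric realization --- a colimit in $\mathcal{D}$, equivalently a limit in $\mathcal{C}$ --- of the two-sided bar construction $\mathrm{Bar}^{\mathcal{D}}_{\bullet}(M, C^{e}, C)$; passing to opposites, this is precisely the totalization in $\mathcal{C}$ of the two-sided cobar construction $\Omega_{\mathcal{C}}^{\bullet}(M, C^{e}, C)$, i.e.\ $M \square_{C^{e}} C$. Combining the two equivalences gives the theorem.

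The step carrying the real content --- and the main obstacle --- is the fundamental theorem for $\mathrm{THH}$ itself in this generality, since we do not assume the tensor product of $\mathcal{C}$ (equivalently of $\mathcal{D}$) preserves (co)limits separately in each variable. Concretely, one must show that the cyclic cobar construction $[n] \mapsto M \otimes C^{\otimes n}$ whose totalization is $\mathrm{coTHH}^{\mathcal{C}}(M;C)$ agrees, as a cosimplicial object of $\mathcal{C}$, with $M \square_{C^{e}} \Omega_{\mathcal{C}}^{\bullet}(C,C,C)$. Levelwise this is the dual of the classical identity: $\Omega_{\mathcal{C}}^{n}(C,C,C) = C \otimes C^{\otimes n} \otimes C$ is the cofree $C^{e}$-comodule on $C^{\otimes n}$, and $M \square_{C^{e}}(C^{e} \otimes X) \simeq M \otimes X$ by an argument as in Lemma \ref{lem: sq C}. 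Promoting this to an equivalence of cosimplicial objects is a coherence bookkeeping matter, most cleanly handled by exhibiting both as the output of one cobar functor. Granting it, one finishes purely formally: by \eqref{bicomod} we have $\Omega_{\mathcal{C}}^{\bullet}(C,C,C) \simeq C$ in $\mathrm{BiCoMod}_{C}(\mathcal{C})$, and, as observed in the proof of Lemma \ref{lem: sq C}, the cosimplicial object $X \otimes \Omega_{\mathcal{C}}^{\bullet}(C,C,C)$ splits with totalization $X \otimes C$ for every $X \in \mathcal{C}$; writing $M \square_{C^{e}}\Omega_{\mathcal{C}}^{\bullet}(C,C,C)$ as the bicosimplicial object $([n],[m]) \mapsto M \otimes (C^{e})^{\otimes m} \otimes \Omega_{\mathcal{C}}^{n}(C,C,C)$ and interchanging the two totalizations, the inner one over $[n]$ collapses to $M \otimes (C^{e})^{\otimes m} \otimes C$, whence $\mathrm{Tot}_{\mathcal{C}}\bigl(M \square_{C^{e}} \Omega_{\mathcal{C}}^{\bullet}(C,C,C)\bigr) \simeq \mathrm{Tot}_{\mathcal{C}}\bigl(M \otimes (C^{e})^{\otimes \bullet} \otimes C\bigr) = M \square_{C^{e}} C$.
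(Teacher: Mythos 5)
Your argument is correct and, once the opening reduction to the ``fundamental theorem for $\mathrm{THH}$ in $\mathcal{C}^{\mathrm{op}}$'' is stripped away (you rightly identify that step as carrying all the content), the proof you actually give coincides with the paper's: both rest on the bicosimplicial object $([m],[n])\mapsto M\otimes(C^e)^{\otimes m}\otimes\Omega^n_{\mathcal{C}}(C,C,C)$, Fubini for totalizations, and the fact that the split cobar constructions $\Omega^{\bullet}_{\mathcal{C}}(C,C,C)$ and $\Omega^{\bullet}_{\mathcal{C}}(M,C^e,C^e)$ remain split, hence have preserved totalizations, after tensoring with an arbitrary object. The ``coherence bookkeeping'' you defer --- promoting the levelwise identification $M\square_{C^e}(C^e\otimes C^{\otimes n})\simeq M\otimes C^{\otimes n}$ to an equivalence of cosimplicial objects --- is left at the same level of implicitness in the paper's own proof, so it is not a gap relative to that standard.
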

\begin{proof}
	It follows from Equation \ref{bicomod} that
	\begin{equation*}
		\begin{split}
			M\ \square_{C^e}C & \stackrel{\text{def}}{=} \lim_{[i] \in \Delta} (M\otimes (C^e)^{\otimes i}\otimes C)\\
			& \simeq \lim_{[i] \in \Delta}[M\otimes (C^e)^{\otimes i}\otimes \lim_{[j] \in \Delta}(C\otimes C^{\otimes j}\otimes C).]
			\end{split}
	\end{equation*}		
	Moreover, by Equation \ref{bicomod} and the fact that $\Omega^{\bullet}_{\mathcal{C}}(C,C,C)$ splits, the above is equivalent to
	\begin{equation*}
		\begin{split}
		    & \lim_{[i] \in \Delta} \lim_{[j] \in \Delta}[M\otimes (C^e)^{\otimes i}\otimes (C\otimes C^{\otimes j}\otimes C)]\\		
			& \simeq \lim_{[j] \in \Delta} \lim_{[i] \in \Delta}(M\otimes (C^e)^{\otimes i}\otimes C^e \otimes C^{\otimes j})\\
			& \simeq \lim_{[j] \in \Delta} [\lim_{[i] \in \Delta}(M\otimes (C^e)^{\otimes i}\otimes C^e) \otimes C^{\otimes j}]\\
			& \simeq \lim_{[j] \in \Delta} (M \otimes C^{\otimes j}) \stackrel{\text{def}}{=} \mathrm{coTHH}^R(M,C).
		\end{split}
	\end{equation*}
	 Similarly, the splitting of $\Omega^{\bullet}_{\mathcal{C}}(M,C^e,C^e)$ yields the third equivalence. The other two equivalences are immediately. 
\end{proof}

\begin{exmp}
		Let $\mathcal{C}$ be a presentable symmetric monoidal $\infty$-category, if the underlying objects of an algebra $A$ and an $A$-bimodule $M$ are dualizable, then $A^{\vee}$ is an coalgebra and $M^{\vee}$ is a $A^{\vee}$-bicomodule, since $A^{\vee}\simeq \mathrm{hom}_{\mathcal{C}}(A,\mathbf{1})$, $M^{\vee}\simeq \mathrm{hom}_{\mathcal{C}}(M,\mathbf{1})$ and $\mathrm{hom}_{\mathcal{C}}(-,\mathbf{1})$ is symmetric monoidal when restricted to dualizable objects. Also note that $(M\otimes A^{\otimes_i})^{\vee}\simeq M^{\vee}\otimes (A^{\vee})^{\otimes i}$. Hence, by Equation \ref{coTHH and sq}, we obtain that 
	\[\mathrm{coTHH}_{\mathcal{C}}(M^{\vee};A^{\vee})\simeq M^{\vee}\square_{(A^{\vee})^e} A^{\vee}\simeq (M \otimes_{A^e} A)^{\vee} \simeq \mathrm{THH}^{\mathcal{C}}(M;A)^{\vee}\]
    since $\mathrm{hom}_{\mathcal{C}}(-,\mathbf{1})$ sends colimits to limits. A more comprehensive treatment of this topic can be found in \cite{BP23}.
\end{exmp}

\begin{exmp}[\cite{HS21,Mal17}]
	Consider the $\infty$-category of spaces, denoted by $\mathcal{S}$, with Cartesian symmetric monoidal structure, any object $X\in \mathcal{S}$ is a cocommutative coalgebra with the structure map given by the diagonal map. Since $\Sigma_+^{\infty}$ is symmetric monoidal, given an EMSS-good space $X$, we have
    \begin{equation*}
	\begin{split}
	 \mathrm{coTHH}^{\mathbb{S}}(\Sigma_+^{\infty} X) & \simeq \Sigma_+^{\infty} X\square_{\Sigma_+^{\infty} (X\times X)}\Sigma_+^{\infty} X\\
		& \simeq \Sigma_+^{\infty}(X \square_{(X\times X)} X)\\
		& \simeq \Sigma_+^{\infty} X^{S^1},
	\end{split}
	\end{equation*}
	where the second equivalence follows from \cite[Lemma 3.17]{HS21} and the third equivalence follows from Example \ref{example}. Furthermore, there are equivalences
	\[X^{S^1} \simeq \mathrm{Tot}\mathrm{Map}_*(\Delta^1/\partial\Delta^1,X) \simeq \mathrm{Map}_*(|\Delta^1/\partial\Delta^1|,X)\simeq LX,\]
	where $LX$ the free loop space for $X$, and
	\[\mathrm{Tot}\mathrm{Map}_*(\Delta^1/\partial\Delta^1,X)\simeq \mathrm{Tot}(\mathrm{coTHH}^{\mathcal{S}}_{\bullet}(X))=\mathrm{coTHH}^{\mathcal{S}}(X).\]
	Therefore, we have $\mathrm{coTHH}^{\mathbb{S}}(\Sigma_+^{\infty} X)\simeq \Sigma_+^{\infty} \mathrm{coTHH}^{\mathcal{S}}(X)$.
\end{exmp}

\section{The monoidality of the cotensor product}\label{sec: prop of cotensor}
In this section, we fix a connective $\mathbb{E}_{\infty}$-ring spectrum $R$ and study properties of the cotensor product in the symmetric monoidal $\infty$-category $\mathrm{Mod}_R$. For notational simplicity, we shall write $\otimes_R$ as $\otimes$. A coalgebra in $\mathrm{Mod}_R$ will be called an $R$-coalgebra. In general, since tensor products do not preserve limits separately in each variable, the cotensor product lacks many desirable properties compared to the relative tensor product. To overcome this technical challenge, we shall restrict our consideration to connective comodules over coaugmented coalgebras whose coaugmentation coideal is $2$-connective. We denote the $\infty$-category of such coalgebras by $\mathrm{CoAlg}(\mathrm{Mod}_R)_{R/}^{\geq 2}$. Note that all our conclusions remain true for bounded below comodules.

By the $\infty$-categorical Dold-Kan correspondence, the totalization of a cosimplicial object $X^{\bullet}$ is equivalent to the limit of the corresponding cofiltered object 
\[\cdots\to \mathrm{Tot}^{n+1}(X^{\bullet}) \to \mathrm{Tot}^{n}(X^{\bullet}) \to \cdots \to \mathrm{Tot}^0(X^{\bullet}).\]
Therefore, the question of whether the tensor product preserving totalizations separately in each variable can be reduced to whether the tensor product functor preserves cofiltered limits separately in each variable. The following lemma will be used repeatedly in our argument:
\begin{lem}\label{com}
	Let $X$ be a functor from $\mathrm{N}(\mathbb{Z}^{op}_{\geq0})$ to $\mathrm{Mod}_R$ such that the fiber of $X_{n+1}\to X_n$ is $n$-connective, then for any connective $R$-module $Y$ and the natural map 
\[Y\otimes \varprojlim_{i} X_i\to \varprojlim_{i} (Y\otimes X_i)\]
    is an equivalence.
\end{lem}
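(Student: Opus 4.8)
The plan is to reduce the statement to a connectivity estimate on the tower $\{X_i\}$ and then invoke the Milnor-type exact sequence / convergence of the associated spectral sequence, together with the fact that over a connective ring $R$, tensoring with a connective module is right $t$-exact and hence interacts well with connectivity of fibers. First I would recall that the natural comparison map $Y\otimes\varprojlim_i X_i\to\varprojlim_i(Y\otimes X_i)$ always exists; the content is that it is an equivalence. Since $\mathrm{Mod}_R$ is stable and the source and target are both obtained by taking limits of a tower, it suffices to show that the \emph{fiber} of the comparison map vanishes, i.e.\ that $Y\otimes(-)$ commutes with the limit of this particular tower.

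The key step is the following observation: because the fiber of $X_{n+1}\to X_n$ is $n$-connective and $Y$ is connective, the fiber of $Y\otimes X_{n+1}\to Y\otimes X_n$ is again $n$-connective (right $t$-exactness of $Y\otimes(-)$ on $\mathrm{Mod}_R^{\mathrm{cn}}$, using connectivity of $R$; see \cite[\S 7.1, \S 1.3]{HA} or \cite[Remark 2.2.1.2]{HA} and \cite[Proposition 7.2.1.19]{HA} for the $t$-structure on $\mathrm{Mod}_R$). Consequently both towers $\{X_i\}$ and $\{Y\otimes X_i\}$ are towers whose successive fibers become arbitrarily highly connected, so each tower is \emph{pro-constant through any given degree}: for any fixed $m$, the maps $\pi_m X_i\to \pi_m\varprojlim_j X_j$ are isomorphisms for $i$ large, and likewise for $Y\otimes X_i$. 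Concretely, for a tower with $n$-connective transition fibers, the limit $\varprojlim_i X_i$ has the property that $X_{n+2}\to\varprojlim_i X_i$ is $n$-connective; this follows from the Milnor sequence $0\to{\varprojlim}^1\,\pi_{m+1}X_i\to \pi_m\varprojlim_i X_i\to \varprojlim_i\pi_m X_i\to 0$ together with the fact that the pro-systems $\{\pi_m X_i\}_i$ are eventually constant (in fact stabilize once $i>m$), so the $\varprojlim^1$-term vanishes.

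Now I would conclude as follows. Fix an integer $m$. Choose $N>m+1$; then $X_N\to\varprojlim_i X_i$ and $X_N\to X_j$ for $j\le N$ all induce isomorphisms on $\pi_k$ for $k\le m$ (using the connectivity of the transition fibers and the computation of the limit above). Applying the same reasoning to the tower $\{Y\otimes X_i\}$, whose transition fibers are also eventually highly connected, the map $Y\otimes X_N\to\varprojlim_i(Y\otimes X_i)$ is an isomorphism on $\pi_k$ for $k\le m$. On the other hand $Y\otimes(-)$ preserves the map $X_N\to\varprojlim_i X_i$, and since this map is $(m+1)$-connective and $Y\otimes(-)$ is right $t$-exact, $Y\otimes X_N\to Y\otimes\varprojlim_i X_i$ is also $(m+1)$-connective, hence an isomorphism on $\pi_k$ for $k\le m$. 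Chaining these isomorphisms, the comparison map $Y\otimes\varprojlim_i X_i\to\varprojlim_i(Y\otimes X_i)$ is an isomorphism on $\pi_m$. Since $m$ was arbitrary, it is an equivalence.

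The main obstacle I anticipate is making the bookkeeping of connectivity bounds precise and uniform: one must be careful that "fiber of $X_{n+1}\to X_n$ is $n$-connective" (in the paper's convention, $\pi_i=0$ for $i\le n$) really does force $X_N\to\varprojlim_i X_i$ to be highly connective with an explicit bound, and that the indexing $\mathrm{N}(\mathbb{Z}^{\mathrm{op}}_{\ge 0})$ behaves as a genuine tower so that the Milnor sequence and the eventual constancy of $\{\pi_m X_i\}$ apply. Everything else—existence of the comparison map, stability, right $t$-exactness of $Y\otimes(-)$ over a connective ring—is standard and can be cited from \cite{HA}.
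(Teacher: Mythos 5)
Your proposal is correct and follows essentially the same route as the paper: both arguments use the Milnor sequence with vanishing $\varprojlim^1$ to show the fiber of the projection $\varprojlim_i X_i \to X_j$ is $(j-1)$-connective, then use that tensoring with the connective module $Y$ preserves this connectivity (the paper packages the conclusion as a single fiber sequence whose fiber $\varprojlim_j(Y\otimes F_j)$ vanishes, while you chain isomorphisms degree by degree, which is the same content). Only note that the natural maps you write as $X_N\to\varprojlim_i X_i$ should go the other way, $\varprojlim_i X_i\to X_N$; this does not affect the argument.
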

\begin{proof}
By the assumption that the fiber of $X_{n+i+1}\to X_{n+i}$ is $(n+i)$-connective, we see that $\pi_{n+1}(X_{n+i+1})\to \pi_{n+1}(X_{n+i})$ is an isomorphism for $i\geq 2$. Hence, 
	in the Milnor exact sequence
    \[0\to {\varprojlim_{i}}^1\pi_{n+1}(X_i)\to \pi_n(\varprojlim_{i} X_i)\to \varprojlim_{i}\pi_n(X_i)\to 0,\]
    the left term is zero and 
    \[\pi_n(\varprojlim_{i} X_i) \cong \varprojlim_{i}\pi_n(X_i) \cong \pi_n(X_j),\]
    for any $j\geq n+1$.
    Moreover, this isomorphism is induced from natural projections $p_j:\varprojlim_{i} X_i\to X_j,$ in other words, the fiber $F_j$ of $p_j$ is $(j-1)$-connective. Now, the following fiber sequence 
    \[\varprojlim_{i} (Y\otimes F_i)\to Y\otimes \varprojlim_{i} X_i\to \varprojlim_{i} (Y\otimes X_i)\]
    shows that $Y\otimes \varprojlim_{i} X_i\simeq \varprojlim_{i} (Y\otimes X_i)$, since $Y$ and $R$ are connective. 
\end{proof}

Now, for a nonempty finite set $S$, let $\mathcal{P}(S)$ be the partially ordered set of all nonempty subsets of $S$ and $\mathcal{P}^+(S)$ be the partially ordered set of all subsets of $S$. By \cite[Lemma 1.2.4.17]{HA}, the functor
\[\mathcal{P}([n])\simeq \Delta^{inj}_{/[n]}\hookrightarrow \Delta_{\leq n}\]
is right cofinal. In particular, the limit of the composite functor 
\[\mathcal{P}([n])\to \Delta_{\leq n}\xrightarrow{\Omega_R^{\bullet}(M,C,N)} \mathrm{Sp}\]
 is precisely $\mathrm{Tot}^n(\Omega_R^{\bullet}(M,C,N))$. Moreover, the inclusion $[n] \xrightarrow{i_n} [n+1]$, mapping $i \mapsto i$, induces a natural map:
\[i_n^*: \mathrm{Tot}^{n+1}(\Omega_R^{\bullet}(M,C,N)) \longrightarrow \mathrm{Tot}^n(\Omega_R^{\bullet}(M,C,N)).\] 

Our next step is to analyze the connectivity of the map $i_n^*$. Let $M$ be a right $C$-comodule, we denote the map $M\to \mathrm{Tot}^n(\Omega_R^{\bullet}(M,C,C))$ induced by the coaction map of $M$ by $f_n$.

\begin{lem}\label{connective}
Given $C\in \mathrm{CoAlg}(\mathrm{Mod}_R)_{R/}^{\geq 2}$, a connective right $C$-comodule $M$ and a connective left $C$-comodule $N$, the fiber of the natural map $\mathrm{Tot}^{n+1}(\Omega_R^{\bullet}(M,C,N)) \xrightarrow{i_n^*} \mathrm{Tot}^n(\Omega_R^{\bullet}(M,C,N))$ is $n$-connective.
\end{lem}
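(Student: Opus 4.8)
The plan is to reduce the assertion to a single connectivity estimate, exploiting that $C$ is coaugmented. First I would record that, since $C\in\mathrm{CoAlg}(\mathrm{Mod}_R)_{R/}^{\geq 2}$ is coaugmented, the coaugmentation $R\to C$ is a section in $\mathrm{Mod}_R$ of the counit $\epsilon\colon C\to R$ (counits are compatible with coalgebra maps), so that $C\simeq R\oplus\overline{C}$ as $R$-modules, where $\overline{C}=\mathrm{fib}(\epsilon)$ is the coaugmentation coideal and is $2$-connective by hypothesis. Since $R$ is connective, $\otimes=\otimes_R$ preserves connectivity and connectivities add (see \cite[\S 7.2]{HA}); combined with the connectivity of $M$ and $N$, this turns any count of $\overline{C}$-factors into a connectivity bound, which we shall invoke at the end.

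Next I would use the standard description of the graded pieces of the $\mathrm{Tot}$-tower in the stable $\infty$-category $\mathrm{Mod}_R$: for any cosimplicial object $X^{\bullet}$ there is a fiber sequence
\[\Omega^{n+1}N^{n+1}(X^{\bullet})\longrightarrow\mathrm{Tot}^{n+1}(X^{\bullet})\xrightarrow{\,i_n^*\,}\mathrm{Tot}^{n}(X^{\bullet}),\]
where $N^{k}(X^{\bullet})$ is the $k$-th normalized cochain object, i.e.\ the fiber of $X^{k}$ over its $k$-th matching object (equivalently, the intersection of the kernels of the codegeneracies, or the complementary summand to the matching object under any splitting). This is dual to the classical computation of the associated graded of the skeletal filtration of a geometric realization, and can be extracted from the $\infty$-categorical Dold--Kan correspondence of \cite[\S 1.2.4]{HA}: under it, $\mathrm{Tot}^{n}$ corresponds to the realization of the stupid $\leq n$-truncation of the cochain complex attached to $X^{\bullet}$, whose top graded piece lives in cohomological degree $n+1$ and thus contributes $\Omega^{n+1}N^{n+1}(X^{\bullet})$. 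Granting this, the lemma reduces to a connectivity estimate for $N^{n+1}(\Omega_R^{\bullet}(M,C,N))$.

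I would then compute this cochain explicitly. In the two-sided cobar construction the codegeneracies $s^{i}\colon M\otimes C^{\otimes k}\otimes N\to M\otimes C^{\otimes(k-1)}\otimes N$ are induced by $\epsilon$ in the $i$-th tensor factor of $C$ (the dual of \cite[Construction 4.4.2.7]{HA}). Using $C\simeq R\oplus\overline{C}$ with $\epsilon$ the projection and the resulting decomposition $M\otimes C^{\otimes k}\otimes N\simeq\bigoplus_{S\subseteq\{1,\dots,k\}}M\otimes\overline{C}^{\otimes S}\otimes N$ (with $\overline{C}$ in the slots indexed by $S$), each $s^{i}$ acts as the identity on the summands whose $i$-th slot is $R$ and as $0$ on those whose $i$-th slot is $\overline{C}$; this grading is preserved by every $s^{i}$, so intersecting their kernels leaves precisely the top summand, and $N^{n+1}(\Omega_R^{\bullet}(M,C,N))\simeq M\otimes\overline{C}^{\otimes(n+1)}\otimes N$. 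By the first paragraph this module is $2(n+1)$-connective, hence $\Omega^{n+1}N^{n+1}(\Omega_R^{\bullet}(M,C,N))$ is $(n+1)$-connective, in particular $n$-connective, which is the claim. (For Remark~\ref{bounded}: if $M$ and $N$ are merely bounded below, finitely many suspensions reduce to this case and only sharpen the bound.)

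I expect the only genuinely nontrivial point to be the identification of $\mathrm{fib}(i_n^*)$ with $\Omega^{n+1}N^{n+1}$ in the second paragraph; the Dold--Kan route above is the cleanest, but one can also argue directly by decomposing $\mathcal{P}([n+1])$ into the nonempty subsets avoiding $n+1$ (which form the cofinal poset computing $\mathrm{Tot}^{n}$, by \cite[Lemma 1.2.4.17]{HA}) and those containing $n+1$, and identifying $\mathrm{fib}(i_n^*)$ with the total fiber of the cube so obtained. Everything else is formal.
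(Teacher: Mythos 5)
Your argument is correct, but it takes a genuinely different route from the paper's. The paper identifies $\mathrm{fib}(i_n^*)$ with $\mathrm{fib}(f_n)\otimes N$, where $f_n\colon M\to \mathrm{Tot}^n(\Omega_R^{\bullet}(M,C,C))$ is induced by the coaction, via the pullback square coming from the dual of \cite[Lemma 1.2.4.15]{HA}, and then runs an induction on $n$: the base case uses the retraction $\epsilon\circ\rho\simeq\mathrm{id}_M$ together with the splitting $C\simeq R\oplus\overline{C}$, and the inductive step needs an auxiliary diagram-chase lemma to identify the connecting map $\mathrm{fib}(f_{n-1})\to\mathrm{fib}(f_{n-1})\otimes C$ with a coaction so that the base-case argument can be rerun. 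You instead read off the associated graded of the $\mathrm{Tot}$-tower: $\mathrm{fib}(i_n^*)\simeq\Omega^{n+1}N^{n+1}$ with $N^{n+1}\simeq M\otimes\overline{C}^{\otimes(n+1)}\otimes N$, which is $2(n+1)$-connective, so the fiber is $(n+1)$-connective. This is the standard mechanism behind convergence of the cobar/descent spectral sequence; it avoids the induction and the auxiliary lemma entirely, gives the sharper bound $n+1$, and makes transparent why $2$-connectivity of $\overline{C}$ is the right hypothesis (a merely $1$-connective coideal would only give a $0$-connective fiber, independent of $n$). The one place where you should supply more detail is the identification of $N^{n+1}$: in the $\infty$-categorical setting "intersecting the kernels of the codegeneracies" has no literal meaning, and one must actually compute the fiber of the map to the matching object, e.g.\ by induction on partial matching objects, peeling off one codegeneracy at a time and using that $\mathrm{fib}\bigl(\mathrm{id}\otimes\epsilon\otimes\mathrm{id}\bigr)\simeq(-)\otimes\overline{C}\otimes(-)$ because tensoring with a fixed module preserves fibers. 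That step, rather than the fiber sequence $\Omega^{n+1}N^{n+1}\to\mathrm{Tot}^{n+1}\to\mathrm{Tot}^{n}$ (which is the completely standard dual of the skeletal filtration), is the real content of your argument; your input that the coaugmentation splits $\epsilon$, since coalgebra maps respect counits, is the same observation the paper uses in its base case.
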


\begin{proof}
    First, note that for any non-empty subset $S \subseteq [n]$, the map induced by $S \hookrightarrow S \cup \{n+1\}$ is precisely $d^{|S|+1}$ which is induced by the coaction map of the left $C$-comodule $N$.
    Hence, by the dual of \cite[Lemma 1.2.4.15]{HA} applied to $K=\mathcal{P}([n+1]\setminus\{n\})$, we obtain the following pullback
    \begin{equation}\label{eq:pullback}
    \begin{tikzcd}
\mathrm{Tot}^{n+1}(\Omega_R^{\bullet}(M,C,N)) \arrow[r,"i_n^*"] \arrow[d] \arrow[dr,phantom,"\lrcorner", very near start] & \mathrm{Tot}^n(\Omega_R^{\bullet}(M,C,N)) \arrow[d] \\
M\otimes N \arrow[r,"f_n\otimes N"] & \mathrm{Tot}^n(\Omega_R^{\bullet}(M,C,C))\otimes N,
\end{tikzcd}
   	\end{equation}
   	since $K^{\triangleleft}\cong \mathcal{P}^+([n+1]\setminus\{n\})$, $K^{\triangleleft}\times \Delta^1\cong \mathcal{P}^+([n+1])$ and $\mathcal{P}([n])$ is a finite simplicial set. 
Therefore, to prove that $i_n^*$ is $(n+1)$-connective, it suffices to prove that $f_n\otimes N$ is $(n+1)$-connective. We proceed by induction on $n$ to show that $f_n$ is $(n+1)$-connective. Since both $R$ and $N$ are connective, the desired result follows. 

For the case $n=0$, the map $f_0$ coincides with $\rho$, the coaction map of $M$. Since $\epsilon \circ \rho\simeq id_M$, it follows that the composite 
    \[\pi_*(M)\xrightarrow{\rho_*}\pi_*(M\otimes C)\cong\pi_*(M)\oplus\pi_*(M\otimes \overline{C})\xrightarrow{\epsilon_*}\pi_*(M)\]
    is the identity. Note that $\epsilon_*$ is an isomorphism when restricted to $\pi_*(M)$ as $C$ is coaugmented. Moreover, because $R$ and $M$ are connective and the coaugmentation $\overline{C}$ is $2$-connective, we deduce the induced maps $\rho_i$ are isomorphisms for all $i\leq 1$. Finally, examining the long exact sequence of homotopy groups associated to the fiber sequence $F^0\to M \xrightarrow{\rho} M\otimes C$, we conclude that the fiber $F^0$ is 0-connective.
    
 We now assume that $\mathrm{fib}(f_{n-1})$ is $(n-1)$-connective and show that $\mathrm{fib}(f_n)$ is $n$-connective. For this, consider the following commutative diagram in which every row and every column is exact:
 \begin{equation*}
    \begin{tikzcd}
\Sigma^{-1}F^n \arrow[r] \arrow[d] & 0 \arrow[r] \arrow[d] & F^n \arrow[d]\\
\mathrm{fib}(f_n) \arrow[r] \arrow[d] & M \arrow[r] \arrow[d] & \mathrm{Tot}^n\Omega_R^{\bullet}(M,C,C) \arrow[d]\\
\mathrm{fib}(f_{n-1}) \arrow[r] & M \arrow[r] & \mathrm{Tot}^{n-1}\Omega_R^{\bullet}(M,C,C).
\end{tikzcd}
   	\end{equation*}
As we noted before, since $\mathrm{fib}(f_{n-1})$ is $(n-1)$-connective, so is $F^n\simeq \mathrm{fib}(f_{n-1})\otimes C$. We claim that the map $\mathrm{fib}(f_{n-1})\to F^n$ induced by the rightmost column fiber sequence in the above diagram is equivalent to 
    \[\mathrm{fib}(f_{n-1}) \xrightarrow{\rho} \mathrm{fib}(f_{n-1})\otimes C\simeq F^n.\]
    Indeed, the natural map
    \[M\to \mathrm{Tot}^i(\Omega_R^{\bullet}(M,C,C))\to M\otimes C\xrightarrow{\epsilon}M\]
    exhibits $M$ as a summand of $\mathrm{Tot}^i(\Omega_R^{\bullet}(M,C,C))$, for $i\geq 0$. Therefore, applying the following Lemma and replacing $N$ by $C$ in diagram \ref{eq:pullback} yields the above claim. Hence an argument analogous to the $n=0$ case shows that $\mathrm{fib}(f_n)$ is $n$-connective, since by hypothesis $C$ is $2$-connective and by the induction hypothesis $\mathrm{fib}(f_{n-1})$ is $(n-1)$-connective.
\end{proof}

\begin{lem}
	Given the following commutative diagram in a stable $\infty$-category $\mathcal{C}$ such that every row and every column is exact:
 \begin{equation*}
    \begin{tikzcd}
\Sigma^{-1}F \arrow[r] \arrow[d] & 0 \arrow[r] \arrow[d] & F \arrow[d]\\
\Sigma^{-1}Y \arrow[r] \arrow[d] & X \arrow[r, "i_1"] \arrow[d, "id_X"] & X\oplus Y \arrow[d, "id_X\oplus f"]\\
\Sigma^{-1}Z \arrow[r] & X \arrow[r, "i_1"] & X\oplus Z.
\end{tikzcd}
   	\end{equation*}
   	Then the map $\Sigma^{-1}Z\to F$ induced by the rightmost vertical fiber sequence in the above diagram is equivalent to the map induced on fibers by the horizontal maps in the following diagram:
   	    \begin{equation*}
    \begin{tikzcd}
X \arrow[r, "i_1"] \arrow[d, "i_1"] & X\oplus Z \arrow[d, "id"]\\
X\oplus Y \arrow[r, "id_X\oplus f"] & X\oplus Z
\end{tikzcd}
\end{equation*}
\end{lem}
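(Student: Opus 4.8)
The plan is to observe that the summand $X$ is inert, and, after removing it, to recognise both maps in the statement as the boundary map of one and the same fibre sequence.

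First I would reduce to the case $X=0$. One may take the hypothesis $3\times 3$ diagram to be the iterated--fibre completion of its bottom--right square $\sigma=\bigl(\begin{smallmatrix} X & \xrightarrow{i_1} & X\oplus Y \\ \mathrm{id}_X\downarrow & & \downarrow\,\mathrm{id}_X\oplus f \\ X & \xrightarrow{i_1} & X\oplus Z \end{smallmatrix}\bigr)$: the left column and the top row are the levelwise fibres of the two maps of fibre sequences that $\sigma$ determines, so the whole grid is pinned down by $\sigma$ up to contractible choice, by the rigidity of fibre sequences in a stable $\infty$-category (the dual of what is used in the proof of Lemma \ref{connective}, cf. \cite[Lemma 1.2.4.15]{HA}). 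Now $\sigma$ splits in $\mathrm{Fun}(\Delta^1\times\Delta^1,\mathcal C)$ as $\sigma\simeq\sigma_X\oplus\sigma_{YZ}$, where $\sigma_X$ consists entirely of copies of $X$ and identity maps and $\sigma_{YZ}=\bigl(\begin{smallmatrix} 0 & \to & Y \\ \downarrow & & \downarrow f \\ 0 & \to & Z \end{smallmatrix}\bigr)$; hence the completed diagram splits as the sum of the completions of $\sigma_X$ and $\sigma_{YZ}$, the former being a grid of $X$'s and $0$'s. The conclusion's square $\tau=\bigl(\begin{smallmatrix} X & \xrightarrow{i_1} & X\oplus Z \\ i_1\downarrow & & \downarrow\,\mathrm{id} \\ X\oplus Y & \xrightarrow{\mathrm{id}_X\oplus f} & X\oplus Z \end{smallmatrix}\bigr)$ splits the same way, $\tau\simeq\tau_X\oplus\tau_{YZ}$ with $\tau_{YZ}=\bigl(\begin{smallmatrix} 0 & \to & Z \\ \downarrow & & \downarrow\,\mathrm{id} \\ Y & \xrightarrow{f} & Z \end{smallmatrix}\bigr)$. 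Since fibres, and the map induced on fibres by a square, commute with finite direct sums, and the $X$--summand contributes only the zero map $0\to 0$, it is enough to prove the equivalence after replacing $\sigma,\tau$ by $\sigma_{YZ},\tau_{YZ}$, i.e.\ to assume $X=0$.

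In that case the hypothesis diagram is just the iterated--fibre completion of $\sigma_{YZ}$, and its rightmost column is literally the fibre sequence $F\to Y\xrightarrow{f}Z$ with $F\simeq\mathrm{fib}(f)$; so the ``map $\Sigma^{-1}Z\to F$ induced by the rightmost vertical fibre sequence'' is, by definition, the boundary map $\partial_f$. On the other side, the map that $\tau_{YZ}$ induces on the fibres of its two horizontal arrows is, by inspection, the map $\mathrm{fib}(0\to Z)=\Sigma^{-1}Z\to\mathrm{fib}(f)=F$; writing both fibres as pullbacks, $\Sigma^{-1}Z\simeq 0\times_Z 0$ and $F\simeq Y\times_Z 0$, this is exactly the comparison of pullbacks induced by the vertical arrows $0\to Y$ and $\mathrm{id}_Z$ of $\tau_{YZ}$, which is the usual construction of $\partial_f$. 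Matching the two descriptions then proves the lemma, and reintroducing the $X$--summand changes nothing.

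The hard part is this final matching: one must see that the map of pullbacks furnished by $\tau_{YZ}$ really is the boundary map $\partial_f$, and with the correct orientation — over the homotopy category it would only be determined up to sign, so the identification has to be made at the level of the coherent data. This is a standard fact about stable $\infty$-categories (cf. \cite[\S 1.1]{HA}); once it is granted, the remainder is the bookkeeping of the direct--sum splitting above. (One should also check that the hypothesis diagram really is the canonical iterated--fibre completion, equivalently that $\sigma$ carries its canonical commutativity datum, lest a spurious self-homotopy enter; in the situation of Lemma \ref{connective} this holds because the square in question is assembled from the compatibilities among the maps $f_n$.)
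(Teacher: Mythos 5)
Your proposal is correct and follows essentially the same strategy as the paper: strip off the inert $X$-summand (the paper does this by factoring $X\to 0$ through $X\oplus Y$ and a diagram chase, you by splitting the square as a direct sum, which amounts to the same reduction) and then identify the residual map $\Sigma^{-1}Z\to F$ with the boundary map of the fibre sequence $F\to Y\xrightarrow{f}Z$, which the paper phrases as the assertion that both squares in the rotated diagram are pullbacks. Both arguments bottom out in the same standard rotation fact for fibre sequences in a stable $\infty$-category, so no further comparison is needed.
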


\begin{proof}
	Since $X\to 0$ factors through $X\to X\oplus Y\to 0$, a diagram chase shows that it suffices to prove the leftmost vertical map in the following diagram is the identity:
	   	    \begin{equation*}
    \begin{tikzcd}
F\arrow[r] \arrow[d] & Y \arrow[r, "f"] \arrow[d] & Z \arrow[d]\\
F\arrow[r] & 0 \arrow[r] & \Sigma F,
\end{tikzcd}
\end{equation*}
This is equivalent to saying that both squares in the following diagram are pullbacks:
\begin{eqnarray*}
\begin{tikzcd}
F \arrow[r] \arrow[d] \arrow[dr,phantom,"\lrcorner", very near start] & Y \arrow[r] \arrow[d] \arrow[dr,phantom,"\lrcorner", very near start] & 0 \arrow[d] \\
0 \arrow[r] & Z \arrow[r] & \Sigma F.
\end{tikzcd}	
\end{eqnarray*}
\end{proof}

Now, given connective $R$-coalgebras $A, B$, the $\infty$-category $_A\mathrm{CoMod}_B$ is complete by \cite[Corollary]{Pe22}. However, the forget functor $U:{_A\mathrm{CoMod}_B} \to \mathrm{Mod}_R$ does not preserve limits (it preserves colimits). Nevertheless, let $C$ be a connective $R$-coalgebra, $M$ be a connective $A$-$C$-bicomodule and $N$ a connective $C$-$B$-bicomodule, using Lemma \ref{com}, we can prove that the totalization of $\Omega_R^{\bullet}(M,C,N)$ in $_A\mathrm{CoMod}_B$ can be computed in the $\infty$-category $\mathrm{Mod}_R$, or equivalently in $\mathrm{Sp}$.

\begin{prop}\label{prop:suqare in LcMod}
	Let $A, B, C, M$ and $N$ be as above, the object $M\square_C N\in \mathrm{Mod}_R$ admits an $A$-$B$-bicomodule structure and is equivalent to the totalization of $\Omega_R^{\bullet}(M,C,N)$ in $_A\mathrm{CoMod}_B$.
\end{prop}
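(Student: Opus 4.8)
The plan is to produce the $A$-$B$-bicomodule structure on $M\square_C N$ by realizing it as the image, under the forgetful functor $U\colon {_A\mathrm{CoMod}_B}\to\mathrm{Mod}_R$, of the totalization computed \emph{inside} ${_A\mathrm{CoMod}_B}$; since $U$ does not preserve limits in general, the real work is to check that $U$ nevertheless creates this particular cofiltered limit. First I would observe that, because $M$ is an $A$-$C$-bicomodule and $N$ a $C$-$B$-bicomodule, the dual of \cite[Construction 4.4.2.7]{HA} upgrades $\Omega_R^{\bullet}(M,C,N)$ to a cosimplicial object of ${_A\mathrm{CoMod}_B}$ whose underlying cosimplicial object of $\mathrm{Mod}_R$ is the one defining $M\square_C N$. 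As ${_A\mathrm{CoMod}_B}$ is complete by \cite[Corollary]{Pe22}, the totalization $T:=\mathrm{Tot}_{{_A\mathrm{CoMod}_B}}(\Omega_R^{\bullet}(M,C,N))$ exists there, and the whole task reduces to the identification $U(T)\simeq M\square_C N$.

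Next I would pass to a one-sided comodule category via the equivalence ${_A\mathrm{CoMod}_B}\simeq{_{D}\mathrm{CoMod}}$ with $D:=A\otimes B^{\mathrm{op}}$ (as in Lemma \ref{lem: sq C}; see \cite[\S 4.6.3]{HA}), noting that $D$ is connective. The operative fact, dual to the results of \cite[\S 4.2.3]{HA} and used in exactly this spirit in Lemma \ref{lem: sq C}, is that $U\colon{_D\mathrm{CoMod}}\to\mathrm{Mod}_R$ preserves any limit of a diagram in ${_D\mathrm{CoMod}}$ whose underlying diagram in $\mathrm{Mod}_R$ has a limit that is preserved by $-\otimes D$. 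By the $\infty$-categorical Dold--Kan correspondence applied in the complete $\infty$-category ${_D\mathrm{CoMod}}$, the totalization $T$ is the limit of the tower of partial totalizations $\mathrm{Tot}^n$ computed in ${_D\mathrm{CoMod}}$. Here I would use the key simplification that each $\mathrm{Tot}^n$ is a \emph{finite} limit — over the finite poset $\mathcal{P}([n])$, via \cite[Lemma 1.2.4.17]{HA} as recalled above — so it is preserved by the exact functor $-\otimes D$ and hence by $U$; thus $U$ carries this tower to the usual tower $\bigl(\mathrm{Tot}^n(\Omega_R^{\bullet}(M,C,N))\bigr)_n$ in $\mathrm{Mod}_R$, whose limit is $M\square_C N$.

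It then remains to verify that $-\otimes D$ preserves the single cofiltered limit $\varprojlim_n \mathrm{Tot}^n(\Omega_R^{\bullet}(M,C,N))=M\square_C N$. For this I would combine Lemma \ref{connective}, which gives that the fiber of $\mathrm{Tot}^{n+1}(\Omega_R^{\bullet}(M,C,N))\to\mathrm{Tot}^{n}(\Omega_R^{\bullet}(M,C,N))$ is $n$-connective, with Lemma \ref{com} applied to $Y=D$, to conclude $D\otimes(M\square_C N)\simeq\varprojlim_n\bigl(D\otimes\mathrm{Tot}^n(\Omega_R^{\bullet}(M,C,N))\bigr)$. Feeding this into the creation property above shows that $U$ preserves $T$, i.e.\ $U(T)\simeq M\square_C N$; this simultaneously endows $M\square_C N$ with the $A$-$B$-bicomodule structure of $T$ and identifies it with the totalization computed in ${_A\mathrm{CoMod}_B}$.

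The main obstacle is precisely the failure of $U$ to preserve general limits, which is why the cotensor product cannot simply be transported from $\mathrm{Mod}_R$ to the bicomodule category for free: the heart of the argument is to promote the connectivity estimate of Lemma \ref{connective} through Lemma \ref{com} so that $-\otimes D$ preserves this one cofiltered limit, and this is exactly where the $2$-connectivity of the coaugmentation coideal of $C$ (or boundedness below after finitely many suspensions, as in Remark \ref{bounded}) is used. Once one notices that each partial totalization is a finite limit — hence automatically respected by $U$ — everything else is routine connectivity bookkeeping.
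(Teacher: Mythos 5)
Your argument is correct and rests on exactly the same analytic input as the paper's proof --- the observation that each $\mathrm{Tot}^n$ is a finite limit over $\mathcal{P}([n])$, the connectivity estimate of Lemma \ref{connective}, and Lemma \ref{com} to commute $-\otimes L$ past the cofiltered limit --- but the formal endgame is packaged differently. You realize the totalization inside ${_A\mathrm{CoMod}_B}\simeq{_{A\otimes B^{\mathrm{op}}}}\mathrm{CoMod}$ and argue that the forgetful functor creates this particular limit because $D\otimes -$ preserves it (the dual of \cite[Corollary 4.2.3.5]{HA}, exactly as in the proof of Lemma \ref{lem: sq C}); the paper instead restricts to $(\mathrm{Mod}_R^{\mathrm{cn}})^{\mathrm{op}}$, verifies that tensoring with \emph{every} object of that subcategory preserves the (co)limit, and invokes the operadic colimit criterion \cite[Proposition 3.1.1.16]{HA} together with \cite[Proposition 4.4.2.5]{HA} to produce the bicomodule structure. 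Your route buys a slightly more economical hypothesis (preservation by $D\otimes-$ rather than by all connective $X\otimes-$), while the paper's route avoids having to make the comonadic creation statement precise at the diagram level. One small point to tighten: to transport the bicomodule structure you need preservation not just by $D\otimes-$ but by all its iterates $D^{\otimes n}\otimes-$ (equivalently, by $X\otimes-$ for the relevant connective $X$, which is what the paper checks); this is immediate here since every $D^{\otimes n}$ is connective and Lemma \ref{com} applies verbatim, but it should be said.
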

	
\begin{proof}
    In Lemma \ref{com}, let $X = \{\mathrm{Tot}^n(\Omega_R^{\bullet}(M,C,N))\}_{n\geq 0}$. We obtain 
    \[ L \otimes (M\square_C N) \cong (L\otimes M)\square_C N,\ \forall L\in \mathrm{Mod}_R^{\mathrm{cn}},\]
    since each $\mathrm{Tot}^n(\Omega_R^{\bullet}(M,C,N))$ is equivalent to a finite limit and the fiber of the natural map 
    \[\mathrm{Tot}^{n+1}(\Omega_R^{\bullet}(M,C,N)) \to \mathrm{Tot}^n(\Omega_R^{\bullet}(M,C,N))\]
    is $n$-connective by Lemma \ref{connective}. Note that, this also implies that $(L\otimes M)\square_C N$ is connective for any connective $R$-module $L$. Now, since $A$ and $B$ are connective, we can apply \cite[Proposition 3.1.1.16]{HA} to $\mathcal{C}=(\mathrm{Mod}_R^{\mathrm{cn}})^{\mathrm{op}}$  and $X=\langle 1 \rangle \in \mathrm{N}(\mathrm{Fin}_*)$, which implies that $M\square_C N$ is an operadic colimit in the symmetric monoidal $\infty$-category $(\mathrm{Mod}_R^{\mathrm{cn}})^{\mathrm{op}}$. Hence, it follows from \cite[Proposition 4.4.2.5]{HA} that $M\square_C N$ is the geometric realization of $\Omega_R^{\bullet}(M,C,N)$ in the $\infty$-category $(\mathrm{Mod}_R^{\mathrm{cn}})^{\mathrm{op}}$. The result is obtained by passing to the opposite category.
\end{proof}

As a corollary, we have the following base change adjunction for comodules.

\begin{cor}
	Let $f:C\to D$ be a morphism of connective $R$-coalgebras with $D$ an object of $\mathrm{CoAlg}(\mathrm{Mod}_R)_{R/}^{\geq 2}$. There exists an adjoint pair
	\[f_* \colon \mathrm{CoMod}_C(\mathrm{Mod}_R) \leftrightarrows \mathrm{CoMod}_D(\mathrm{Mod}_R) \colon -\square_D C,\]
	where $f_*$ is the corestriction of scalars functor.
\end{cor}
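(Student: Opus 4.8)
The plan is to realize this adjunction as the opposite of the usual base-change adjunction for modules, using Proposition \ref{prop:suqare in LcMod} to get around the fact that $\otimes_R$ does not preserve totalizations separately in each variable. First I would pass to opposite $\infty$-categories: the coalgebra map $f\colon C\to D$ becomes a morphism $g\colon D\to C$ of associative algebras in $\mathrm{Mod}_R^{\mathrm{op}}$, and under the identification $\mathrm{CoMod}_E(\mathrm{Mod}_R)\simeq \mathrm{Mod}_E(\mathrm{Mod}_R^{\mathrm{op}})^{\mathrm{op}}$ the corestriction of scalars $f_*$ is identified with the opposite of restriction of scalars $g^{*}$, while $-\square_D C$ — with $C$ regarded as the $D$-$C$-bicomodule whose left $D$-coaction is $C\xrightarrow{\Delta}C\otimes C\xrightarrow{f\otimes\mathrm{id}}D\otimes C$ and whose right $C$-coaction is $\Delta$ — is identified with the opposite of the extension-of-scalars functor $C\otimes_D(-)$. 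Thus it suffices to produce the adjunction $C\otimes_D(-)\dashv g^{*}$ between the relevant module categories over $\mathrm{Mod}_R^{\mathrm{op}}$ and take opposites.

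The heart of the matter is that the hypothesis of the standard base-change theorem \cite[\S 4.6.2]{HA} — that the ambient tensor product preserve geometric realizations separately in each variable — fails for $\mathrm{Mod}_R^{\mathrm{op}}$, so one cannot quote it off the shelf; this is exactly what the connectivity assumptions repair. Applying Proposition \ref{prop:suqare in LcMod} with the middle coalgebra equal to $D\in\mathrm{CoAlg}(\mathrm{Mod}_R)_{R/}^{\geq 2}$, with $A=\mathbf 1$, $B=C$, the connective bicomodule $C$ in one slot and a connective $D$-comodule $N$ in the other, one learns that $N\square_D C$ carries a natural $C$-comodule structure and is computed by the totalization of $\Omega_R^{\bullet}(N,D,C)$ already in $\mathrm{Mod}_R$; dually, $C\otimes_D N$ exists in $\mathrm{Mod}_C(\mathrm{Mod}_R^{\mathrm{op}})$ and equals the bar-construction geometric realization, formed on underlying objects in $(\mathrm{Mod}_R^{\mathrm{cn}})^{\mathrm{op}}$. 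This is precisely the input the construction of the left adjoint to $g^{*}$ requires: the relevant operadic colimits exist and are preserved by tensoring in each variable (this is what Lemmas \ref{com} and \ref{connective} supply, exactly as in the proof of Proposition \ref{prop:suqare in LcMod}), and both functors restrict to the connective comodule categories — $g^{*}$ trivially, since it is the identity on underlying $R$-modules, and $C\otimes_D(-)$ because $N\square_D C$ is connective when $N$ is. Running the argument of \cite[\S 4.6.2]{HA} inside these subcategories then yields $f_*\dashv (-\square_D C)$. To extend from connective to bounded-below comodules, as permitted by Remark \ref{bounded}, I would shift: if $N$ is $(-m)$-connective then $\Sigma^m N$ is a connective $D$-comodule, $f_*$ commutes with $\Sigma^m$ on the nose, and $\Sigma^m(N\square_D C)\simeq (\Sigma^m N)\square_D C$ because the totalization tower of $\Omega_R^{\bullet}(N,D,C)$ is eventually constant in each homotopy group by Lemma \ref{connective}; desuspending (a limit, hence computed on underlying objects in the complete $\infty$-category $\mathrm{CoMod}_C$) transports the adjunction.

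I expect the crux to be the second step: one must verify that the proof of \cite[\S 4.6.2]{HA} invokes its global hypothesis only through the existence and tensor-stability of the bar/cobar constructions at hand, so that the connectivity of $D$, of $C$, and of the comodules — packaged in Proposition \ref{prop:suqare in LcMod}, which itself rests on Lemmas \ref{com} and \ref{connective} — is a sufficient substitute. The remaining ingredients, namely the opposite-category bookkeeping and the suspension argument, are formal.
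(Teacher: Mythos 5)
Your proposal is correct and follows essentially the same route as the paper: the paper's entire proof consists of observing that Proposition \ref{prop:suqare in LcMod} makes $-\square_D C$ well-defined (i.e., valued in $\mathrm{CoMod}_C(\mathrm{Mod}_R)$ and computed on underlying $R$-modules) and then invoking the standard base-change adjunction ``as in the algebraic case.'' Your elaboration of that second step --- dualizing the extension/restriction-of-scalars adjunction and checking that the connectivity hypotheses of Lemmas \ref{com} and \ref{connective} substitute for tensor-compatibility with geometric realizations --- together with your explicit restriction to connective (then bounded-below) comodules, is a more careful version of the same argument rather than a different one.
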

\begin{proof}
	By Proposition \ref{prop:suqare in LcMod},  the functor $-\square_D C$ is well-defined. The remaining proof proceeds completely analogously to the algebraic case. 
\end{proof}

By naturality of cotensor product and Proposition \ref{prop:suqare in LcMod}, we obtain the following commutative diagram
\begin{equation*}
    \begin{tikzcd}
       \mathrm{BiCoMod}(\mathrm{Mod}_R^{\mathrm{cn}})\times_{\mathrm{CoAlg}(\mathrm{Mod}_R)_{R/}^{\geq 2}}\mathrm{BiCoMod}(\mathrm{Mod}_R^{\mathrm{cn}})\arrow[r,"\square"] \arrow[d, "U"] & \mathrm{BiCoMod}(\mathrm{Mod}_R) \arrow[d, "U"] \\
       \mathrm{RCoMod}_C(\mathrm{Mod}_R)\times_{\mathrm{CoAlg}(\mathrm{Mod}_R)} \mathrm{LCoMod}(\mathrm{Mod}_R) \arrow[r,"\square"] & \mathrm{Mod}_R,
\end{tikzcd}
\end{equation*}
where $U$ represents the forgetful functors to the respective categories. Both $\infty$-categories appearing in the upper horizontal part of the above diagram are symmetric monoidal by \cite[Propositon 3.2.4.3]{HA}. Hence, our next objective is to prove that the functor $\square$ is symmetric monoidal. 

\begin{lem}\label{i=1,2}
    Let $A_i, C_i$ and $B_i$ be objects of $\mathrm{CoAlg}(\mathrm{Mod}_R)_{R/}^{\geq 2}$ for $i=1,2$. Given connective $A_i$-$C_i$-bomodules $M_i$ and $C_i$-$B_i$-bimodules $N_i$ for $i=1,2$, we have an equivalence of $(A_1\otimes A_2)$-$(B_1\otimes B_2)$-bimodules
    \begin{equation}\label{eq:sym}
    	(M_1\square_{C_1} N_1)\otimes (M_2\square_{C_2} N_2)\simeq (M_1\boxtimes M_2) \square_{(C_1\otimes C_2)} (N_1\boxtimes N_2).
    \end{equation}
\end{lem}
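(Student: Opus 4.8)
The plan is to prove \eqref{eq:sym} first as an equivalence of underlying $R$-modules, by a Fubini-type manipulation of totalizations, and then to promote it to an equivalence of bicomodules by running the same manipulation inside $(\mathrm{Mod}_R^{\mathrm{cn}})^{\mathrm{op}}$, in the spirit of the proof of Proposition \ref{prop:suqare in LcMod}.

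For the module-level statement I would start from the right-hand side. Permuting tensor factors by the symmetry of $\otimes=\otimes_R$ gives, for each $n\geq 0$, an equivalence
\[
(M_1\boxtimes M_2)\otimes (C_1\otimes C_2)^{\otimes n}\otimes (N_1\boxtimes N_2)\;\simeq\;\bigl(M_1\otimes C_1^{\otimes n}\otimes N_1\bigr)\otimes\bigl(M_2\otimes C_2^{\otimes n}\otimes N_2\bigr),
\]
and the key (if tedious) point is that these are compatible with all cofaces and codegeneracies, since the comultiplication on $C_1\otimes C_2$, the coactions on $M_1\boxtimes M_2$ and $N_1\boxtimes N_2$, and the counit of $C_1\otimes C_2$ are the tensor-product structures. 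Writing $Z^{p,q}:=(M_1\otimes C_1^{\otimes p}\otimes N_1)\otimes(M_2\otimes C_2^{\otimes q}\otimes N_2)$ for the resulting bicosimplicial $R$-module, this identifies $\Omega_R^{\bullet}(M_1\boxtimes M_2,\,C_1\otimes C_2,\,N_1\boxtimes N_2)$ with the diagonal of $Z^{\bullet,\bullet}$. Since $N(\Delta)^{\mathrm{op}}$ is sifted (see \cite[\S 5.5.8]{HTT}), the diagonal $\Delta\to\Delta\times\Delta$ is initial, so, together with the commutation of iterated limits, $(M_1\boxtimes M_2)\,\square_{C_1\otimes C_2}\,(N_1\boxtimes N_2)\simeq\lim_{[n]\in\Delta}Z^{n,n}\simeq\lim_{[p]\in\Delta}\lim_{[q]\in\Delta}Z^{p,q}$. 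On the left-hand side, by $\infty$-categorical Dold--Kan each $M_i\square_{C_i}N_i$ is the limit of the tower $\{\mathrm{Tot}^n\Omega_R^{\bullet}(M_i,C_i,N_i)\}_n$, whose transition maps have $n$-connective fibres by Lemma \ref{connective}. Since $M_2\square_{C_2}N_2$ is connective (as noted in the proof of Proposition \ref{prop:suqare in LcMod}) and each $M_1\otimes C_1^{\otimes p}\otimes N_1$ is connective, two applications of Lemma \ref{com}, first with $Y=M_2\square_{C_2}N_2$ and then, for fixed $p$, with $Y=M_1\otimes C_1^{\otimes p}\otimes N_1$, together with the exactness of $-\otimes Y$ (which lets it pass through the finite limits $\mathrm{Tot}^n$), yield a natural chain of equivalences
\[
(M_1\square_{C_1}N_1)\otimes(M_2\square_{C_2}N_2)\;\simeq\;\lim_{[p]}\Bigl((M_1\otimes C_1^{\otimes p}\otimes N_1)\otimes(M_2\square_{C_2}N_2)\Bigr)\;\simeq\;\lim_{[p]}\lim_{[q]}Z^{p,q},
\]
which matches the previous identification and proves \eqref{eq:sym} on underlying $R$-modules.

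To upgrade to bicomodules I would recall from the proof of Proposition \ref{prop:suqare in LcMod} that in $(\mathrm{Mod}_R^{\mathrm{cn}})^{\mathrm{op}}$ every cotensor product appearing here is an operadic colimit, namely the geometric realization of the corresponding two-sided bar construction regarded as a simplicial object of $\mathrm{BiMod}((\mathrm{Mod}_R^{\mathrm{cn}})^{\mathrm{op}})$; for $(M_1\boxtimes M_2)\square_{C_1\otimes C_2}(N_1\boxtimes N_2)$ this uses that $C_1\otimes C_2$ still lies in $\mathrm{CoAlg}(\mathrm{Mod}_R)_{R/}^{\geq 2}$, its coaugmentation coideal being $\overline{C_1}\oplus\overline{C_2}\oplus(\overline{C_1}\otimes\overline{C_2})$, and that $M_1\boxtimes M_2$, $N_1\boxtimes N_2$ are connective. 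Because the monoidal structure on $(\mathrm{Mod}_R^{\mathrm{cn}})^{\mathrm{op}}$ is compatible with these geometric realizations and the forgetful functor from $\mathrm{BiMod}((\mathrm{Mod}_R^{\mathrm{cn}})^{\mathrm{op}})$ preserves them, the chain of equivalences above can be constructed verbatim one level up (with Lemma \ref{com} replaced by the commutation of these geometric realizations with $\otimes$, and siftedness used as before), so the equivalence already holds in $\mathrm{BiMod}((\mathrm{Mod}_R^{\mathrm{cn}})^{\mathrm{op}})$; dualizing and applying Proposition \ref{prop:suqare in LcMod} once more exhibits it as an equivalence of $(A_1\otimes A_2)$-$(B_1\otimes B_2)$-bicomodules.

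I expect the main obstacle to be coherence bookkeeping rather than anything analytic: one must verify that the permutation-of-factors identification is genuinely an equivalence of (co)simplicial objects \emph{in the bicomodule $\infty$-category}, respecting faces, degeneracies and all bicomodule structure maps, and that every connectivity hypothesis needed for Lemmas \ref{com} and \ref{connective} and for Proposition \ref{prop:suqare in LcMod} --- in particular for the tensor-product coalgebra $C_1\otimes C_2$ and its comodules --- is in force. Once $Z^{\bullet,\bullet}$ and its bicomodule refinement are set up correctly, the manipulation of totalizations itself is formal.
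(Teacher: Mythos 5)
Your proposal is correct and follows essentially the same route as the paper's proof: commute the tensor product past the totalizations using Lemma \ref{com} (whose hypotheses are supplied by Lemma \ref{connective} and the connectivity of the comodules), identify the resulting double limit with the limit over the diagonal via cosiftedness of $\Delta$, recognize the diagonal as the cobar construction for $C_1\otimes C_2$, and promote the equivalence to bicomodules via Proposition \ref{prop:suqare in LcMod}. The only difference is cosmetic --- you run the chain from the right-hand side and make the bicosimplicial object explicit, whereas the paper starts from the left.
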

\begin{proof}
By Lemma \ref{connective}, the fiber of 
\[i_n^*:\mathrm{Tot}^{n+1}(\Omega_R^{\bullet}(M_i,C_i,N_i)) \to \mathrm{Tot}^n(\Omega_R^{\bullet}(M_i,C_i,N_i)),\ \ i=0,1\]
 is $n$-connective. Hence, Lemma \ref{com} shows that for any connective $R$-module $N$ and the natural map 
   	 \[N\otimes (M_i\square_C N_i) \to \varprojlim_{n} (N\otimes \mathrm{Tot}^n(\Omega_R^{\bullet}(M_i,C_i,N_i)))\]
   	 is an equivalence. Equivalently, this can be formulated as an equivalence of totalizations of cosimplicial objects:
   	 \[N\otimes \varprojlim_{[k]\in\Delta}\Omega^k_R(M_i,C_i,N_i)\xrightarrow{\simeq} \varprojlim_{[k]\in\Delta}\Omega^k_R(N\otimes M_i,C_i,N_i)\]
   	 Since $\mathrm{Tot}^0(\Omega_R^{\bullet}(M_i,C_i,N_i)))=M_i \otimes N_i$ is connective, we deduce that $\mathrm{Tot}^n(\Omega_R^{\bullet}(M_i,C_i,N_i))$ and $M_i\square_{C_i} N_i$ are connective, by induction and the Milnor exact sequence. Now, the above discussion shows that 
    \begin{equation*}
    	\begin{split}
    		&(M_1\square_{C_1} N_1)\otimes (M_2\square_{C_2} N_2)\\
    		 & \simeq \varprojlim_{[k]\in\Delta} \Omega_R^k(M_1,C_1,N_1) \otimes \varprojlim_{[l]\in\Delta} \Omega_R^l(M_2,C_2,N_2)\\	
    		 & \simeq \varprojlim_{[k]\in\Delta} [\Omega_R^k(M_1,C_1,N_1) \otimes \varprojlim_{[l]\in\Delta} (\Omega_R^l(M_2,C_2,N_2)]\\ 
    		 & \simeq \varprojlim_{[k], [l]\in\Delta} \Omega_R^k(M_1,C_1,N_1) \otimes \Omega_R^l(M_2,C_2,N_2),\\
    	   \end{split}
          \end{equation*} 
    since $\Delta$ is cosifted, the above is equivalent to     	
     \begin{equation*}
    	\begin{split} 
    		& \varprojlim_{[k]\in\Delta} \Omega_R^k(M_1,C_1,N_1)) \otimes \Omega_R^k(M_2,C_2,N_2)\\
    		& \simeq \varprojlim_{[k]\in\Delta} \Omega_R^k(M_1\otimes M_2,C_1\otimes C_2,N_1\otimes N_2) \\
    		& \simeq (M_1\boxtimes M_2) \square_{(C_1\otimes C_2)} (N_1\boxtimes N_2).
    	\end{split}
    \end{equation*}
    By Proposition \ref{prop:suqare in LcMod}, this equivalence is an equivalence of $(A_1\otimes A_2)$-$(B_1\otimes B_2)$-bimodules.
\end{proof}

\begin{proof}[proof of Theorem \ref{th: sym}]
    Let $A_i,\ C_i$ and $B_i$ be $R$-coalgebras for $1\leq i\leq n$. By the definition of cotensor product, for each $n\geq 1$, there is a natural comparison map
    \[\boxtimes_{i=1}^n (M_i\square_{C_i} N_i) \to (\boxtimes_{i=1}^n M_i)\square_{(\boxtimes_{i=1}^n C_i)}(\boxtimes_{i=1}^n N_i),\]
    where $M_i$ are connective $A_i$-$C_i$-comodules and $N_i$ are connective $C_i$-$B_i$-comodules for $1\leq i \leq n$.  Moreover, we can deduce from Proposition \ref{prop:suqare in LcMod} that both sides of the above map belongs to ${_{\otimes A_i}}\mathrm{CoMod}_{\otimes B_i}$, if $C_i\in \mathrm{CoAlg}(\mathrm{Mod}_R)_{R/}^{\geq 2}$ for $1\leq i\leq n$. In other words, $\square$ is well-defined and colax symmetric monoidal. Hence, it remains to prove that this map is an equivalence. To see this, unwinding the definitions and proceeding by induction, it suffices to prove that 
	\[(M_1\square_{C_1} N_1)\boxtimes (M_2\square_{C_2} N_2)\simeq (M_1\boxtimes M_2) \square_{(C_1\otimes C_2)} (N_1\boxtimes N_2)\]
	in the $\infty$-category ${_{A_1\otimes A_2}}\mathrm{CoMod}_{B_1\otimes B_2}$, which follows from Lemma \ref{i=1,2}.
\end{proof}

Now, given maps $A\to C$ and $B\to C$ of coalgebras, we can regard $A$ as an right $C$-comodule and $B$ as a left $C$-comodule, by composing with the corresponding mappings (see \cite[Corollary 4.2.3.2]{HA}). In general, $A\square_C B$ constitutes merely an object in the $\infty$-category $\mathrm{Mod}_R$. However, the following Proposition shows that under certain conditions, $C$ admits higher comultiplicative structures.

\begin{prop}\label{pullback}
	Let $C$ be an object of $\mathrm{CoCAlg}(\mathrm{Mod}_R)_{R/}^{\geq 2}$ and $A,B\in\mathrm{CoCAlg}(\mathrm{Mod}_R^{\mathrm{cn}})$, then the pullback of the following cospan
	\begin{equation*}
    \begin{tikzcd}
        A \arrow[r] & C & B \arrow[l]
    \end{tikzcd}
    \end{equation*}
    in the $\infty$-category $\mathrm{CoCAlg}(\mathrm{Mod}_R)$ is $A\square_C B$.
\end{prop}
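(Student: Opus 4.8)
The plan is to identify $A\,\square_C\,B$ with the pullback $A\times_C B$ in $\mathrm{CoCAlg}(\mathrm{Mod}_R)$ by exhibiting the two-sided cobar construction $\Omega_R^{\bullet}(A,C,B)$ as the \v{C}ech conerve (cosimplicial cobar) computing this pullback, and then checking that the totalization is computed correctly in $\mathrm{CoCAlg}(\mathrm{Mod}_R)$ rather than merely in $\mathrm{Mod}_R$. Dually to the fact that for commutative algebras $A\otimes_C B$ is the pushout (an operadic colimit computed by the bar construction $\mathrm{Bar}_C(A,B)_\bullet$), the cocommutative coalgebra pullback $A\times_C B$ is an operadic limit computed by the cobar cosimplicial object $\Omega_R^{\bullet}(A,C,B)$ in $\mathrm{CoCAlg}(\mathrm{Mod}_R) = \mathrm{CAlg}((\mathrm{Mod}_R)^{\mathrm{op}})^{\mathrm{op}}$. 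So the first step is to invoke the dual of \cite[Proposition 4.4.2.5]{HA} (together with \cite[Proposition 3.1.1.16]{HA}, exactly as in the proof of Proposition \ref{prop:suqare in LcMod}) to say that the pushout of $A \leftarrow C \to B$ in $\mathrm{CAlg}((\mathrm{Mod}_R^{\mathrm{cn}})^{\mathrm{op}})$ is the geometric realization of $\mathrm{Bar}^{\bullet}$, i.e.\ the pullback of $A \to C \leftarrow B$ in $\mathrm{CoCAlg}(\mathrm{Mod}_R^{\mathrm{cn}})$ is $\mathrm{Tot}$ of $\Omega_R^{\bullet}(A,C,B)$ computed in $\mathrm{CoCAlg}(\mathrm{Mod}_R^{\mathrm{cn}})$.

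The second step is to reconcile the limit computed in $\mathrm{CoCAlg}$ with the one in $\mathrm{Mod}_R$: since $C \in \mathrm{CoCAlg}(\mathrm{Mod}_R)_{R/}^{\geq 2}$ and $A$, $B$ are connective, Proposition \ref{prop:suqare in LcMod} (applied with the coalgebras $A$, $B$, $C$ in the roles there, regarding $A$ as an $R$-$C$-bicomodule and $B$ as a $C$-$R$-bicomodule) tells us that $A\,\square_C\,B$, formed as $\mathrm{Tot}_{\mathrm{Mod}_R}(\Omega_R^{\bullet}(A,C,B))$, already carries the structure of the totalization in the relevant comodule category and is connective. One then needs that the forgetful functor $\mathrm{CoCAlg}(\mathrm{Mod}_R) \to \mathrm{Mod}_R$ detects the limit in question. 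This is where the operadic-colimit packaging from \cite[Proposition 3.1.1.16, Proposition 4.4.2.5]{HA} does the work: because each $\mathrm{Tot}^n$ is a finite limit and the fibers $\mathrm{Tot}^{n+1} \to \mathrm{Tot}^n$ are $n$-connective (Lemma \ref{connective}), the cosimplicial diagram is an operadic limit diagram in $(\mathrm{Mod}_R^{\mathrm{cn}})^{\mathrm{op}}$, hence its totalization computes the coalgebra-level limit and the forgetful functor preserves it.

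The third step is bookkeeping: check that the comparison map from $A\,\square_C\,B$ (with its cocommutative coalgebra structure coming from Step 1--2) to the pullback $A \times_C B$ formed abstractly in $\mathrm{CoCAlg}(\mathrm{Mod}_R)$ is an equivalence. Both are totalizations of the same cosimplicial object — on one side in $\mathrm{CoCAlg}(\mathrm{Mod}_R^{\mathrm{cn}})$, on the other in $\mathrm{CoCAlg}(\mathrm{Mod}_R)$ — and since $\mathrm{CoCAlg}(\mathrm{Mod}_R^{\mathrm{cn}}) \to \mathrm{CoCAlg}(\mathrm{Mod}_R)$ preserves limits and the relevant objects are connective, the two agree. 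I expect the main obstacle to be Step 2: carefully justifying that the totalization of $\Omega_R^{\bullet}(A,C,B)$, a priori only a limit in $\mathrm{Mod}_R$, genuinely computes a limit in the (non-presentable-limit-friendly) category $\mathrm{CoCAlg}(\mathrm{Mod}_R)$ — i.e.\ that no information is lost in the forgetful functor. This rests on the connectivity estimates of Lemma \ref{connective} and the operadic-(co)limit identification, exactly parallel to Proposition \ref{prop:suqare in LcMod}, so the template is in place; one just has to phrase it for the cocommutative (rather than bicomodule) context and invoke the Cartesian symmetric monoidal structure on $\mathrm{CoCAlg}$ recorded via \cite[Proposition 3.2.4.7]{HA}.
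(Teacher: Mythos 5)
Your overall strategy is genuinely different from the paper's. You want to exhibit $A\square_C B$ as the totalization of the cobar construction taken \emph{inside} $\mathrm{CoCAlg}(\mathrm{Mod}_R)$, with the forgetful functor detecting that limit via the operadic-(co)limit machinery; the paper instead first produces the cocommutative structure by applying $\mathrm{CoAlg}_{\mathbb{E}_\infty}(-)$ to the symmetric monoidal functor $\square\colon\mathcal{D}\to\mathrm{Mod}_R$ of Theorem \ref{th: sym} (using Dunn additivity to identify $\mathrm{CoAlg}_{\mathbb{E}_\infty}\mathcal{D}$ with cospans of cocommutative coalgebras), and then verifies the universal property of the pullback \emph{by hand}, constructing unit and counit maps $\eta_D\colon D\simeq D\square_D D\to \square(\underline{D})$ and $\epsilon_F\colon\underline{\square(F)}\to F$ and checking the triangle identities. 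Your first two steps (the cobar cosimplicial object computes the pullback since each level is a product in $\mathrm{CoCAlg}$; the connectivity estimates of Lemma \ref{connective} make the diagram an operadic limit diagram in $(\mathrm{Mod}_R^{\mathrm{cn}})^{\mathrm{op}}$) are sound in spirit and would give the statement with $\mathrm{CoCAlg}(\mathrm{Mod}_R^{\mathrm{cn}})$ in place of $\mathrm{CoCAlg}(\mathrm{Mod}_R)$.

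The gap is in Step 3, and it is exactly the point the paper's unit/counit argument is designed to handle. Lemma \ref{com} only gives $Y\otimes\mathrm{Tot}(\Omega_R^{\bullet}(A,C,B))\simeq\mathrm{Tot}(Y\otimes\Omega_R^{\bullet}(A,C,B))$ for \emph{connective} (or bounded below) $Y$; for $Y$ unbounded below the error term $\varprojlim_n(Y\otimes F_n)$ need not vanish. So the cosimplicial diagram is an operadic limit diagram only in $(\mathrm{Mod}_R^{\mathrm{cn}})^{\mathrm{op}}$, not in $(\mathrm{Mod}_R)^{\mathrm{op}}$, and your argument only establishes the universal property of $A\square_C B$ against \emph{connective} test coalgebras $D$. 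Your assertion that the inclusion $\mathrm{CoCAlg}(\mathrm{Mod}_R^{\mathrm{cn}})\to\mathrm{CoCAlg}(\mathrm{Mod}_R)$ ``preserves limits'' is precisely what needs proof and is not automatic: a limit computed in a full subcategory need not remain one in the ambient category, and this inclusion has no evident left adjoint ($\tau_{\geq 0}$ is lax, not oplax, symmetric monoidal, so there is no connective cover for coalgebras). The paper gets around this by testing against an \emph{arbitrary} $D\in\mathrm{CoCAlg}(\mathrm{Mod}_R)$ and using only the fact that $\Omega_R^{\bullet}(D,D,D)$ is a \emph{split} cosimplicial object --- so its totalization, and the coalgebra structure on $D\square_D D\simeq D$, require no connectivity hypothesis on $D$ whatsoever --- to build the comparison map $D\simeq D\square_D D\xrightarrow{\alpha(0)\square_{\alpha(2)}\alpha(1)}A\square_C B$ and check it is an equivalence on mapping spaces. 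To complete your proof you would either need to supply this missing argument for non-connective $D$, or weaken the conclusion to a pullback in $\mathrm{CoCAlg}(\mathrm{Mod}_R^{\mathrm{cn}})$ (which would not suffice for the later applications, e.g.\ Proposition \ref{prop:S^1}).

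One smaller bookkeeping point: the references you lean on ([HA, Proposition 4.4.2.5] and [HA, Proposition 3.1.1.16]) concern relative tensor products over associative algebras in module categories; transporting them to pushouts in $\mathrm{CAlg}((\mathrm{Mod}_R^{\mathrm{cn}})^{\mathrm{op}})$ additionally requires (i) the general fact that the two-sided bar construction on coproducts resolves the pushout of commutative algebras, and (ii) a diagram-local version of the statement that the forgetful functor from $\mathbb{E}_\infty$-algebras detects sifted colimits. Both are true but neither is literally the cited result, so they would need to be spelled out.
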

\begin{proof}
	First, recall that $\mathrm{CoAlg}(\mathrm{Mod}_R)$ inherits a symmetric monoidal structure, where the tensor product of two algebras is given by their tensor product in $\mathrm{Mod}_R$, by \cite[Example 3.2.4.4]{HA}. Since tensor product over $R$ preserves connectivity and tensor product of two coaugmented coalgebras is also augmented, $\mathrm{CoAlg}(\mathrm{Mod}_R^{\mathrm{cn}})$ and $\mathrm{CoAlg}(\mathrm{Mod}_R)_{R/}^{\geq 2}$ are symmetric monoidal subcategories of $\mathrm{CoAlg}(\mathrm{Mod}_R)$. Therefore, the full subcategory 
	\[\mathcal{D} \subset \mathrm{CoAlg}(\mathrm{Mod}_R)^{\Lambda^2_2}\]
	spanned by cospans $F \colon \Lambda^2_2 \to \mathrm{CoAlg}(\mathrm{Mod}_R^{\mathrm{cn}})$ with $F(0), F(1) \in \mathrm{CoAlg}(\mathrm{Mod}_R^{\mathrm{cn}})$ and $F(2) \in \mathrm{CoAlg}(\mathrm{Mod}_R)_{R/}^{\geq 2}$ inherits a symmetric monoidal structure from $\mathrm{CoAlg}(\mathrm{Mod}_R)^{\Lambda^2_2}$ with the pointwise tensor product. Moreover, Theorem \ref{th: sym} implies that the functor
	\begin{equation}\label{fun: squ}
			\square: \mathcal{D}\to \mathrm{Mod}_R
	\end{equation}
	is symmetric monoidal. Hence, we get a functor 
	\[\square: \mathrm{CoAlg}_{\mathbb{E}_\infty}\mathcal{D}\to \mathrm{CoAlg}_{\mathbb{E}_\infty}(\mathrm{Mod}_R)=\mathrm{CoCAlg}(\mathrm{Mod}_R).\]
	However, by the Dunn Additivity Theorem, we see that $\mathrm{CoAlg}_{\mathbb{E}_\infty}\mathcal{D}$ is the full subcategory of $\mathrm{CoCAlg}(\mathrm{Mod}_R)^{\Lambda^2_2}$ consisting of diagrams 
	\begin{equation*}
    \begin{tikzcd}
        A \arrow[r] & C & B \arrow[l]\in \mathrm{CoCAlg}(\mathrm{Mod}_R^{\mathrm{cn}})
    \end{tikzcd}
    \end{equation*}
    with $C$ a coaugmented cocommutative $R$-coalgebra with $2$-connective coaugmentation coideal. This shows that $A\square_C B$ is an object of $\mathrm{CoCAlg}(\mathrm{Mod}_R)$.
    
    To prove that $A\square_C B$ is the pullback in $\mathrm{CoCAlg}(\mathrm{Mod}_R)$, we must show that there is an equivalence of $\infty$-groupoids for any $D\in \mathrm{CoCAlg}(\mathrm{Mod}_R)$
       \[\alpha: \mathrm{Map}_{\mathrm{CoCAlg}(\mathrm{Mod}_R)^{\Lambda^2_2}}(\underline{D},F) \to \mathrm{Map}_{\mathrm{CoCAlg}(\mathrm{Mod}_R)}(D,A\square_C B),\]
       where $\underline{D}$ is the constant functor with value $D$.
       In fact, using an argument similar to the proof of Lemma \ref{i=1,2}, we can prove that $D \square_D D$ is an cocommutative $R$-coalgebra, and the equivalence $D \simeq D \square_D D$ in $\mathrm{Mod}_R$ is an equivalence of cocommutative $R$-coalgebras,  since the cosimplicial object $\Omega^{\bullet}_R(D,D,D)$ splits (more precisely, we can prove that $D^{\otimes^n}\square_{D^{\otimes^n}}D^{\otimes^n}\simeq (D \square_D D)^{\otimes n}$). Now, the $\alpha$ acts on objects via
           \[\underline{D} \to F \mapsto D\simeq D\square_D D\xrightarrow{\alpha(0) \square_{\alpha(2)} \alpha(1)} A\square_C B,\]
       as one can show that $\alpha(0) \square_{\alpha(2)} \alpha(1)$ is a map of coalgebras. Moreover, the identity map $\underline{D}\to \underline{D}$ induces a map $\eta_D:D\to \square(\underline{D})$, while the diagram
      \[\xymatrix{
       & A\square_C B \ar[d] \ar[r]  & B \ar[d]            \\
       & A \ar[r] & C}\]
      induces a map $\epsilon_F: \underline{\square (F)}\to F$. Note that all the above constructions are natural. Therefore, it suffices to verify that both
      \[\underline{D}\xrightarrow{\underline{\eta_D}} \underline{\square(\underline{D})}\xrightarrow{\epsilon_{\underline{D}}} \underline{D}\]
      and
      \[\square(F)\xrightarrow{\eta_{\square(F)}} \square(\underline{\square(F)})\xrightarrow{\square(\epsilon_F)}\square(F)\]
      are equivalence, and this verification is routine.
\end{proof}
 
 The following corollary is a variant of \cite[Proposition 4.5]{BGS22}.
 
\begin{cor}\label{cor:Car}
Let $C$ be an object of $\mathrm{CoCAlg}(\mathrm{Mod}_R)_{R/}^{\geq 2}$, the $\infty$-category $\mathrm{CoCAlg}(\mathrm{Mod}_R^{cn})_{/C}$ admits a Cartesian symmetric monoidal structure such that the tensor product of two objects is the cotensor product of this two objects over $C$.
\end{cor}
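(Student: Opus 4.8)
The plan is to exhibit $\mathrm{CoCAlg}(\mathrm{Mod}_R^{\mathrm{cn}})_{/C}$ as an $\infty$-category admitting finite products in which the product of two objects is their cotensor product over $C$, and then to invoke the general fact that any $\infty$-category with finite products carries an essentially unique Cartesian symmetric monoidal structure whose underlying binary tensor bifunctor is the categorical product (see \cite[\S2.4.1]{HA}). Since the conclusion of the corollary describes exactly such a structure, this will suffice.

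First I would record the terminal object. As $R$ is connective and the coaugmentation coideal $\overline{C}=\mathrm{cofib}(R\to C)$ is $2$-connective, the fiber sequence $R\to C\to\overline{C}$ forces $C$ to be connective, so $C\in\mathrm{CoCAlg}(\mathrm{Mod}_R^{\mathrm{cn}})$ and $\mathrm{id}_C\colon C\to C$ is terminal in $\mathrm{CoCAlg}(\mathrm{Mod}_R^{\mathrm{cn}})_{/C}$.

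Next I would produce binary products. Given objects $A\to C$ and $B\to C$ with $A,B\in\mathrm{CoCAlg}(\mathrm{Mod}_R^{\mathrm{cn}})$, Proposition \ref{pullback} identifies the pullback of the cospan $A\to C\leftarrow B$, formed in $\mathrm{CoCAlg}(\mathrm{Mod}_R)$, with $A\square_C B$, while the connectivity estimates underlying Lemma \ref{i=1,2} and Proposition \ref{prop:suqare in LcMod} show that $A\square_C B$ is again connective, hence lies in $\mathrm{CoCAlg}(\mathrm{Mod}_R^{\mathrm{cn}})$. Since $\mathrm{CoCAlg}(\mathrm{Mod}_R^{\mathrm{cn}})$ is a full subcategory of $\mathrm{CoCAlg}(\mathrm{Mod}_R)$ containing $A$, $B$, $C$ and $A\square_C B$, that square remains a pullback inside $\mathrm{CoCAlg}(\mathrm{Mod}_R^{\mathrm{cn}})$; equivalently, $A\square_C B$ with its induced map to $C$ is the product of $A\to C$ and $B\to C$ in $\mathrm{CoCAlg}(\mathrm{Mod}_R^{\mathrm{cn}})_{/C}$. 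Because $\square_C$ preserves connectivity, iterating yields all finite products, computed by iterated cotensor products over $C$; compatibility with the unit $\mathrm{id}_C$ is the equivalence $A\square_C C\simeq A$ of Lemma \ref{lem: sq C}. Applying the cited result on Cartesian symmetric monoidal structures then finishes the argument, which is an abstract counterpart of the space-level statement \cite[Proposition 4.5]{BGS22}.

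The substantive input lies entirely in Proposition \ref{pullback} and the connectivity bounds of Section \ref{sec: prop of cotensor}: these are what guarantee that the pullbacks needed for the overcategory products exist already inside the connective subcategory rather than merely in $\mathrm{CoCAlg}(\mathrm{Mod}_R)$, and this is the only place the hypotheses on $C$ (and connectivity of $A,B$) are used. I expect the remaining difficulty to be bookkeeping rather than conceptual: one must make sure the projections and the structure map $A\square_C B\to C$ are extracted coherently from the pullback square so that the identification ``product in $\mathrm{CoCAlg}(\mathrm{Mod}_R^{\mathrm{cn}})_{/C}$ equals $\square_C$'' is functorial in $A$ and $B$, i.e.\ so that the abstract Cartesian tensor genuinely is the cotensor product as a bifunctor; this is taken care of by the functoriality of the comparison map already constructed in the proofs of Theorem \ref{th: sym} and Proposition \ref{pullback}.
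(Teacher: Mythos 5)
Your proposal is correct and follows essentially the same route as the paper: identify $A\square_C B$ as the pullback of $A\to C\leftarrow B$ in $\mathrm{CoCAlg}(\mathrm{Mod}_R)$ via Proposition \ref{pullback}, use the connectivity estimates to see that this pullback already lives in the full subcategory $\mathrm{CoCAlg}(\mathrm{Mod}_R^{\mathrm{cn}})$ and hence gives the product in the overcategory, and then invoke \cite[Proposition 2.4.1.5]{HA}. The extra remarks on the terminal object and on iterating to all finite products are harmless elaborations of what the paper leaves implicit.
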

\begin{proof}
 	Given a cospan $A\xrightarrow{f} C \xleftarrow{g} B$ in $\mathrm{CoCAlg}(\mathrm{Mod}_R^{cn})$, $A\square_C B$ is a cocommutative $R$-coalgebra and admits a cocommutative $R$-coalgebra map to $C$, and hence becomes an object of $\mathrm{CoCAlg}(\mathrm{Mod}_R)_{/C}$, by Proposition \ref{prop:suqare in LcMod}. Furthermore, Proposition \ref{prop:suqare in LcMod} also implies that $A\square_C B$ is the pullback of the cospan in $\mathrm{CoCAlg}(\mathrm{Mod}_R)$. In particular, it is the product of $f$ and $g$ in the  $\infty$-category $\mathrm{CoCAlg}(\mathrm{Mod}_R)_{/C}$. Since $A\square_C B$ is connective by Lemma \ref{connective}, $A\square_C B$ is also the product of $f$ and $g$ in the  full subcategory $\mathrm{CoCAlg}(\mathrm{Mod}_R^{cn})_{/C}$ of $\mathrm{CoCAlg}(\mathrm{Mod}_R)_{/C}$. Therefore, $\mathrm{CoCAlg}(\mathrm{Mod}_R^{cn})_{/C}$ admits finite limits, and the result follows from \cite[Proposition 2.4.1.5]{HA}
\end{proof}
 
\begin{rem}
	In fact, we may apply the $\mathrm{CoAlg}_{\mathbb{E}_k}$ to Equation \ref{fun: squ} in place of the $\mathrm{CoAlg}_{\mathbb{E}_{\infty}}$ and get a functor
	\[\square: \mathrm{CoAlg}_{\mathbb{E}_k}\mathcal{D}\to \mathrm{CoAlg}_{\mathbb{E}_k}(\mathrm{Mod}_R).\]
	By the Dunn Additivity Theorem, this means that given a cospan 
	\begin{equation*}
    \begin{tikzcd}
        A \arrow[r] & C & B \arrow[l] \in \mathrm{CoAlg}_{\mathbb{E}_{k+1}}(\mathrm{Mod}_R^{\mathrm{cn}})
    \end{tikzcd}
    \end{equation*}
    with $C$ coaugmented whose coaugmentation coideal is $2$-connective, $A\square_C B$ is an $\mathbb{E}_k$-coalgebra. However, in general, $A\square_C B$ is not the pullback of the above cospan. 
\end{rem}

\section{Applications to coTHH}\label{sec: app}

In this section, we give some applications of the results established in Section \ref{sec: prop of cotensor} by drawing analogies with properties of topological Hochschild homology. First, we show that the following university property is satisfied by the coTHH. For the case of topological Hochschild homology, we refer to \cite[Proposition, \uppercase\expandafter{\romannumeral4}.2.2]{NS18}. Theorem \ref{thm:coalg} is a combination of the following two propositions.

\begin{prop}\label{prop:S^1}
Let	$C$ be an object of $\mathrm{CoCAlg}(\mathrm{Mod}_R)_{R/}^{\geq 2}$, then $\mathrm{coTHH}^R(C)$ is a cocommuatitive $R$-coalgebra. Moreover, the map $\mathrm{coTHH}^R(C) \to C$ is terminal among all maps from a cocommutative $R$-coalgebra equipped with an $S^1$ action to $C$ through cocommutative maps.
\end{prop}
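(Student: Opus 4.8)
The plan is to identify $\mathrm{coTHH}^R(C)$, together with its $S^1$-action and the map $\mathrm{coTHH}^R(C)\to C$, with the cotensor $C^{S^1}$ viewed as the right Kan extension $i_*C$ of $C$ along the basepoint inclusion $i\colon\{*\}\hookrightarrow BS^1$ inside $\mathrm{Fun}(BS^1,\mathrm{CoCAlg}(\mathrm{Mod}_R))$. Once this is in place, the asserted terminality is the formal consequence of the adjunction $i^*\dashv i_*$, exactly dual to the way \cite[Proposition IV.2.2]{NS18} identifies $\mathrm{THH}(A)\simeq A\otimes S^1$ with $i_!A$ for a commutative ring spectrum $A$ and deduces the universal property from $i_!\dashv i^*$.

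\emph{The coalgebra structure and the non-equivariant identification.} By Theorem \ref{th1} (in the form of Theorem \ref{cot and cothh}), $\mathrm{coTHH}^R(C)\simeq C\,\square_{C^e}\,C$ in $\mathrm{Mod}_R$, where $C^e=C\otimes C^{\mathrm{op}}$. Since $C$ is cocommutative, $C^{\mathrm{op}}\simeq C$, so $C^e\simeq C\otimes C$ as cocommutative $R$-coalgebras; splitting the coaugmentation $C\simeq R\oplus\overline C$ via the counit gives $\overline{C^e}\simeq\overline C\oplus\overline C\oplus(\overline C\otimes\overline C)$, which is $2$-connective, so $C^e\in\mathrm{CoCAlg}(\mathrm{Mod}_R)_{R/}^{\geq 2}$ and $C\in\mathrm{CoCAlg}(\mathrm{Mod}_R^{\mathrm{cn}})$. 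The two maps presenting each copy of $C$ as a $C^e$-comodule in $C\,\square_{C^e}\,C$ are the corestrictions of the trivial comodule along the comultiplication $\Delta\colon C\to C\otimes C=C^e$, which is a coalgebra map precisely because $C$ is cocommutative, dual to the fact that in the algebra case $A$, as an $A^e$-module, is the restriction of the free module along the multiplication $A^e\to A$. Hence Proposition \ref{pullback} applies and exhibits $\mathrm{coTHH}^R(C)\simeq C\,\square_{C^e}\,C$ as the pullback in $\mathrm{CoCAlg}(\mathrm{Mod}_R)$ of the cospan $C\xrightarrow{\Delta}C^e\xleftarrow{\Delta}C$; in particular $\mathrm{coTHH}^R(C)$ is a cocommutative $R$-coalgebra. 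By \cite[Proposition 3.2.4.7]{HA} the tensor product is the categorical product on $\mathrm{CoCAlg}(\mathrm{Mod}_R)$, so $C^e\simeq C\times C$ with $\Delta$ the diagonal, and the pullback above is $C\times_{C\times C}C\simeq C^{S^1}$ (using $S^1\simeq *\sqcup_{*\sqcup *}*$), the comparison map $\mathrm{coTHH}^R(C)\to C^{\otimes 1}=C$ being the restriction along $\{*\}\hookrightarrow S^1$.

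\emph{The $S^1$-action.} As $\mathrm{THH}^{\mathrm{Mod}_R^{\mathrm{op}}}(C)$, the object $\mathrm{coTHH}^R(C)$ is the totalization of the cyclic cobar construction $[n]\mapsto C^{\otimes(n+1)}$, all of whose structure maps are coalgebra maps ($\Delta$ and the counit), so this is a cocyclic object of $\mathrm{CoCAlg}(\mathrm{Mod}_R)$; the splitting arguments of Theorem \ref{cot and cothh} and Lemma \ref{lem: sq C} together with Proposition \ref{pullback} identify its totalization in $\mathrm{CoCAlg}(\mathrm{Mod}_R)$ with the finite pullback just computed, whose underlying $R$-module is $\mathrm{coTHH}^R(C)$. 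A cocyclic object endows its totalization with an $S^1$-action, so $\mathrm{coTHH}^R(C)$ acquires an $S^1$-action in $\mathrm{CoCAlg}(\mathrm{Mod}_R)$. Using once more that $\otimes$ is the product in $\mathrm{CoCAlg}(\mathrm{Mod}_R)$ one may rewrite $C^{\otimes(n+1)}$ as the cotensor $C^{(\Delta^1/\partial\Delta^1)_n}$ of $C$ with the $n$-simplices of Connes' simplicial circle and identify the whole cocyclic object with $C^{(\Delta^1/\partial\Delta^1)_\bullet}$, whose totalization is $C^{|\Delta^1/\partial\Delta^1|}=C^{S^1}$ with its rotation action — this being dual to the identification $B^{\mathrm{cy}}_\bullet(A)\simeq A\otimes(\Delta^1/\partial\Delta^1)_\bullet$. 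This compatibility, namely that the intrinsic (Connes) $S^1$-action on the totalization of the cyclic cobar agrees, under the identification of the previous paragraph, with the rotation action on $C^{S^1}$, is the one genuinely non-formal point: I expect to carry it out by making the cocyclic object precise in $\mathrm{CoCAlg}(\mathrm{Mod}_R)$ and then reducing, via the finite-connectivity estimates of Lemma \ref{com} and Proposition \ref{prop:suqare in LcMod}, to the classical equivariant statement $\mathrm{THH}\simeq(-)\otimes S^1$ in $\mathrm{Mod}_R^{\mathrm{op}}$. The upshot is $\mathrm{coTHH}^R(C)\simeq i_*C$ in $\mathrm{Fun}(BS^1,\mathrm{CoCAlg}(\mathrm{Mod}_R))$, compatibly with the maps to $C$.

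\emph{The universal property.} For a cocommutative $R$-coalgebra $D$ with an $S^1$-action, write $D^{\mathrm{und}}=i^*D$ for its underlying coalgebra. Combining the identification of the previous step with the adjunction $i^*\dashv i_*$ yields natural equivalences
\[\mathrm{Map}_{\mathrm{Fun}(BS^1,\mathrm{CoCAlg}(\mathrm{Mod}_R))}\bigl(D,\mathrm{coTHH}^R(C)\bigr)\;\simeq\;\mathrm{Map}_{\mathrm{Fun}(BS^1,\mathrm{CoCAlg}(\mathrm{Mod}_R))}(D,i_*C)\;\simeq\;\mathrm{Map}_{\mathrm{CoCAlg}(\mathrm{Mod}_R)}(D^{\mathrm{und}},C),\]
under which the counit $i^*i_*C\to C$ corresponds to the comparison map $\mathrm{coTHH}^R(C)\to C$. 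This says precisely that $\mathrm{coTHH}^R(C)\to C$ is terminal among maps to $C$ from a cocommutative $R$-coalgebra equipped with an $S^1$-action, the maps being cocommutative coalgebra maps out of the underlying coalgebra. Apart from the $S^1$-equivariance compatibility isolated above, every step is either connectivity bookkeeping (already packaged in Lemma \ref{connective}, Proposition \ref{prop:suqare in LcMod} and Theorem \ref{th: sym}) or a formal dualization of the commutative-algebra case, so I expect that equivariance to be the only point requiring real work.
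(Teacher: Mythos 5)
Your proposal follows essentially the same route as the paper: identify $\mathrm{coTHH}^R(C)\simeq C\,\square_{C^e}C$ via Theorem \ref{th1}, use Proposition \ref{pullback} to exhibit this as the pullback $C\times_{C\otimes C}C$ in $\mathrm{CoCAlg}(\mathrm{Mod}_R)$, identify that pullback with the cotensor $C^{S^1}$, and deduce terminality from the universal property of the cotensor (i.e.\ the right Kan extension along $*\to BS^1$). The $S^1$-equivariance compatibility you isolate as the one non-formal point is also left implicit in the paper's own proof, so your treatment is, if anything, slightly more explicit about where the remaining work lies.
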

\begin{proof}
	Note that the coaction of $C$ as an right or left $C\otimes C$-comodule is given by
	\[C\xrightarrow{\Delta}C\otimes C\xrightarrow{id_C\otimes\Delta} C\otimes C\otimes C,\]
	\[C\xrightarrow{\Delta}C\otimes C\xrightarrow{\Delta \otimes id_C} C\otimes C\otimes C,\]
	respectively, and $\Delta$ is a map of cocommutative $R$-coalgebras, since $C$ is cocommutative. Furthermore, $C\otimes C$ also belongs to $\mathrm{CoCAlg}(\mathrm{Mod}_R)_{R/}^{\geq 2}$. Hence, we can apply Theorem \ref{th1} and Proposition \ref{pullback} to get the following commutative digram in $\mathrm{CoCAlg}(\mathrm{Mod}_R)$
    \begin{eqnarray*}
       \begin{tikzcd}
        \mathrm{coTHH}^R(C) \arrow[r] \arrow[d] \arrow[dr,phantom,"\lrcorner", very near start] & C \arrow[r, "\simeq"] \arrow[d]  & C^{\Delta^1} \arrow[d] \\
           C \arrow[r] & C\otimes C \arrow[r, "\simeq"] & C^{\partial\Delta^1}.
       \end{tikzcd}	
    \end{eqnarray*}
    The equivalence of the bottom-right arrow follows from \cite[Corollary 3.2.4.7]{HA}. By the proof of Proposition \ref{pullback}, there is an equivalence of cocommutative $R$-coalgebras $C\simeq C\square_C C$. Now, the equivalence of the top-right arrow follows from the fact that $C^{\Delta1}$ is the totalization of the cosimplicial object 
    \[\mathrm{Map}_*(\Delta^1,C)\simeq \Omega^{\bullet}_R(C,C,C),\]
     since products in $\mathrm{CoCAlg}(\mathrm{Mod}_R)$ is given by tensor products. Furthermore, the composite of bottom maps is induced by $\partial\Delta^1\to *$, while the right vertical map is induced by the inclusion $i:\partial\Delta^1\to \Delta^1$. Therefore, the cotensor of $C$ with $S^1$ in the $\infty$-category of cocommuatitive $R$-coalgebras is exactly $\mathrm{coTHH}^R(C)$.
\end{proof}

The proof of the above proposition relies on $C$ being a cocommutative $R$-coalgebra. If $C$ is only an $\mathbb{E}_k$-coalgebra (for $k<\infty$), the theorem no longer holds. Nevertheless, in this case, we can still obtain results analogous to topological Hochschild homology.

\begin{prop}\label{Ek}
	The functor $\mathrm{coTHH}^R$ can be lifted to a functor from the $\infty$-category of coaugmented $\mathbb{E}_k$-R-coalgebras with $2$-connective coaugmentation coideals to the $\infty$-category of $\mathbb{E}_{k-1}$-R-coalgebras:
	\[\mathrm{coTHH}^R: \mathrm{CoAlg}_{\mathbb{E}_k}(\mathrm{Mod}_R)_{R/}^{\geq 2}\to \mathrm{CoAlg}_{\mathbb{E}_{k-1}}(\mathrm{Mod}_R),\]
	for $k\geq 1$.
\end{prop}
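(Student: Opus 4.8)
The plan is to upgrade $\mathrm{coTHH}^R$, restricted to coaugmented coalgebras with $2$-connective coaugmentation coideal, to a symmetric monoidal functor into $\mathrm{Mod}_R$, and then to apply $\mathrm{CoAlg}_{\mathbb{E}_{k-1}}(-)$ together with the Dunn Additivity Theorem, in close analogy with the passage from Theorem \ref{th: sym} to Proposition \ref{pullback}. First one checks that the enveloping construction preserves the relevant connectivity: if $C \in \mathrm{CoAlg}(\mathrm{Mod}_R)_{R/}^{\geq 2}$ then $C$ is connective and, writing $C \simeq R \oplus \overline{C}$ as $R$-modules, the coaugmentation coideal of $C^e = C \otimes C^{\mathrm{op}}$ is $\overline{C} \oplus \overline{C} \oplus (\overline{C} \otimes \overline{C})$, which is $2$-connective since $R$ and $\overline{C}$ are connective and $\overline{C}$ is $2$-connective; hence $C^e \in \mathrm{CoAlg}(\mathrm{Mod}_R)_{R/}^{\geq 2}$. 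Moreover $C$ is a connective right and left $C^e$-comodule, so by Theorem \ref{th1} the equivalence $\mathrm{coTHH}^R(C) \simeq C\ \square_{C^e}\ C$ places $\mathrm{coTHH}^R(C)$ within the range of Theorem \ref{th: sym}.

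The heart of the argument is to assemble a symmetric monoidal functor. The assignment $C \mapsto C^e$ is a symmetric monoidal endofunctor of $\mathrm{CoAlg}(\mathrm{Mod}_R)_{R/}^{\geq 2}$ via the natural equivalence $(C \otimes D)^e \simeq C^e \otimes D^e$, and over it $C$ defines, functorially and symmetric-monoidally, a right and a left $C^e$-comodule, that is, a symmetric monoidal functor $C \mapsto \big((R,C,C^e),(C^e,C,R)\big)$ into the fiber-product source of Theorem \ref{th: sym} with $C^e$ playing the role of the shared coalgebra. Composing this with the symmetric monoidal functor $\square$ of Theorem \ref{th: sym} and with the symmetric monoidal forgetful functor $\mathrm{BiCoMod}(\mathrm{Mod}_R) \to \mathrm{Mod}_R$ produces a symmetric monoidal functor
\[\mathrm{coTHH}^R \colon \mathrm{CoAlg}(\mathrm{Mod}_R)_{R/}^{\geq 2} \longrightarrow \mathrm{Mod}_R,\]
which by Theorem \ref{th1} agrees with $\mathrm{coTHH}^R$; concretely, its monoidal structure equivalences $\mathrm{coTHH}^R(C) \otimes \mathrm{coTHH}^R(D) \simeq \mathrm{coTHH}^R(C \otimes D)$ are precisely those furnished by Lemma \ref{i=1,2} (taking $M_i = N_i = C_i$, $A_i = B_i = R$, and the coalgebras $C_i^e$).

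Now apply $\mathrm{CoAlg}_{\mathbb{E}_{k-1}}(-)$ to this symmetric monoidal functor to obtain
\[\mathrm{coTHH}^R \colon \mathrm{CoAlg}_{\mathbb{E}_{k-1}}\big(\mathrm{CoAlg}(\mathrm{Mod}_R)_{R/}^{\geq 2}\big) \longrightarrow \mathrm{CoAlg}_{\mathbb{E}_{k-1}}(\mathrm{Mod}_R).\]
By the Dunn Additivity Theorem, using the pointwise symmetric monoidal structure on $\mathrm{CoAlg}(\mathrm{Mod}_R)$ from \cite[Example 3.2.4.4]{HA}, one has $\mathrm{CoAlg}_{\mathbb{E}_{k-1}}(\mathrm{CoAlg}_{\mathbb{E}_1}(\mathrm{Mod}_R)) \simeq \mathrm{CoAlg}_{\mathbb{E}_k}(\mathrm{Mod}_R)$, and the conditions of being coaugmented and of having a $2$-connective coaugmentation coideal are detected on the underlying $\mathbb{E}_1$-coalgebra, so the source is identified with $\mathrm{CoAlg}_{\mathbb{E}_k}(\mathrm{Mod}_R)_{R/}^{\geq 2}$ (exactly as in the $\mathbb{E}_\infty$ identification carried out in the proof of Proposition \ref{pullback}). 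Since the symmetric monoidal functor constructed above is compatible with the forgetful functors to $\mathrm{Mod}_R$, the resulting functor lifts $\mathrm{coTHH}^R$, which is the claim; for $k = \infty$ this recovers Proposition \ref{prop:S^1}, and for $k = 1$ it reduces to the observation that $\mathrm{coTHH}^R(C)$ is counital over $R$.

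The step I expect to require the most care is the construction in the second paragraph: coherently combining the enveloping-coalgebra functor, the self-bicomodule structure on $C$, and the functor $\square$ into a single symmetric monoidal functor, and checking that the fiber-product source of Theorem \ref{th: sym} is hit symmetric-monoidally with $C^e$ as the shared coalgebra (so that the binary equivalences of Lemma \ref{i=1,2} assemble into a genuine monoidal structure). Granting this, the connectivity bookkeeping for $C^e$ and the Dunn identification of the source are routine.
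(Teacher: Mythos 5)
Your proposal is correct and follows essentially the same route as the paper: both establish that $\mathrm{coTHH}^R \simeq (-)\,\square_{(-)^e}\,(-)$ is symmetric monoidal on $\mathrm{CoAlg}(\mathrm{Mod}_R)_{R/}^{\geq 2}$ via the identification $C_1^e\otimes C_2^e\simeq (C_1\otimes C_2)^e$ together with Lemma \ref{i=1,2}, and then conclude by the Dunn Additivity Theorem. Your write-up is somewhat more explicit about the connectivity of the coaugmentation coideal of $C^e$ and about the coherence of assembling the enveloping-coalgebra functor with $\square$, details the paper leaves implicit.
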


\begin{proof}
	Note that $C_1^e\otimes C_2^e\simeq (C_1\otimes C_2)^e$ as coalgebras, hence by Lemma \ref{i=1,2} we obtain 
	\[(C_1\square_{C_1^e} C_1)\otimes (C_2\square_{C_2^e} C_2)\simeq (C_1\boxtimes_R C_2) \square_{(C_1\otimes C_2)^e} (C_1\boxtimes_R C_2).\]
	Therefore, Theorem 1 shows that $\mathrm{coTHH}^R$ is symmetric monoidal as a functor from $\mathrm{CoAlg}(\mathrm{Mod}_R)_{R/}^{\geq 2}$ to $\mathrm{Mod}_R.$ Now, the result follows from the Dunn Additivity Theorem.
\end{proof}

Next, we consider the associativity of $\square$. For this purpose, let $C$ and $D$ be two coaugmented coalgebras with $2$-connective coaugmentation coideals,  $L$ be an right $C$-comodule, $M$ be a $C$-$D$-bicomodule, and $N$ be a left $D$-comodule, with the latter three comodules being connective. 

\begin{lem}\label{ass}
With the above notation, we have
	\[(L\square_C M)\square_D N\simeq L\square_C(M\square_D N).\]
\end{lem}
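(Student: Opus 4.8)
The plan is to identify both sides with the totalization of one and the same bicosimplicial object, the triple cobar construction $\Omega^{\bullet,\bullet}$ with $(j,k)$-term $\Omega^{j,k}=L\otimes C^{\otimes j}\otimes M\otimes D^{\otimes k}\otimes N$, where the coface maps in the $j$-direction are built from the comultiplication of $C$, the right $C$-coaction of $L$ and the left $C$-coaction of $M$, and those in the $k$-direction from the comultiplication of $D$, the right $D$-coaction of $M$ and the left $D$-coaction of $N$. Because $M$ is a $C$-$D$-bicomodule, its left $C$- and right $D$-coactions commute, and all the remaining coface and codegeneracy maps in the two directions act on disjoint tensor factors, so $\Omega^{\bullet,\bullet}$ is a genuine functor $\Delta\times\Delta\to\mathrm{Mod}_R$; the lemma will then follow from $\mathrm{Tot}_{[k]}\mathrm{Tot}_{[j]}\,\Omega^{\bullet,\bullet}\simeq\varprojlim_{\Delta\times\Delta}\Omega^{\bullet,\bullet}\simeq\mathrm{Tot}_{[j]}\mathrm{Tot}_{[k]}\,\Omega^{\bullet,\bullet}$.

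First I would collect the connectivity input. By Proposition \ref{prop:suqare in LcMod}, $L\square_C M$ carries a right $D$-comodule structure and $M\square_D N$ a left $C$-comodule structure, both computed in $\mathrm{Mod}_R$; arguing as in the proof of Lemma \ref{i=1,2} --- the bottom of the cobar tower, $L\otimes M$ resp.\ $M\otimes N$, is connective and the tower has $n$-connective fibers by Lemma \ref{connective} (using $C,D\in\mathrm{CoAlg}(\mathrm{Mod}_R)_{R/}^{\geq 2}$) --- both $L\square_C M$ and $M\square_D N$ are connective. Moreover $D^{\otimes k}\otimes N$ and $L\otimes C^{\otimes j}$ are connective, since a coaugmented coalgebra with $2$-connective coaugmentation coideal is connective.

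For the left-hand side, $(L\square_C M)\square_D N=\mathrm{Tot}_{[k]}\big((L\square_C M)\otimes D^{\otimes k}\otimes N\big)$, and tensoring $L\square_C M$ with the connective module $D^{\otimes k}\otimes N$ commutes with the totalization that defines it --- this is exactly the computation carried out in the proof of Lemma \ref{i=1,2}, via Lemma \ref{com} and Lemma \ref{connective} --- giving $(L\square_C M)\otimes D^{\otimes k}\otimes N\simeq\mathrm{Tot}_{[j]}\Omega^{j,k}$, naturally in $[k]$, whence $(L\square_C M)\square_D N\simeq\mathrm{Tot}_{[k]}\mathrm{Tot}_{[j]}\,\Omega^{\bullet,\bullet}$. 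Symmetrically, applying Lemma \ref{com} with the connective module $L\otimes C^{\otimes j}$ and the tower $\mathrm{Tot}^{\bullet}(\Omega_R^{\bullet}(M,D,N))$, whose fibers are again $n$-connective by Lemma \ref{connective}, yields $L\otimes C^{\otimes j}\otimes(M\square_D N)\simeq\mathrm{Tot}_{[k]}\Omega^{j,k}$ naturally in $[j]$, and hence $L\square_C(M\square_D N)\simeq\mathrm{Tot}_{[j]}\mathrm{Tot}_{[k]}\,\Omega^{\bullet,\bullet}$; here Proposition \ref{prop:suqare in LcMod} is used to identify the left $C$-coaction on $M\square_D N$ entering the outer cobar as the one induced from $M$, which is compatible with the $k$-direction structure. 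Since limits commute with limits, the two iterated totalizations agree, which is the claim.

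The step that requires care is the naturality bookkeeping: one must verify that the levelwise equivalences supplied by Lemma \ref{com} assemble into equivalences of (co)simplicial objects, so that extracting the inner totalization does not disturb the outer coface maps, and that the resulting double cosimplicial object is honestly $\Omega^{\bullet,\bullet}$. Both facts follow from the naturality of the comparison map in Lemma \ref{com} together with the observation that the two coactions of $M$ entering the $j$- and $k$-directions are precisely the commuting coactions of its bicomodule structure; the connectivity estimates involved are exactly those already isolated in Lemmas \ref{com} and \ref{connective}, so no new ingredient is needed.
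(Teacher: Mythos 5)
Your proposal is correct and follows the paper's intended argument: the paper simply says the result is established by a proof analogous to that of Lemma \ref{i=1,2}, i.e., by using Lemma \ref{com} together with the connectivity estimates of Lemma \ref{connective} to commute the connective tensor factors $D^{\otimes k}\otimes N$ and $L\otimes C^{\otimes j}$ past the relevant totalizations, and you have carried out exactly that reduction of both sides to the limit of the double cobar construction over $\Delta\times\Delta$ (with Proposition \ref{prop:suqare in LcMod} supplying the comodule structures that make the iterated cotensor products well defined). No gaps.
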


\begin{proof}
	By Proposition \ref{prop:suqare in LcMod}, $L \square_C M$ is a right $D$-comodule and $M \square_D N$ is a left $C$-comodule, so the notation makes sense. An analogous proof to that of Lemma \ref{i=1,2} establishes the result.
\end{proof}

As an application of this lemma, we use it to examine the Morita-Takeuchi invariance of $\mathrm{coTHH}$. We use the following lemma to define Morita-Takeuchi equivalent coalgebras, where again the connectivity assumption is necessary:
\begin{lem}\label{lem:morita}
	Given $C,D\in \mathrm{CoAlg}(\mathrm{Mod}_R)_{R/}^{\geq 2}$, the following two conditions are equivalent:
	\begin{enumerate}
		\item[$\mathrm{(i)}$] There is an $R$-linear equivalence of $\infty$-categories
		    \[F: \mathrm{CoMod}_C(\mathrm{Mod}_R^{>\infty}) \xrightarrow{\sim} \mathrm{CoMod}_D(\mathrm{Mod}_R^{>\infty}).\]
		\item[$\mathrm{(ii)}$] There exist a bounded below $C$-$D$-bicomodule $P$ and a bounded below $D$-$C$-bicomodule $Q$ such that $P\square_D Q \simeq C$ and $Q\square_C P\simeq D$ as $C$-bicomodules and $D$-bicomodules, respectively.
	\end{enumerate}
	Moreover, if these two conditions are satisfied, then $F$ is equivalent to $-\square_C P$ and its inverse is equivalent to $-\square_D Q$. Here $\mathrm{Mod}_R^{>\infty}$ is the $\infty$-category of bounded below $R$-modules.
\end{lem}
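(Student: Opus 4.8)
To prove Lemma \ref{lem:morita} the plan is to run the comodule-theoretic (Morita--Takeuchi) version of the derived Morita argument: recognise an $R$-linear equivalence as ``cotensoring with an invertible bicomodule''. The implication $\mathrm{(ii)}\Rightarrow\mathrm{(i)}$ is the formal half. Given $P$ and $Q$ as in $\mathrm{(ii)}$, set $F:=-\square_C P$ and $G:=-\square_D Q$. By Proposition \ref{prop:suqare in LcMod} (combined with Remark \ref{bounded}, which reduces from connective to bounded below comodules) these define functors $\mathrm{CoMod}_C(\mathrm{Mod}_R^{>\infty})\to\mathrm{CoMod}_D(\mathrm{Mod}_R^{>\infty})$ and back, and they land in bounded below comodules because of the connectivity estimates of Lemma \ref{connective}. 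Associativity (Lemma \ref{ass}) gives $G\circ F\simeq -\square_C(P\square_D Q)$, and since $P\square_D Q\simeq C$ \emph{as $C$-bicomodules} Lemma \ref{lem: sq C} identifies this endofunctor of ${_C\mathrm{CoMod}_C}$ with $-\square_C C=\mathrm{id}$; symmetrically $F\circ G\simeq\mathrm{id}$, so $F$ is an equivalence. Finally $F$ is $R$-linear: the identity $L\otimes(M\square_C N)\simeq(L\otimes M)\square_C N$ of Lemma \ref{com} and Proposition \ref{prop:suqare in LcMod}, extended from connective to bounded below $L$ by desuspension, shows that $-\square_C P$ commutes with the canonical $\mathrm{Mod}_R$-tensoring on comodule categories.

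For $\mathrm{(i)}\Rightarrow\mathrm{(ii)}$, which simultaneously gives the ``moreover'' clause, I would argue as follows. Set $P:=F(C)$, where $C$ is viewed as a $C$-$C$-bicomodule via its comultiplication --- the left coaction $\Delta\colon C\to C\otimes C$ is a morphism of right $C$-comodules by coassociativity. Since $F$ is $R$-linear it carries the bicomodule structures that are built from the $\mathrm{Mod}_R$-action through to the target; concretely, $F$ promotes to a functor ${_C\mathrm{CoMod}_C}\to {_C\mathrm{CoMod}_D}$ (the dual of the classical Morita construction of the left action on $F(A)$), so $P$ is naturally a $C$-$D$-bicomodule. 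It is bounded below because $F$ takes values in $\mathrm{CoMod}_D(\mathrm{Mod}_R^{>\infty})$ and $C$ is itself bounded below ($R$ is connective and the coaugmentation coideal is $2$-connective). Likewise $Q:=F^{-1}(D)$ is a bounded below $D$-$C$-bicomodule.

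The heart of the matter is the identification $F\simeq -\square_C P$. For any right $C$-comodule $N$ the cobar construction $\Omega_R^{\bullet}(N,C,C)$ is a split cosimplicial object with limit $N$ (this is the content of equation \ref{bicomod} and Lemma \ref{lem: sq C}); being split, its totalization is preserved by the arbitrary functor $F$, so $F(N)\simeq\mathrm{Tot}\,F\big(\Omega_R^{\bullet}(N,C,C)\big)$. Each term $N\otimes C^{\otimes n}\otimes C$ is the cofree right $C$-comodule on the $R$-module $N\otimes C^{\otimes n}$, so $R$-linearity yields $F(N\otimes C^{\otimes n}\otimes C)\simeq(N\otimes C^{\otimes n})\otimes F(C)=N\otimes C^{\otimes n}\otimes P$; matching the cofaces --- the outer faces come from the $\mathrm{Mod}_R$-factor and the right coaction and are unchanged, while the last face is the transported left $C$-coaction on $P$ --- identifies the cosimplicial object $F(\Omega_R^{\bullet}(N,C,C))$ with $\Omega_R^{\bullet}(N,C,P)$, whose totalization is $N\square_C P$ by definition. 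Hence $F\simeq-\square_C P$ and, by the same argument, $F^{-1}\simeq-\square_D Q$. Composing and applying Lemma \ref{ass}, $\mathrm{id}\simeq F^{-1}\circ F\simeq -\square_C(P\square_D Q)$ as endofunctors of ${_C\mathrm{CoMod}_C}$; evaluating at the $C$-bicomodule $C$ and using Lemma \ref{lem: sq C} gives $C\simeq P\square_D Q$ as $C$-bicomodules, and symmetrically $D\simeq Q\square_C P$ as $D$-bicomodules. This is exactly $\mathrm{(ii)}$, and the displayed descriptions of $F$ and its inverse are the ``moreover''.

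The main obstacle is precisely this recognition step. Because $\mathrm{Mod}_R^{>\infty}$ is not presentable, neither the comodule categories nor their opposites are, so one cannot simply invoke an Eilenberg--Watts/Schwede--Shipley style classification of linear functors; the equivalence must be built by hand from cofree resolutions. What makes this feasible is, first, the \emph{splitness} of the cobar complex of $C$ (so that no extra hypothesis on $F$ preserving totalizations is needed) and, second, the connectivity estimates of Section \ref{sec: prop of cotensor} (so that every cotensor product in sight is bounded below, is computed in $\mathrm{Mod}_R$, and is compatible with the $\mathrm{Mod}_R$-action). The remaining work is bookkeeping: upgrading the levelwise identifications above to equivalences of cosimplicial objects and of functors between bicomodule categories, and checking that the left $C$-coaction placed on $P=F(C)$ is the one produced by $R$-linearity; these steps are routine but need to be carried out with care.
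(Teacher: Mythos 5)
Your proof is correct and follows essentially the same route as the paper's: both directions hinge on identifying an $R$-linear equivalence $F$ with $-\square_C F(C)$ by applying $F$ to the cobar resolution $\Omega^{\bullet}_R(M,C,C)$, using $R$-linearity on the cofree levels, and then invoking Proposition \ref{prop:suqare in LcMod} and Lemma \ref{ass} to get the invertible bicomodules. One small caveat: that resolution is split only after forgetting to $\mathrm{Mod}_R$, not in $\mathrm{CoMod}_C$ itself (the coaugmentation $M\to M\otimes C$ is a comodule map but the extra codegeneracies are not), so the reason $F$ preserves its totalization is that $F$ is an equivalence (together with Lemma \ref{lem: sq C} identifying that totalization with $M$), rather than splitness; this does not affect the argument.
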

\begin{proof}
	Clearly, $\mathrm{(ii)}\Rightarrow\mathrm{(i)}$ follows directly from Proposition \ref{prop:suqare in LcMod}, Lemma \ref{ass} and  Remark \ref{bounded}. It remains to prove that $\mathrm{(i)}\Rightarrow\mathrm{(ii)}$, and the remaining statement follows immediately. In fact, since any bounded below right $C$-comodule $M$ is equivalent to the totalization of $\Omega^{\bullet}_R(M,C,C)$ in $\mathrm{CoMod}_C(\mathrm{Mod}_R)$, $F(M)$ is equivalent to the totalization of 
	\[F(\Omega^{\bullet}_R(M,C,C))\simeq \Omega^{\bullet}_R(M,C,F(C))\]
	in $\mathrm{CoMod}_D(\mathrm{Mod}_R)$, where $F(C)$ is a left $C$-comodule as in \cite[Remark, 4.6.2.9]{HA}. By assumption, $F(C)$ is bounded below, hence Proposition \ref{prop:suqare in LcMod} implies that the totalization of $\Omega^{\bullet}_R(M,C,F(C))$ can be computed in $\mathrm{Mod}_R$. That is
	 \[F(M)\simeq M\square_C F(C).\]
	 Similarly, let $G$ be the $R$-linear inverse of $F$, than $G(N)\simeq N\square_D G(D)$ for any bounded below right $D$-comodule $N$. By Lemma \ref{ass}, taking $P=F(C)$ and $Q=G(D)$ yields the required condition.
\end{proof}

Two coalgebras $C$ and $D$ in $\mathrm{CoAlg}(\mathrm{Mod}_R)_{R/}^{\geq 2}$ are Morita-Takeuchi equivalent if they satisfy the conditions in the last Lemma. We use the dual of Dennis-Waldhausen Morita argument, see \cite[Proposition]{BM12}, to prove that coTHH sends Morita-Takeuchi equivalent coalgebras in $\mathrm{CoAlg}(\mathrm{Mod}_R)_{R/}^{\geq 2}$ to equivalence. The case $R=H\mathbb{K}$, where $\mathbb{K}$ is an ordinal commutative ring with global dimension zero, was proved in \cite{KP25}, and our proof follows essentially the same approach.

\begin{thm}\label{Morita}
	Suppose that $C,D\in \mathrm{CoAlg}(\mathrm{Mod}_R)_{R/}^{\geq 2}$ are Morita-Takeuchi equivalent coalgebras, than there is an equivalence
	\[\mathrm{coTHH}^R(C)\simeq \mathrm{coTHH}^R(D).\]
\end{thm}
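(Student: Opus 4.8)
The plan is to mimic the classical (dual) Dennis--Waldhausen--Morita argument exactly as in the algebraic case treated in \cite{KP25}, using the bicomodules $P$ and $Q$ provided by Lemma \ref{lem:morita} together with the associativity of $\square$ from Lemma \ref{ass} and the computation of coTHH as a cotensor product from Theorem \ref{th1}. The key identity to exploit is the equivalence $\mathrm{coTHH}^R(C) \simeq C \square_{C^e} C$, and more generally the ``two-variable'' version: for a $C$-$D$-bicomodule $X$ and a $D$-$C$-bicomodule $Y$ (both bounded below), the object $X \square_{D} Y$ carries a $C$-bicomodule structure, and one should identify $(X \square_D Y) \square_{C^e} C$ with a suitable ``trace'' built symmetrically from $X$ and $Y$ over $D$. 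Concretely, I would first establish that for $P$ a bounded below $C$-$D$-bicomodule and $Q$ a bounded below $D$-$C$-bicomodule,
\[
(P \square_D Q) \square_{C^e} C \;\simeq\; (Q \square_C P) \square_{D^e} D,
\]
both sides being identified with the totalization of a ``cyclic'' two-sided cobar construction interleaving $P$ and $Q$. This is the crux of the argument; everything else is formal.

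To prove that interchange equivalence I would write out both sides as totalizations of explicit cosimplicial $R$-modules. The left-hand side, after unwinding $\square_{C^e}$ and $\square_D$ via Proposition \ref{prop:suqare in LcMod}, is the totalization of a bicosimplicial object whose $(k,\ell)$-term is $P \otimes D^{\otimes k} \otimes Q \otimes C^{\otimes \ell}$ with coface maps given by the coactions; the right-hand side is the totalization of the bicosimplicial object with $(k,\ell)$-term $Q \otimes C^{\otimes \ell} \otimes P \otimes D^{\otimes k}$. Because $\Delta$ is cosifted and all the relevant cosimplicial objects are uniformly connective after a finite suspension (Remark \ref{bounded} reduces us to the connective case, where Lemma \ref{connective} and Lemma \ref{com} let us commute the iterated totalizations past the tensor factors exactly as in the proof of Theorem \ref{cot and cothh} and Lemma \ref{i=1,2}), both double totalizations collapse to the totalization of the common ``diagonal'' cyclic cobar object, which is manifestly symmetric in the roles of $(C,P)$ and $(D,Q)$. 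The connectivity bookkeeping here is the only subtle point and is handled precisely by the lemmas already in Section \ref{sec: prop of cotensor}; this is the step I expect to be the main obstacle, since one must check that every limit-interchange used is justified by a connectivity estimate rather than by a naive appeal to the tensor product preserving limits.

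Granting the interchange equivalence, the theorem follows in three lines. Using the Morita--Takeuchi data $P \square_D Q \simeq C$ as $C$-bicomodules and $Q \square_C P \simeq D$ as $D$-bicomodules (Lemma \ref{lem:morita}), together with Lemma \ref{ass}, we compute
\[
\mathrm{coTHH}^R(C) \;\simeq\; C \square_{C^e} C \;\simeq\; (P \square_D Q) \square_{C^e} C \;\simeq\; (Q \square_C P) \square_{D^e} D \;\simeq\; D \square_{D^e} D \;\simeq\; \mathrm{coTHH}^R(D),
\]
where the first and last equivalences are Theorem \ref{th1}, the second and fourth use the Morita--Takeuchi isomorphisms, and the middle one is the interchange equivalence established above. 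Finally I would remark that all objects appearing are bounded below so that Remark \ref{bounded} applies throughout and no hypothesis beyond $C,D \in \mathrm{CoAlg}(\mathrm{Mod}_R)_{R/}^{\geq 2}$ and the boundedness of $P,Q$ is needed.
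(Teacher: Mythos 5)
Your proposal follows essentially the same route as the paper: the dual Dennis--Waldhausen argument reducing to the interchange $\mathrm{coTHH}^R(P\square_D Q;C)\simeq\mathrm{coTHH}^R(Q\square_C P;D)$ via the diagonal of a bicosimplicial cyclic cobar object, with cosiftedness of $\Delta$ together with Lemmas \ref{com} and \ref{connective} justifying the limit interchanges, and then concluding by the Morita--Takeuchi identifications $P\square_D Q\simeq C$ and $Q\square_C P\simeq D$. The only point where the paper does more work than your sketch is in making the ``manifest symmetry'' of the diagonal object precise: it constructs an explicit equivalence of cosimplicial objects by lifting the cyclic permutations $\tau_n\colon\langle 2n\rangle\to\langle 2n\rangle$ along a coCartesian fibration over $\mathrm{Act}(\mathrm{N}(\mathrm{Fin}_*)_{\mathrm{even}})$, rather than reading the symmetry off term-by-term.
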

\begin{proof}
Choose bounded below bicomodules $P$ and $Q$ in the last Lemma, we may assume that they are connective by Remark \ref{bounded}. Recall that $P\square_D Q$ can be identified with the totalization of the functor
\[\Omega^{\bullet}_R(P,D,Q):\Delta\to \mathrm{Env}(\mathbf{Assoc}_{\mathrm{RL}})^{\mathrm{op}}\to \mathrm{Env}(\mathrm{Mod}_R^{\mathrm{op}}) ^{\mathrm{op}}\xrightarrow{\otimes} \mathrm{Mod}_R,\]
see \cite[\S 3.2]{AF15} for example. Since $P$ and $Q$ are left and right $C$-comodules respectively, we see that $\Omega^{\bullet}_R(P,D,Q)$ is a $\underline{C}$-bicomodule in the $\infty$-category $\mathrm{Fun}(\Delta,\mathrm{Mod}_R)$ with the pointwise tensor product, where the coalgebra $\underline{C}$ is the constant functor with value $C$. Hence, taking the coTHH of $\underline{C}$ with coefficient $\Omega^{\bullet}_R(P,C,Q)$ in the $\infty$-category $\mathrm{Fun}(\Delta,\mathrm{Mod}_R)$, we obtain a functor:
\[\mathrm{coTHH}^{\bullet}(\Omega^{\bullet}_R(P,D,Q);\underline{C}):\Delta \times \Delta \to \mathrm{Mod}_R.\]
Composing it with the diagonal map, we denote the resulting functor by $F$. Unwinding the definitions and using the equivalence $\mathrm{Env}(\mathcal{C})=\mathcal{C}^{\otimes}_{\mathrm{act}} $, we see that $F$ factors through $\Delta\to ((\mathrm{Mod}_R^{\mathrm{op}})_{\mathrm{act,even}}^{\otimes})^{\mathrm{op}}\to \mathrm{Mod}_R$, where $(\mathrm{Mod}_R^{\mathrm{op}})_{\mathrm{act,even}}^{\otimes}$ is defined to be the pullback of the following diagram
\begin{equation*}
	    \begin{tikzcd}
(\mathrm{Mod}_R^{\mathrm{op}})_{\mathrm{act,even}}^{\otimes} \arrow[r] \arrow[d, "p"] \arrow[dr,phantom,"\lrcorner", very near start] & (\mathrm{Mod}_R^{\mathrm{op}})_{\mathrm{act}}^{\otimes} \arrow[d] \\
\mathrm{Act}(\mathrm{N}(\mathrm{Fin}_*)_{\mathrm{even}}) \arrow[r] & \mathrm{Act}(\mathrm{N}(\mathrm{Fin}_*))
\end{tikzcd}
\end{equation*}
where $\mathrm{N}(\mathrm{Fin}_*)_{\mathrm{even}}$ is the full subcategory of $\mathrm{N}(\mathrm{Fin}_*)$ generated by the objects $\langle 2n \rangle$ for $n\geq 0$. Now, since $p$ is coCartesian, so is $p^{\Delta}: \mathrm{Fun}(\Delta,(\mathrm{Mod}_R^{\mathrm{op}})_{\mathrm{act,even}}^{\otimes})\to \mathrm{Fun}(\Delta,\mathrm{Act}(\mathrm{N}(\mathrm{Fin}_*)_{\mathrm{even}}))$. Therefore, the sequence of maps $\tau_n:\langle 2n \rangle \to \langle 2n \rangle$
    \begin{equation*}
          \begin{split}
          \tau_n(i)= \left \{
          \begin{array}{ll}
           i+n,     & \mathrm{if}\ 1\leq i\leq n,\\
           i-n,     & \mathrm{if}\ n+1\leq i\leq 2n,
          \end{array}
          \right.
          \end{split}
    \end{equation*}
determines another functor $F^{\prime} :\Delta\to ((\mathrm{Mod}_R^{\mathrm{op}})_{\mathrm{act,even}}^{\otimes})^{\mathrm{op}}\to \mathrm{Mod}_R$ which is equivalent to $F$ by \cite[Proposition 2.4.1.5]{HTT}. It follows that 
\[\varprojlim_{[i]\in \Delta} P\otimes D^{\otimes i}\otimes Q\otimes C^{\otimes i} \simeq \varprojlim_{[i]\in \Delta} F \simeq \varprojlim_{[i]\in \Delta} F^{\prime}.\]
A similarly argument for $\mathrm{coTHH}^{\bullet}(\Omega^{\bullet}_R(Q,C,P);\underline{D})$ shows that
\[\varprojlim_{[j]\in \Delta} Q\otimes C^{\otimes j}\otimes P\otimes D^{\otimes j}\simeq \varprojlim_{[j]\in \Delta} F^{\prime\prime},\]
where $F^{\prime\prime}$ can be identified with the opposite of the cosimplicial object $F^{\prime}$.
Hence, 
\[\varprojlim_{[i]\in \Delta} P\otimes D^{\otimes i}\otimes Q\otimes C^{\otimes i} \simeq \varprojlim_{[j]\in \Delta} Q\otimes C^{\otimes j}\otimes P\otimes D^{\otimes j}.\]
Now, combining Lemma \ref{com} and \ref{connective} with the fact that $\Delta$ is cosifted, and using the above equivalence, a direct computation yields
\[\mathrm{coTHH}^R(P\square_DQ;C) \simeq \mathrm{coTHH}^R(Q\square_C P;D).\]
Finally, by Lemma \ref{lem:morita}, we have $P\square_DQ\simeq C$ and $D\simeq Q\square_C P$, which implies the desired result.
\end{proof}

As an other application of the associativity of cotensor product, we establish the following Proposition for coTHH with coefficients, which is dual to results in \cite[\S\,2]{AHL10}. These results have played an important role in computations of THH (see, e.g., \cite{HW22,Lee22}), and we expect this dual result to also be useful for coTHH computations.
 
\begin{prop}
\begin{enumerate}
	\item Let $C\in \mathrm{CoAlg}_{\mathbb{E}_2}(\mathrm{Mod}_R)_{R/}^{\geq 2}$ and $M$ be a connective right $C$-comodule. Then $M$ admits a $C$-bicomodule structure, and we have the following equivalence:
   	\[\mathrm{coTHH}^R(M;C)\simeq M\square_C \mathrm{coTHH}^R(C).\]
   	\item Given a map $D\to C$ of coaugmented coalgebras with $2$-connective coaugmentation coideals and $M$ a connective $C$-$D$-bicomodule. We have the following equivalence:
   	\[\mathrm{coTHH}^R(M;C)\simeq M\square_{D\otimes C^{\mathrm{op}}} D.\]
\end{enumerate}
\end{prop}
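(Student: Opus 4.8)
\emph{Part (1).} The first assertion and the formula are best thought of as the exact dual of the proof of $\mathrm{THH}(M;A)\simeq M\otimes_A\mathrm{THH}(A)$ for an $\mathbb{E}_2$-algebra $A$. Since $C$ is an $\mathbb{E}_2$-coalgebra, its comultiplication together with the right coaction of $M$ assemble into a $C$-bicomodule structure on $M$ (dual to the fact that a left module over an $\mathbb{E}_2$-algebra is canonically a bimodule, the $\mathbb{E}_2$-coherence being precisely what promotes the $A^{\mathrm{op}}$-action coming from the symmetry to a genuine second $A$-action; cf.\ \cite[\S 5.1]{HA}), and the same mechanism makes $\mathrm{coTHH}^R(C)\simeq C\,\square_{C^e}\,C$ into a left $C$-comodule. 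Granting this, one has $C^e=C\otimes C^{\mathrm{op}}\in\mathrm{CoAlg}(\mathrm{Mod}_R)_{R/}^{\geq 2}$ (as noted in the proof of Proposition~\ref{prop:S^1}), while $M$, $C$ and $\mathrm{coTHH}^R(C)$ are connective (the last by Lemma~\ref{connective}), so Lemma~\ref{ass} applies and, together with $M\,\square_C\,C\simeq M$ (Lemma~\ref{lem: sq C}), yields
\[
M\,\square_C\,\mathrm{coTHH}^R(C)\ \simeq\ (M\,\square_C\,C)\,\square_{C^e}\,C\ \simeq\ M\,\square_{C^e}\,C\ \simeq\ \mathrm{coTHH}^R(M;C),
\]
the last equivalence being Theorem~\ref{th1}. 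The only real work is checking that the $\mathbb{E}_2$-data supplies all the coactions occurring in Lemma~\ref{ass} compatibly; this is bookkeeping, entirely parallel to the algebra case.

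\emph{Part (2).} Here the argument follows the pattern of the proof of Theorem~\ref{cot and cothh}, with the enveloping coalgebra $C^e$ replaced by $D\otimes C^{\mathrm{op}}$ and the trivial splitting $C\simeq C\,\square_C\,C$ replaced by $D\simeq D\,\square_C\,C$. Given $\phi\colon D\to C$, corestriction of scalars along $\phi$ (the left adjoint of the corollary following Proposition~\ref{prop:suqare in LcMod}) turns the right $D$-coaction of $M$ into a right $C$-coaction, so $M$ becomes a $C$-bicomodule and Theorem~\ref{th1} gives $\mathrm{coTHH}^R(M;C)\simeq M\,\square_{C^e}\,C\simeq\varprojlim_{[i]\in\Delta}(M\otimes C^{\otimes i})$; dually, $M$ is a right $(D\otimes C^{\mathrm{op}})$-comodule and $D$, with its intrinsic left $D$-coaction and the right $C$-coaction $D\xrightarrow{\Delta_D}D\otimes D\xrightarrow{\mathrm{id}\otimes\phi}D\otimes C$, is a left $(D\otimes C^{\mathrm{op}})$-comodule, so $M\,\square_{D\otimes C^{\mathrm{op}}}\,D$ is defined. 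I would then substitute the split resolution $D\simeq D\,\square_C\,C=\varprojlim_{[k]\in\Delta}\Omega_R^{\bullet}(D,C,C)$ (Lemma~\ref{lem: sq C}; this cosimplicial object splits via the right $C$-coaction of $D$) into $M\,\square_{D\otimes C^{\mathrm{op}}}\,D=\varprojlim_{[j]\in\Delta}\Omega_R^{j}(M,D\otimes C^{\mathrm{op}},D)$, interchange the two limits, use that tensoring with any object preserves split totalizations (as in the proof of Lemma~\ref{lem: sq C}) to pull the $[k]$-limit through the $[j]$-direction, re-associate the tensor factors by symmetric monoidality, and collapse the $[j]$-direction using the splitting of $\Omega_R^{\bullet}(M,D\otimes C^{\mathrm{op}},D\otimes C^{\mathrm{op}})$; this is the same chain of steps as in the proof of Theorem~\ref{cot and cothh}, and its outcome is $\varprojlim_{[k]\in\Delta}(M\otimes C^{\otimes k})=\mathrm{coTHH}^R(M;C)$. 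The connectivity assumptions on $C$, $D$ and $M$, together with Lemmas~\ref{com} and~\ref{connective} and Proposition~\ref{prop:suqare in LcMod}, are exactly what make these cotensor products well defined and justify the limit interchange (and, if desired, identify the equivalence with one of comodules).

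The step I expect to be the real obstacle, in both parts, is not homotopy-theoretic — everything of that kind is already packaged in Lemma~\ref{com}, Lemma~\ref{connective}, Lemma~\ref{ass} and the proof of Theorem~\ref{cot and cothh} — but the careful tracking of comodule and bicomodule structures through the identifications: in part (1), the $\mathbb{E}_2$-coherence making $M$ a bicomodule and $\mathrm{coTHH}^R(C)$ a $C$-comodule in a way compatible with the hypotheses of Lemma~\ref{ass}; in part (2), making the right $C$-coaction on $D$ induced by $\phi$ thread correctly through the limit interchange and the re-association of tensor factors. Each individual step becomes routine once it is set up, so the attention must go into setting them up.
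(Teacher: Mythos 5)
Your proposal is correct, and part (1) is essentially the paper's own argument: corestrict along the coalgebra map $C\to C\otimes C^{\mathrm{op}}$ supplied by the $\mathbb{E}_2$-structure to make $M$ a $C$-bicomodule, identify $\mathrm{coTHH}^R(C)\simeq C\,\square_{C^e}C$ as a left $C$-comodule, and then run the chain $M\,\square_C(C\,\square_{C^e}C)\simeq (M\,\square_C C)\,\square_{C^e}C\simeq M\,\square_{C^e}C$ via Lemma \ref{ass}, Lemma \ref{lem: sq C} and Theorem \ref{th1}; both you and the paper leave the same bicomodule-structure bookkeeping implicit. For part (2) you take a mildly different route: the paper stays at the level of cotensor products, writing $M\simeq M\,\square_{D\otimes C^{\mathrm{op}}}(D\otimes C^{\mathrm{op}})$, applying associativity (Lemma \ref{ass}, backed by Proposition \ref{prop:suqare in LcMod}), and then collapsing $(D\otimes C^{\mathrm{op}})\,\square_{C\otimes C^{\mathrm{op}}}C\simeq D$ via the splitting of $\Omega_R^{\bullet}(D\otimes C^{\mathrm{op}},C\otimes C^{\mathrm{op}},C)$, whereas you expand $M\,\square_{D\otimes C^{\mathrm{op}}}D$ directly into a double cobar construction, substitute the split resolution $D\simeq\mathrm{Tot}\,\Omega_R^{\bullet}(D,C,C)$, and finish by Fubini and split-totalization arguments exactly as in the proof of Theorem \ref{cot and cothh}. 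The two computations are really the same one packaged differently (Lemma \ref{ass} is itself proved by such a limit interchange), but your version has the minor advantage of using only splittings and Fubini — so the underlying equivalence in $\mathrm{Mod}_R$ needs no connectivity hypotheses, which enter only when one wants the comodule structures and the identification $\mathrm{coTHH}^R(M;C)\simeq M\,\square_{C^e}C$ with $M$ corestricted along $D\to C$ — at the cost of redoing by hand the reshuffling of tensor factors that the associativity lemma packages.
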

\begin{proof}
	Since $C$ be an $\mathbb{E}_2$-coalgebra, we obtain a morphism of coalgebras $C\to C\otimes C^{op}$ which endows $M$ with a $C$-bicomodule structure. Moreover, we can deduce from Proposition \ref{Ek} that $C\square_{C\otimes C^{\mathrm{op}}} C$ is a left $C$-comodule and this structure is exactly the one appearing in the proof of Lemma \ref{ass}. Hence, there is a chain of weak equivalences:
	\[M\square_{C\otimes C^{\mathrm{op}}} C\simeq (M\square_C C)\square_{C\otimes C^{\mathrm{op}}} C\simeq M\square_C (C\square_{C\otimes C^{\mathrm{op}}} C) .\]
	Now, the first half of the proposition is a consequence of Theorem \ref{th1}. 
	
	Similarly, considering $D$ as a right $C$-comodule, we have
	\[M\square_{C\otimes C^{\mathrm{op}}} C\simeq [M\square_{D\otimes C^{\mathrm{op}}} (D\otimes C^{\mathrm{op}})]\square_{C\otimes C^{\mathrm{op}}} C.\]
	Since the cosimplicial object $\Omega_R^{\bullet}(D\otimes C^{\mathrm{op}},C\otimes C^{\mathrm{op}}, C)$ splits with the coaugmentation given by $D\to D\otimes (C^{\mathrm{op}}\otimes C)$, Proposition \ref{prop:suqare in LcMod} and Lemma \ref{ass} shows that the right-hand side is equivalent to
	\[M\square_{D\otimes C^{\mathrm{op}}} [(D\otimes C^{\mathrm{op}})\square_{C\otimes C^{\mathrm{op}}} C]\simeq M\square_{D\otimes C^{\mathrm{op}}} D.\]
	 Now, the result follows from Theorem \ref{th1}.
\end{proof}

Finally, we use Proposition \ref{prop:suqare in LcMod} to give a generalization of \cite[Proposition 5.6]{BGS22} which states that for a simply connected space $X$, $\mathrm{coTHH}^{\mathbb{S}}(\Sigma^{\infty}_+X)$ is a Hopf algebra in the homotopy category of $\mathrm{CoAlg}(\mathrm{Sp})_{/\Sigma^{\infty}_+X}$. This can be regarded as the dual of \cite[Corollary IX.3.6]{EKMM}. Let $C$ be an object of $\mathrm{CoCAlg}(\mathrm{Mod}_R)_{R/}^{\geq 2}$, by Corollary \ref{cor:Car}, the $\infty$-category $\mathrm{CoCAlg}(\mathrm{Mod}_R^{cn})_{/C}$ admits a Cartesian symmetric monoidal structure. In particular, every object is a cocommutative coalgebra object of $\mathrm{CoCAlg}(\mathrm{Mod}_R^{cn})_{/C}$ by \cite[Corollary 2.4.3.10]{HA}.

\begin{prop}\label{prop:hopf}
	Let $C$ be an object of $\mathrm{CoCAlg}(\mathrm{Mod}_R)_{R/}^{\geq 2}$. The coalgebra $\mathrm{coTHH}^R(C)$ is a commutative Hopf algebra in the $\infty$-category $\mathrm{CoCAlg}(\mathrm{Mod}_R^{cn})_{/C}$ equipped with Cartesian symmetric monoidal structure of Corollary \ref{cor:Car}.
\end{prop}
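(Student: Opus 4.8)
Write $\mathcal{D}:=\mathrm{CoCAlg}(\mathrm{Mod}_R^{cn})_{/C}$, a Cartesian symmetric monoidal $\infty$-category with monoidal product $\square_{C}$ and terminal object $C\xrightarrow{=}C$ by Corollary~\ref{cor:Car}. The plan is to produce a group object of $\mathcal{D}$ with underlying object $\mathrm{coTHH}^R(C)\to C$ by transporting the cogroup structure of the circle through a cotensor functor; a group object of a Cartesian symmetric monoidal $\infty$-category is exactly a Hopf algebra object (its comultiplication being the, necessarily cocommutative, diagonal), so this suffices. By Proposition~\ref{prop:S^1} and its proof, $\mathrm{coTHH}^R(C)$ is equivalent, as a cocommutative $R$-coalgebra lying over $C$ via the canonical map, to the cotensor $C^{S^{1}}=\lim_{S^{1}}\underline{C}$ in $\mathrm{CoCAlg}(\mathrm{Mod}_R)$, with structure map $C^{S^{1}}\to C$ induced by the basepoint; it is connective by Lemma~\ref{connective}, hence is a genuine object of $\mathcal{D}$.

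For a finite pointed space $K$, set $C^{K}:=\lim_{|K|}\underline{C}\in\mathrm{CoCAlg}(\mathrm{Mod}_R)$ with the map $C^{K}\to C^{\ast}=C$ induced by the basepoint. As a limit over the underlying space, $K\mapsto C^{K}$ is contravariant and sends colimits of spaces to limits of coalgebras; in particular it sends the point to $C\xrightarrow{=}C$ and the wedge $K\vee L=K\sqcup_{\ast}L$ to the pullback $C^{K}\times_{C}C^{L}$ in $\mathrm{CoCAlg}(\mathrm{Mod}_R)$. Because $\overline{C}$ is $2$-connective, iterating Proposition~\ref{pullback} together with Lemma~\ref{connective} shows that for every finite wedge of circles $K$ the coalgebra $C^{K}$ is connective and that the pullback $C^{K}\times_{C}C^{L}$ coincides with $C^{K}\square_{C}C^{L}$, which by Corollary~\ref{cor:Car} is the product in $\mathcal{D}$. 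Restricting attention to the finite wedges $\bigvee_{n}S^{1}$ we thus obtain a contravariant functor $G$ into $\mathcal{D}$ that carries finite wedges to $\square_{C}$-products, with $G(\bigvee_{n}S^{1})\simeq\mathrm{coTHH}^R(C)^{\square_{C}n}$ and in particular $G(S^{1})\simeq\bigl(\mathrm{coTHH}^R(C)\to C\bigr)$.

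The cogroup structure of the circle in pointed spaces---comultiplication the pinch map $S^{1}\to S^{1}\vee S^{1}$, counit the collapse $S^{1}\to\ast$, coinverse the orientation reversal, with the cogroup identities holding up to coherent homotopy since $S^{1}=\Sigma S^{0}$ is a suspension---assembles into a cosimplicial pointed space $[n]\mapsto\bigvee_{n}S^{1}$. Applying $G$ turns this into a simplicial object $\Delta^{\mathrm{op}}\to\mathcal{D}$; since $G$ carries finite wedges to $\square_{C}$-products, the Segal and group conditions are preserved, so this simplicial object is a group object of the Cartesian symmetric monoidal $\infty$-category $\mathcal{D}$, with underlying object $\mathrm{coTHH}^R(C)\to C$ and multiplication the image under $G$ of the pinch map. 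By the first paragraph this is the desired Hopf algebra structure, and since moreover every object of $\mathcal{D}\subseteq\mathrm{CoCAlg}(\mathrm{Mod}_R)$ is cocommutative, $\mathrm{coTHH}^R(C)$ is a (co)commutative Hopf algebra in $\mathcal{D}$. I expect the main obstacle to be the well-definedness of $G$ into the \emph{connective} slice $\mathcal{D}$---controlling the connectivity of the iterated cotensors $C^{\bigvee_{n}S^{1}}$ and matching the pullbacks formed in $\mathrm{CoCAlg}(\mathrm{Mod}_R)$ with $\square_{C}$---which is precisely what Lemma~\ref{connective}, Proposition~\ref{pullback} and Corollary~\ref{cor:Car} are for, given the $2$-connectivity of $\overline{C}$; once $G$ is a finite-wedge-preserving functor of $\infty$-categories, the coherences of the transported structure are automatic.
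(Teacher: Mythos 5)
Your proof follows essentially the same route as the paper's: both identify $\mathrm{coTHH}^R(C)$ with the cotensor $C^{S^1}$, extend $X\mapsto C^X$ to a product-preserving contravariant functor on finite wedges of circles landing in $\mathrm{CoCAlg}(\mathrm{Mod}_R^{cn})_{/C}$ by combining Proposition \ref{pullback}, Lemma \ref{connective} and Corollary \ref{cor:Car}, and then transport the cogroup/Hopf structure of $S^1$. The only difference is packaging: the paper states the last step as ``the symmetric monoidal functor out of the category of wedges of circles sends the Hopf algebra $S^1$ to a Hopf algebra,'' whereas you encode the same cogroup structure as a Segal-type cosimplicial object $[n]\mapsto\bigvee_n S^1$ and push it through; both are correct.
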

\begin{proof}
	Let $\mathcal{F}$ be the full subcategory of $\mathcal{S}_*$ spanned by the $n$-flood wedge sum of $S^1$ for $n\geq 0$. This $\infty$-category admits finite coproducts, hence $\mathcal{F}^{op}$ admits a Cartesian symmetric monoidal structure. Consider the functor
	\[C^-: \mathcal{F}^{op}\to \mathrm{CoCAlg}(\mathrm{Mod}_R), \ \ X\mapsto C^X.\]
	Since, as a functor, cotensor of $C$ with spaces preserves products, the functor $C^-$ is completely determined by $C^*$ and $C^{S^1}$. Clearly, $C^*=C$, while $C^{S^1}=\mathrm{coTHH}^R(C)$ by Proposition \ref{prop:S^1}. Furthermore, the canonical map of cocommutative $R$-coalgebras $\mathrm{coTHH}^R(C)\to C$ is induced by the pointed map $*\to S^1$. By lemma \ref{connective}, $\mathrm{coTHH}^R(C)$ is connective. Hence Proposition \ref{pullback} implies that there is an equivalence of cocommutative coalgebras
	\[C^{S^1\vee S^1}\simeq \mathrm{coTHH}^R(C)\square_C \mathrm{coTHH}^R(C)\]
	which is also connective by Lemma \ref{connective}. Using Lemma \ref{connective} and Propsition \ref{pullback} repeatedly, we obtain an equivalence of connective cocommutative coalgebras over $C$
	\[\underbrace{\mathrm{coTHH}^R(C)\square_C \cdots \square_C \mathrm{coTHH}^R(C)}_{n}\simeq \underbrace{C^{S^1}\square_C \cdots \square_C C^{S^1}}_{n}\simeq C^{\bigvee_{i=1}^n S^1}\]
	for any $n\geq 0$. Consequently, the functor $C^-$ factors through $\mathrm{CoCAlg}(\mathrm{Mod}_R^{cn})_{/C}$. By Corollary \ref{cor:Car} and the argument above, the resulting functor $\mathcal{F}^{op} \to \mathrm{CoCAlg}(\mathrm{Mod}_R^{cn})_{/C}$ is symmetric monoidal when both are equipped with the Cartesian symmetric monoidal structure. Now, the result follows from the fact that $S^1$ is a commutative Hopf algebra in the $\infty$-category $\mathcal{F}^{op}$.
\end{proof}

\begin{rem}
	When $C=\Sigma^{\infty}_+X$ with $X$ simply connected. The commutative Hopf algebra structure of $\mathrm{coTHH}^{\mathbb{S}}(\Sigma^{\infty}_+X)$ in Proposition \ref{prop:hopf} coincides with that in \cite[Proposition 5.6]{BGS22}. Furthermore, it follows from Corollary \ref{cor:Car} that $\mathrm{coTHH}(\Sigma^{\infty}_+X)\square_{\Sigma^{\infty}_+X}\mathrm{coTHH}(\Sigma^{\infty}_+X)$ in \cite[Proposition 5.6]{BGS22} can be identified with the cotensor product of two copies of $\mathrm{coTHH}^{\mathbb{S}}(\Sigma^{\infty}_+X)$ over $\Sigma^{\infty}_+X$.
\end{rem}

As an application, we have the following coB\"okstedt spectral sequence.
\begin{cor}
	Let $k$ be a field and $C$ be a cocommutative $k$-coalgebra. There is a coB\"okstedt spectral sequence converging to $\pi_{t-s}(\mathrm{coTHH}^k(C))$ with $E_2$-page
	\[E_2^{s,t}(C)=\mathrm{coHH}^k_{s+t}(C_*),\]
	which admits a comultiplication $E_r^{s,t}\to E_r^{s,t}\square_{C_*}E_r^{s,t}.$
\end{cor}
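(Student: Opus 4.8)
The plan is to extract the coB\"okstedt spectral sequence from the Tot-tower computing $\mathrm{coTHH}^{k}(C)$, to identify its $E_{2}$-page by a K\"unneth argument over $k$, and to obtain the comultiplication from the cocommutative $C$-coalgebra structure on $\mathrm{coTHH}^{k}(C)$; throughout I assume whatever connectivity is needed for the results of Section~\ref{sec: prop of cotensor} to apply to $C$ (for instance $C\in\mathrm{CoCAlg}(\mathrm{Mod}_{k})_{k/}^{\geq 2}$, or merely $C$ bounded below, via Remark~\ref{bounded}). First I would recall that $\mathrm{coTHH}^{k}(C)\simeq\mathrm{Tot}(\Omega^{\bullet})$, where $\Omega^{\bullet}$ is the cyclic cobar construction $[s]\mapsto C^{\otimes(s+1)}$ on the $k$-coalgebra $C$ (by Theorem~\ref{th1} this is also $C\,\square_{C^{e}}\,C$). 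Its Tot-tower $\cdots\to\mathrm{Tot}^{n+1}(\Omega^{\bullet})\to\mathrm{Tot}^{n}(\Omega^{\bullet})\to\cdots$ yields the homotopy spectral sequence of a cosimplicial spectrum, with $E_{1}^{s,t}=\pi_{t}(C^{\otimes(s+1)})$, $d_{1}$ the alternating sum of the coface maps, and abutment $\pi_{t-s}(\mathrm{coTHH}^{k}(C))$. A connectivity estimate on the normalized cyclic cobar --- equivalently, Lemma~\ref{connective} combined with the identification of Theorem~\ref{th1} --- shows that the fiber of $\mathrm{Tot}^{n+1}\to\mathrm{Tot}^{n}$ is $n$-connective up to a fixed shift, so the tower is eventually constant in each homotopy degree and the spectral sequence converges strongly.

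Next I would identify the $E_{2}$-page. Since $k$ is a field, every $k$-module is flat, so the symmetric monoidal K\"unneth map $C_{*}^{\otimes_{k_{*}}(s+1)}\xrightarrow{\sim}\pi_{*}(C^{\otimes(s+1)})$ is an isomorphism, compatibly with the cosimplicial structure maps; hence the $E_{1}$-complex $(E_{1}^{\bullet,t},d_{1})$ is precisely the internal-degree-$t$ part of the cyclic cobar complex of the graded coalgebra $C_{*}$, whose cohomology is by definition $\mathrm{coHH}^{k}(C_{*})$. This gives $E_{2}^{s,t}(C)\cong\mathrm{coHH}^{k}_{s+t}(C_{*})$ with the grading conventions of the statement. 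Moreover, because $C$ is cocommutative, the cyclic cobar $\Omega^{\bullet}$ is a cosimplicial cocommutative $C$-coalgebra, so the whole spectral sequence is one of $C_{*}$-comodules and $\mathrm{coHH}^{k}(C_{*})$ carries its standard $C_{*}$-bicomodule structure.

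For the comultiplication I would argue that the coskeletal Tot-filtration of $\Omega^{\bullet}$ is naturally a cofiltered cocommutative $C$-coalgebra, dual to the fact that the skeletal filtration of the cyclic bar of a commutative ring is a filtered commutative algebra. Concretely: by Corollary~\ref{cor:Car} (equivalently Proposition~\ref{prop:hopf}), $\mathrm{coTHH}^{k}(C)$ is a cocommutative coalgebra object of $\mathrm{CoCAlg}(\mathrm{Mod}_{k}^{\mathrm{cn}})_{/C}$ with its Cartesian structure, so it carries a coassociative cocommutative comultiplication $\mathrm{coTHH}^{k}(C)\to\mathrm{coTHH}^{k}(C)\,\square_{C}\,\mathrm{coTHH}^{k}(C)$; and by Lemma~\ref{com} and Lemma~\ref{i=1,2} the cotensor product is compatible with the Tot-filtrations in the required (Day-convolution) sense. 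Passing to associated spectral sequences then produces a natural, coassociative, cocommutative map $E_{r}^{s,t}\to(E_{r}\,\square_{C_{*}}E_{r})^{s,t}$ commuting with $d_{r}$, where $E_{r}\,\square_{C_{*}}E_{r}$ is formed degreewise as the equalizer of the two $C_{*}$-coactions inside $E_{r}\otimes_{k}E_{r}$; that the image of the external comultiplication lands in this equalizer is the shadow of the fact that the spectrum-level comultiplication factors through $\mathrm{coTHH}^{k}(C)\,\square_{C}\,\mathrm{coTHH}^{k}(C)$.

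The main obstacle is this last step: one must commute the cotensor product $\square_{C}$ --- which is a limit, not a colimit --- past the Tot-filtrations and past $\otimes_{k}$ so that the $C$-comultiplication becomes a filtered map, and this is exactly what the connectivity hypotheses (Lemmas~\ref{com} and~\ref{connective}, together with the monoidality of Theorem~\ref{th: sym}) are designed to make possible; absent them, the filtration on the cotensor square need not relate to that on $\mathrm{coTHH}^{k}(C)$ at all. A secondary subtlety is that $\square_{C_{*}}$ is only left exact, so $E_{r}\,\square_{C_{*}}E_{r}$ need not be the $E_{r}$-page of $\mathrm{coTHH}^{k}(C)\,\square_{C}\,\mathrm{coTHH}^{k}(C)$; this is sidestepped by constructing the natural map into the degreewise algebraic cotensor product rather than claiming it computes a page.
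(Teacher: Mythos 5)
Your proposal is correct and follows essentially the same route as the paper: the Bousfield--Kan/Tot-tower spectral sequence of the cyclic cobar construction with the $E_2$-identification via flatness over the field $k$, and the comultiplication obtained from Proposition \ref{prop:hopf} by mapping the (filtration-preserving) spectral sequence of $\mathrm{coTHH}^k(C)\,\square_C\,\mathrm{coTHH}^k(C)$ into the degreewise algebraic cotensor $E_r^{s,t}\square_{C_*}E_r^{s,t}$ rather than identifying the two. The paper's proof likewise restricts to $C\in\mathrm{CoCAlg}(\mathrm{Mod}_k)_{k/}^{\geq 2}$ for the comultiplication, so the connectivity caveat you flag matches what the paper actually does.
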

\begin{proof}
	This first statement is a special case of the Bousfield-Kan spectral sequence, and the second is a special case of \cite[Theorems 6.10]{BGS22}, we give a simpler proof by using Proposition \ref{prop:hopf}. When $C \in \mathrm{CoCAlg}(\mathrm{Mod}_k)_{k/}^{\geq 2}$, Proposition \ref{prop:hopf} implies that there is a cone in the category of graded $k$-modules
	\[\pi_*(\mathrm{coTHH}^k(C)\square_C\mathrm{coTHH}^k(C))\to \Omega_k^{\bullet}(\pi_*(\mathrm{coTHH}^k(C)),{C_*},\pi_*(\mathrm{coTHH}^k(C)))\]
	which preserves Bousfield-Kan filtration. Therefore, it induces a map of spectral sequences
	\begin{equation*}
		^{\prime}E_r^{s,t}\to E_r^{s,t}\square_{C_*}E_r^{s,t},
	\end{equation*}
	since every module over $k$ is flat. Now, the map $\mathrm{coTHH}^k(C)\to \mathrm{coTHH}^k(C)\square_C\mathrm{coTHH}^k(C)$ induces the comultiplication
	\[\Delta:E_r^{s,t}\to {^{\prime}E_r^{s,t}}\to E_r^{s,t}\square_{C_*}E_r^{s,t}.\]
\end{proof}

\begin{rem}
	Furthermore, if $C$ is an object of $\mathrm{CoCAlg}(\mathrm{Mod}_k)_{k/}^{\geq 2}$ and  $E_r^{s,t}(C)$ is coflat over $C_*$ for any $r\geq 2$, then this spectral sequence admits a $\square_{C_*}$-Hopf algebra structure by \cite[Theorem 6.14]{BGS22}.
\end{rem}

\bibliographystyle{plain}
\bibliography{ref}
	
\end{document}